\documentclass[a4paper,reqno, 12pt]{amsart}
\author{Julia Brandes}
\title[Rational lines on hypersurfaces]{The density of rational lines on hypersurfaces: \\A bihomogeneous perspective}
\address{Department for Mathematical Sciences, University of Gothenburg and Chalmers University of Technology, 412 96 G\"oteborg, Sweden}
\email{brjulia@chalmers.se}
\subjclass[2010]{11D72, 11P55, 11E76, 14G05}
\keywords{Forms in many variables, Hardy-Littlewood method, rational lines}
\date{\today}

\usepackage[T1]{fontenc}
\usepackage[latin1]{inputenc}
\usepackage{latexsym}
\usepackage[arrow, matrix, curve]{xy}
\usepackage[dvips]{graphicx}
\usepackage{epsfig}
\usepackage{bm}
\usepackage{hhline}
\usepackage{amssymb}
\usepackage{amsmath}
\usepackage{verbatim}

\usepackage{dsfont}
\usepackage{amsthm}
\usepackage{enumerate}
\usepackage{mathrsfs}
\usepackage[hmargin=2.5cm,vmargin=3.5cm]{geometry}

\newtheorem{thm}{Theorem}
\newtheorem{lem}{Lemma}

\newtheorem{prop}[lem]{Proposition}


\numberwithin{equation}{section}
\numberwithin{thm}{section}
\numberwithin{lem}{section}	

\theoremstyle{definition}

\def\A{\mathbb A}
\def\C{\mathbb C}
\def\F{\mathbb F}

\def\Q{\mathbb Q}
\def\R{\mathbb R}

\def\Z{\mathbb Z}

\def\fr#1{\mathfrak{#1}}
\def\cal#1{\mathcal{#1}}
\def\scr#1{\mathscr{#1}}
\def\B#1{\mathbf{#1}}

\def\ba{\bm{\alpha}}
\def\bb{\bm{\beta}}
\def\bg{\bm{\gamma}}

\def\D{\,\mathrm{d}}
\def\dsum#1#2{\sum_{\substack{{#1}\\{#2}}}}

\def\a0{\alpha_0}

\def\eps{\varepsilon}
\def\mmod#1{\;(\mathrm{mod}\;{#1})}

\renewcommand\le{\leqslant}
\renewcommand\ge{\geqslant}

\hyphenation{Diet-mann}
\hyphenation{di-men-sio-nal}



\DeclareMathOperator{\rk}{rank}
\DeclareMathOperator{\card}{Card}

\DeclareMathOperator{\vol}{vol}

\newenvironment{pf}{\begin{proof}[Proof]}{\end{proof}}

\usepackage{hyperref}

\begin{document}

\begin{abstract}
    Let $F$ be a non-singular homogeneous polynomial of degree $d$ in $n$ variables. We give an asymptotic formula of the pairs of integer points $(\B x, \B y)$ with $|\B x| \le X$ and $|\B y| \le Y$ which generate a line lying in the hypersurface defined by $F$, provided that $n > 2^{d-1}d^4(d+1)(d+2)$. In particular, by restricting to Zariski-open subsets we are able to avoid imposing any conditions on the relative sizes of $X$ and $Y$. 
\end{abstract}

\maketitle

\section{Introduction}

Questions concerning the number and distribution of rational points on hypersurfaces have long attracted the interest of both number theorists and algebraic geometers. Building on work by Davenport \cite{dav32}, Birch wrote an influential paper \cite{birch} in which he provided a method to prove the analytic Hasse principle and establish asymptotic formul{\ae} for the number of integer points on projective hypersurfaces under moderate non-singularity conditions, provided that the dimension of the hypersurface is sufficiently large compared to its degree. In particular, suppose that $F \in \Z[x_1, \ldots, x_n]$ is a non-singular form of degree $d$ defining a hypersurface $\cal V$, and write $N(X)$ for the number of points $\B x \in \cal V(\Z)$ with $|x_i| \le X$ for $1 \le i \le n$. In this notation, Birch's main result \cite[Theorem]{birch} states that whenever $n > 2^d (d-1)$, there exists a positive real number $\nu$ with the property that the number of integer points on $\cal V$ satisfies an asymptotic formula of the shape
\begin{align*}
	N(X) = c X^{n-d} + O(X^{n-d-\nu}).
\end{align*}
The constant $c$ is non-negative and has an interpretation in terms of the density of $K_v$-points in $\cal V$ for all completions $K_v$ of $\Q$. 

In the work at hand, we study a higher-dimensional generalisation of Birch's result. Denote by $N(X,Y)$ the number of points $\B x, \B y \in \Z^n \setminus \{\bm 0\}$ satisfying $|x_i| \le X$ and $|y_i| \le Y$ for $1 \le i \le n$, and having the property that 
\begin{align}\label{1.1}
	F(u \B x + v \B y) = 0 \qquad \text{identically in $u$ and $v$.}
\end{align}
This problem is related to that of counting rational lines contained in $\cal V$, in that it counts all possible sets of generating pairs $(\B x, \B y)$ of suitably bounded height and with the property that the line spanned by $(\B x, \B y)$ is fully contained in $\cal V$. Geometrically, it is known that the Fano scheme of lines on a generic hypersurface $\cal V$ of degree $d$ has dimension $2n-d-5$ whenever that number is positive (see e.g. Langer \cite{langer}). When $F$ is a cubic form, recent work of the author jointly with Dietmann \cite{JB-RD} shows that the equation \eqref{1.1} has non-trivial rational solutions whenever $n \ge 29$, but that there may not be any rational solutions when $n=11$ or lower. For more general settings, the equation \eqref{1.1} has been investigated in a series of papers by the present author \cite{FRF,FRF2,FRF3,FRF4}. 
We note at this point that, in order to strictly count lines, we would have to exclude those solutions of \eqref{1.1} where $\B x$ and $\B y$ are proportional. Fortunately, the contribution of such points is of a smaller order of magnitude than our eventual main term, so we do not lose any generality by omitting to explicitly exclude them. 

A special role in problems of this flavour is played by certain points $\B y \in \cal V$ that admit for a disproportionate number of solutions $\B x \in \cal V$ satisfying \eqref{1.1}. Typically, the contribution arising from these solutions is counterbalanced by the relative sparsity of such points $\B y$, but when $Y$ is very small in comparison to $X$, such solutions might well dominate the overall count. It is therefore natural to exclude the solutions that arise from such special subvarieties. When $\cal U \subseteq \cal V$ is a Zariski-open subset, we denote by $N_{\cal U}(X,Y)$ the number of integral $\B x, \B y \in \cal U$ with $|x_i| \le X$ and $|y_i| \le Y$ for $1 \le i \le n$ that satisfy \eqref{1.1}. We can now state the main result of this memoir. 

\begin{thm}\label{T1.1}
	Let $F \in \Z[x_1, \ldots, x_n]$ be a non-singular form of degree $d \ge 5$ defining a hypersurface $\cal V$. Let further 
	\begin{align*}
		n > 2^{d-1}d^4(d+1)(d+2).
	\end{align*}
	Then there exists a Zariski-open subset $\cal U \subseteq \cal V$ and a positive real number $\nu$ with the property that 
	\begin{align*}
		N_{\cal U}(X,Y) = (XY)^{n-\frac12 d(d+1)} \chi_{\infty} \prod_{p \text{ prime}} \chi_p + O((XY)^{n-\frac12d(d+1) - \nu}).
	\end{align*}
	The Euler product converges absolutely, and its factors have an interpretation as the density of solutions of \eqref{1.1} over the local fields $\R$ and $\Q_p$, respectively. 
\end{thm}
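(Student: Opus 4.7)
The first step is to polarise: we have
\[
F(u\B x + v\B y) = \sum_{i=0}^d \binom{d}{i} u^i v^{d-i} F^{(i)}(\B x, \B y),
\]
where each $F^{(i)}$ is bihomogeneous of bidegree $(i, d-i)$ in $(\B x, \B y)$. Since a form of degree $d$ in $(u,v)$ vanishes identically if and only if each of its $d+1$ coefficients vanishes, condition \eqref{1.1} is equivalent to the simultaneous vanishing of $F^{(0)}, \ldots, F^{(d)}$. A dimension count with $2n$ variables against $d+1$ forms, whose bidegrees $(i, d-i)$ sum to $(\tfrac12 d(d+1), \tfrac12 d(d+1))$, predicts a main term of order $(XY)^{n - \frac12 d(d+1)}$, matching the statement of the theorem.

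The natural framework is the bihomogeneous version of the Hardy-Littlewood method developed in the author's earlier work \cite{FRF, FRF2, FRF3, FRF4}: introducing a dual variable $\ba = (\alpha_0, \ldots, \alpha_d) \in \T^{d+1}$, the counting function is
\[
N_{\cal U}(X,Y) = \int_{\T^{d+1}} \sum_{\substack{(\B x, \B y) \in \cal U \\ |x_i| \le X,\, |y_j| \le Y}} e\Bigl(\sum_{i=0}^d \alpha_i F^{(i)}(\B x, \B y)\Bigr)\, \D\ba.
\]
After a standard Farey dissection of $\T^{d+1}$, the major-arc contribution is handled by the classical approximation machinery and yields the singular series $\prod_p \chi_p$ and the singular integral $\chi_\infty$ with their expected local-density interpretations; the convergence of the Euler product follows from routine estimates for the density of solutions modulo prime powers, provided that $n$ exceeds an explicit threshold.

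The heart of the argument, and the principal obstacle, lies in obtaining a minor-arc bound that delivers a power saving uniformly in both $X$ and $Y$, so that no restriction on the ratio $X/Y$ is needed. To this end one applies Davenport-Birch Weyl differencing $d-1$ times to the exponential sum; because the $F^{(i)}$ have mixed bidegree, the differencing directions in the $\B x$ and $\B y$ coordinates must be orchestrated so that the resulting bound is symmetric in the two height parameters. The differenced sum is then controlled by the number of integer points on an auxiliary variety determined by the multilinear symbols of the $F^{(i)}$, and the Zariski-open subset $\cal U$ is defined precisely so as to excise those degenerate pairs $(\B x, \B y)$ where this auxiliary variety has anomalously large dimension, typically the locus where an appropriate Jacobian matrix in the polarised forms fails to attain full rank. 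A Birch-style geometry-of-numbers argument on the complementary good set, together with the hypothesis $n > 2^{d-1} d^4 (d+1)(d+2)$, then delivers the required power saving; combining this with the major-arc asymptotic completes the proof.
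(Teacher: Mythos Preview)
Your proposal sketches the direct bihomogeneous circle method of \cite{FRF,FRF2}, treating $(\B x,\B y)$ jointly and differencing simultaneously in both sets of variables. This is precisely the approach the present paper \emph{departs from}. The claim that the differencing can be ``orchestrated so that the resulting bound is symmetric in the two height parameters'' and thus uniform in $X/Y$ is where the argument breaks: when $Y$ is very small compared to $X$, the Weyl/van der Corput differences taken in the $\B y$-direction run over boxes of side $\ll Y$, and the resulting geometry-of-numbers counts force the lower bound on $n$ to grow with $\psi^{-1}=\log X/\log Y$. Indeed, \cite[Theorem~2.1]{FRF2} already carries out exactly your programme and obtains a condition of the shape $n \gg 2^{d-2}d(d+1)(1+\psi^{-1})$, which blows up as $Y/X\to 0$. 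No amount of symmetrising the differencing pattern removes this dependence; it is intrinsic to treating the two box sizes on an equal footing.

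The paper's actual proof proceeds quite differently. By symmetry one may assume $Y\le X$, and the key new idea is a \emph{slicing} strategy: fix $\B y\in\cal U(\Z)$ with $|\B y|\le Y$ and study $N_{\B y}(X)$, the number of $\B x$ with $|\B x|\le X$ solving the system $\Phi_{\B y}^{(j)}(\B x)=0$ for $1\le j\le d$. This is a system of one form of each degree $1,\ldots,d$ in $\B x$ alone, handled via the Browning--Heath-Brown \cite{bhb} machinery for differing degrees (after eliminating the linear equation by passing to a sublattice $\Lambda_{\B y}$), with all implied constants tracked polynomially in $|\B y|$. The Zariski-open set is $\cal U=\cal V_{2,\rho}=\{\B y\in\cal V:\rk H_{\B y}>n-\rho\}$, a Hessian-rank condition that controls the singular locus in the Birch-type alternative uniformly in $\B y$; this is not the Jacobian condition you describe. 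Summing over $\B y$ gives Theorem~\ref{T6.2}, valid for $\psi\le\psi_0\asymp d^{-4}$. Finally, the identification of $\sum_{\B y}\fr S_{\B y}\fr J_{\B y}$ with $(Y)^{n-D}\chi_\infty\prod_p\chi_p$ is obtained by \emph{comparing} the slicing asymptotic at a carefully chosen $X_0=Y^{1/\psi_0}$ with the asymptotic from \cite[Theorem~2.1]{FRF2}, which is valid at that aspect ratio; for $\psi\ge\psi_0$ one simply quotes \cite{FRF2} directly. Your outline contains none of these ingredients, and without them the uniformity in $X/Y$ claimed in Theorem~\ref{T1.1} cannot be achieved.
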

Note that Theorem~\ref{T1.1} is a slightly simplified version of what our methods yield; by a more thorough analysis it would be possible to obtain some improvements in the lower-order terms at the expense of a significantly more complicated expression, but no easy improvement of the order of growth $2^d d^6$ in our result. In particular, we do not expect our results to be competitive when $d$ is small. For this reason, even though a modification of our approach would provide results for $d\in\{2,3,4\}$ also, we refrain from including the analysis of those cases as the expected results would likely be quite weak.

Clearly, the problem is symmetric in $X$ and $Y$, so in our discussion we may assume without loss of generality that $Y \le X$. In the special case when $Y=X$, the conclusion of Theorem~\ref{T1.1} follows from \cite[Theorem~1.1]{FRF} under the more lenient condition that $n>3 \cdot 2^d (d-1)(d+2)$, and subsequent work \cite[Theorem~2.1]{FRF2} establishes a conclusion similar to that of Theorem~\ref{T1.1} above under the additional condition that $n$ should be large enough in terms of $\log X/\log Y$, which is acceptable if $X$ is at most a bounded power of $Y$. 
The main new input in our present work is therefore our treatment of the situation when $Y$ is vastly smaller than $X$. Unlike in our former work in \cite{FRF,FRF2}, where we allowed the variables $\B x$, $\B y$ to vary independently, we pursue a slicing approach inspired by \cite{damaris} in which we fix a point $\B y \in \cal U(\Z)$ and then investigate the number $N_{\B y}(X; \cal U)$ of points $\B x \in \cal U(\Z) \cap [-X,X]^n$ for which \eqref{1.1} is satisfied with that particular value $\B y$. We then have 
\begin{align}\label{1.2}
	N_{\cal U}(X,Y) = \sum_{\substack{\B y \in \cal U(\Z)\\ |\B y| \le Y}} N_{\B y}(X; \cal U),
\end{align}
and we aim to establish bounds of the shape 
\begin{align*}
	N_{\B y}(X; \cal U) = c_{\B y} X^{n-\frac12 d(d+1)} + O(X^{n-{\frac12} d(d+1)-\nu})
\end{align*} 
for some constant $c_{\B y}$ and some positive number $\nu$. 

For generic $\B y$, the quantity $N_{\B y}(X) = N_{\B y}(X; \cal V)$ can be understood by applying the methods of Browning and Heath-Brown \cite{bhb} for systems of homogeneous equations with differing degrees, although we need to be careful to track the dependence on the coefficients as these will be polynomially dependent on $\B y$. Unfortunately, this strategy breaks down if $\B y$ fails to satisfy a certain second-order non-singularity condition. When $H_{\B x}$ denotes the Hessian of $F$ at the point $\B x$, we set 
\begin{align*}
	\cal V^*_{2,\rho} = \{\B x \in \cal V: \rk H_{\B x} \le n-\rho\},
\end{align*}
and let $\cal V_{2,\rho} = \cal V \setminus \cal V^*_{2,\rho}$. In particular, $\cal V_{2,\rho}$ is Zariski-open in $\cal V$ for all $1 \le \rho \le n$. 

The following two by-products of our strategy may be of independent interest and are simplified versions of Theorems \ref{T6.1} and \ref{T6.2} below, respectively.

\begin{thm}\label{T1.2}
	Let $F \in \Z[x_1, \ldots, x_n]$ be a non-singular form of degree $d \ge 5$ defining a hypersurface $\cal V$. Let further $\psi \in (0,1/(2 d^4)]$, and suppose that 
	\begin{align*}
		n\ge 2^{d}d(d^2-1) + \rho.
	\end{align*}
	Then there exists a positive real number $\nu$ with the property that 
	\begin{align*}
		N_{\B y}(X) = X^{n-\frac12 d(d+1)} \fr S_{\B y} \fr J_{\B y} + O(X^{n-\frac12 d(d+1) - \nu}) 
	\end{align*}
	uniformly for all $\B y \in \cal V_{2,\rho}(\Z)$ satisfying $|\B y| \le X^{\psi}$. Moreover, the local factors satisfy $0 \le \fr S_{\B y} \ll_{\B y} 1$ and $0 \le \fr J_{\B y} \ll_{\B y} 1$.
\end{thm}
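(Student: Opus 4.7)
The plan is to reduce \eqref{1.1} to a system of polynomial equations in $\B x$ parametrised by $\B y$, and then apply a circle method analysis for systems of differing degrees in the style of Browning and Heath-Brown \cite{bhb}, carefully tracking the dependence of all estimates on $\B y$. Expanding
\begin{align*}
	F(u\B x+ v\B y) = \sum_{i=0}^d \binom{d}{i} u^{d-i} v^i \Phi_i(\B x;\B y),
\end{align*}
where $\Phi_i(\B x;\B y)$ denotes the $i$-th polar form, homogeneous of degree $d-i$ in $\B x$ and of degree $i$ in $\B y$, the condition \eqref{1.1} is equivalent to the simultaneous vanishing of $\Phi_j(\B x;\B y)$ for $0 \le j \le d-1$, together with $F(\B y)=0$. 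For fixed $\B y \in \cal V$ this is a system of $d$ forms in $\B x$ of degrees $d, d-1, \ldots, 1$, with coefficients polynomial in $\B y$, and the sum of these degrees is exactly $\tfrac12 d(d+1)$, matching the expected exponent.

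The circle method is then applied to the exponential sum
\begin{align*}
	S(\ba; \B y) = \sum_{\substack{\B x \in \Z^n\\|\B x|\le X}} e\Bigl(\sum_{j=0}^{d-1} \alpha_j \Phi_j(\B x; \B y)\Bigr),
\end{align*}
whose integral over $\ba \in [0,1)^d$ equals $N_{\B y}(X)$. The minor arc estimate is obtained via Weyl differencing of the leading form $F(\B x)$, following the generalised Davenport--Birch strategy of \cite{bhb}. The crucial input here is the hypothesis $\B y \in \cal V_{2,\rho}$: since $\Phi_{d-2}(\B x;\B y) = \tfrac12 \B x^T H_{\B y} \B x$ by polar symmetry, the assumption $\rk H_{\B y} > n-\rho$ forces this quadratic form in $\B x$ to have rank at least $n-\rho+1$. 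The supplementary term $\rho$ in the dimensional hypothesis $n\ge 2^d d(d^2-1)+\rho$ precisely compensates for the allowed rank deficiency of the Hessian, enabling an effective minor arc bound even when $H_{\B y}$ is mildly degenerate. A standard major arc analysis then produces the singular series $\fr S_{\B y}$ and singular integral $\fr J_{\B y}$; their non-negativity and the uniform boundedness $\fr S_{\B y}, \fr J_{\B y} \ll_{\B y} 1$ follow from routine arguments, the $\B y$-dependence entering only via the coefficients of the $\Phi_j(\,\cdot\,;\B y)$, which are polynomials of bounded degree in $\B y$.

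The principal technical obstacle is securing uniformity in $\B y$. Since the coefficients of $\Phi_j(\,\cdot\,;\B y)$ are polynomials of degree $j \le d-1$ in $\B y$, every estimate on $S(\ba;\B y)$ acquires a factor polynomial in $|\B y|$. The restriction $|\B y|\le X^\psi$ with $\psi\le 1/(2d^4)$ is chosen so that, when these polynomial growth factors are accumulated through the $d-1$ applications of Weyl differencing and the usual major arc manipulations, they remain dominated by the power saving in $X$ afforded by the minor arc bound. Carrying this out therefore reduces to a careful bookkeeping exercise: establishing an explicit Weyl-type inequality for $S(\ba;\B y)$ with controlled dependence on $|\B y|$, and then verifying that the resulting net power saving exceeds the accumulated polynomial growth under the stated hypothesis on $n$ and the smallness restriction on $\psi$.
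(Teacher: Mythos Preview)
Your high-level strategy is correct and matches the paper's: treat \eqref{1.1} as a system of forms of degrees $1,\ldots,d$ in $\B x$ with coefficients polynomial in $\B y$, run a Browning--Heath-Brown circle method with all $\B y$-dependence tracked, and use the smallness of $\psi$ to absorb the resulting polynomial losses. However, there is a genuine gap in your argument as written.

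You invoke the Hessian rank condition only for the quadratic member $\Phi_{d-2}(\,\cdot\,;\B y)=\tfrac12\B x^T H_{\B y}\B x$. But in the Browning--Heath-Brown framework one needs, for \emph{every} $j$, control on the dimension of the auxiliary ``singular'' variety $\cal M_j(\B y)$ associated to the degree-$j$ form (the set where the multilinear forms $B^{(j)}_{\B y,m}$ simultaneously vanish). For the top form $F$ this is handled by non-singularity of $F$, and for the quadratic form by $\rk H_{\B y}$ as you say, but for the intermediate degrees $3\le j\le d-1$ nothing in your proposal explains why a single parameter $\rho$ suffices. The paper supplies the missing idea: the recursion $\dim\cal M_{j-1}(\B y)\ge\dim\cal M_j(\B y)-s$ (obtained by specialising one differencing variable to $\B y$) together with the identification $\cal M_2(\B y)=\langle\ker H_{\B y},\B y\rangle$ shows that $\dim\cal M_j(\B y)\le (j-2)s+\dim\ker H_{\B y}+1$ for every $j$. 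Thus the Hessian rank hypothesis propagates upward to bound all the relevant singular loci simultaneously, and this is exactly what makes the uniform bound $n\ge 2^d d(d^2-1)+\rho$ achievable. Without this step you would need a separate rank-type hypothesis for each $\Phi_j$, and the theorem as stated would not follow.

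Two smaller points. First, the paper eliminates the linear equation at the outset by restricting to the sublattice $\Lambda_{\B y}=\{\B x:\Phi^{(1)}_{\B y}(\B x)=0\}$, which introduces a lopsided box and requires a Schindler--Sofos geometry-of-numbers lemma; your version keeps the linear form in the system, which is also viable, but either way the interaction between the $\B y$-dependent coefficients and the box geometry is more than routine. Second, the phrase ``Weyl differencing of the leading form'' undersells the mechanism: the minor arc bound comes from the full van der Corput cascade of \cite{bhb}, in which at each stage $j$ a $Q_{j+1}$-shift isolates the degree-$j$ contribution, and it is the combined constraints from all stages that produce the final bound on $n$ after optimisation.
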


The set $\cal V^*_{2,\rho}$ is clearly algebraically defined for any $\rho$, and it is known (see e.g. \cite[Lemma~2]{HB10}) that $\dim \cal V^*_{2,\rho} \le n-\rho$. Consequently, we have $N_{\B y}(X; \cal V_{2,\rho})=N_{\B y}(X)+O(X^{n-\rho})$, and we see that when $\rho >\frac12 d(d+1)$, the anticipated main term exceeds any error that might arise if we replace $\cal V_{2,m}$ by $\cal V$ itself. This allows us to derive a bound on $N_{\cal U}(X,Y)$ from bounds on $N_{\B y}(X)$. 

\begin{thm}\label{T1.3}
	Let $F$ and $\cal V$ be as before with $d \ge 5$, and for some $\psi \in (0,1/(2d^4)]$ set $Y=X^{\psi}$. Suppose that 
	\begin{align*}
		n \ge  2^d d(d^2-1)
	\end{align*}
	and set $\cal U = \cal V_{2, \frac12 d(d+1)+1}$. 
	Then there exists a real number $\nu > 0$ for which 
	\begin{align*}
		N_{\cal U}(X,Y) = X^{n-\frac12 d(d+1)} \sum_{\substack{\B y \in \cal U(\Z) \\|\B y| \le Y}} \fr S_{\B y} \fr J_{\B y} + O((XY)^{n-\frac12 d(d+1)-\nu}).
	\end{align*}
\end{thm}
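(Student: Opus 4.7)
The plan is to combine the slicing decomposition \eqref{1.2} with Theorem~\ref{T1.2}, bridging the two via the dimension bound on $\cal V^*_{2,\rho_0}$ recalled immediately before the statement. Throughout, set $\rho_0 = \frac12 d(d+1) + 1$, so that $\cal U = \cal V_{2, \rho_0}$, and write $Y = X^\psi$.

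For each $\B y \in \cal U(\Z)$ with $|\B y| \le Y$, I would first decompose
\begin{align*}
    N_{\B y}(X; \cal U) = N_{\B y}(X) - N_{\B y}(X; \cal V^*_{2, \rho_0}).
\end{align*}
The second term is bounded by the number of integer points of height at most $X$ lying on the variety $\cal V^*_{2, \rho_0}$, whose dimension is at most $n - \rho_0$, and therefore contributes $O(X^{n - \rho_0})$ uniformly in $\B y$. Since $\B y \in \cal V_{2, \rho_0}(\Z)$ with $|\B y| \le X^\psi$ and $\psi \le 1/(2d^4)$, Theorem~\ref{T1.2} applied with $\rho = \rho_0$ (the extra $\rho_0$ in its hypothesis being absorbed into the condition on $n$ in Theorem~\ref{T1.3}) yields, uniformly in $\B y$,
\begin{align*}
    N_{\B y}(X) = X^{n - \frac12 d(d+1)} \fr S_{\B y} \fr J_{\B y} + O(X^{n - \frac12 d(d+1) - \nu})
\end{align*}
for some $\nu > 0$. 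Substituting these two relations into \eqref{1.2} and bounding the number of admissible $\B y$ trivially by $O(Y^n)$, we arrive at
\begin{align*}
    N_{\cal U}(X, Y) = X^{n - \frac12 d(d+1)} \sum_{\substack{\B y \in \cal U(\Z) \\ |\B y| \le Y}} \fr S_{\B y} \fr J_{\B y} + O\bigl(Y^n X^{n - \frac12 d(d+1) - \nu} + Y^n X^{n - \rho_0}\bigr).
\end{align*}

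The main obstacle is to absorb both error terms into a single expression of the shape $O((XY)^{n - \frac12 d(d+1) - \nu'})$ for some $\nu' > 0$. With $Y = X^\psi$, a direct comparison of exponents shows that the two error contributions are acceptable precisely when
\begin{align*}
    (1+\psi)\nu' + \psi \cdot \tfrac12 d(d+1) \le \nu \quad \text{and} \quad (1+\psi)\nu' + \psi \cdot \tfrac12 d(d+1) \le 1,
\end{align*}
the ``$1$'' on the right being the saving $\rho_0 - \frac12 d(d+1) = 1$ built into the choice of $\rho_0$. The condition $\psi \le 1/(2d^4)$ bounds $\psi \cdot \frac12 d(d+1)$ by $(d+1)/(4d^3)$, which is comfortably below $1$ for $d \ge 5$, and also strictly below $\nu$ since the same restriction $\psi \le 1/(2d^4)$ is what governs the admissible size of $\nu$ in Theorem~\ref{T1.2}. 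Both inequalities are therefore satisfied simultaneously by taking $\nu'$ sufficiently small, which completes the argument.
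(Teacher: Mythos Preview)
Your strategy---slicing via \eqref{1.2}, invoking Theorem~\ref{T1.2} for each fixed $\B y$, and controlling the complement by the dimension bound on $\cal V^*_{2,\rho_0}$---is natural, but as written it has two genuine gaps.

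The first and more serious one is the hypothesis mismatch. Theorem~\ref{T1.2} applied with $\rho=\rho_0=\tfrac12 d(d+1)+1$ requires $n\ge 2^{d}d(d^2-1)+\rho_0$, which is \emph{strictly stronger} than the hypothesis $n\ge 2^{d}d(d^2-1)$ of Theorem~\ref{T1.3}. Your parenthetical remark that ``the extra $\rho_0$ in its hypothesis [is] absorbed into the condition on $n$ in Theorem~\ref{T1.3}'' has the implication the wrong way round: you are trying to draw the conclusion under a weaker assumption than the one you are entitled to use. The second gap is the claim that $\psi\cdot\tfrac12 d(d+1)<\nu$. Theorem~\ref{T1.2} only asserts the existence of \emph{some} $\nu>0$, with no quantitative lower bound; there is therefore no a~priori guarantee that the summed error $Y^{n}X^{n-D-\nu}$ (or even the sharper $Y^{n-d}X^{n-D-\nu}$ obtained via Birch's bound, which you should be using since $\B y\in\cal V$) can be absorbed into $(XY)^{n-D-\nu'}$.

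The paper avoids both difficulties by not black-boxing Theorem~\ref{T1.2}. It derives Theorem~\ref{T1.3} from the sharper Theorem~\ref{T6.2}, which goes back to Proposition~\ref{P5.4} and chooses the parameters $\delta_j$ so as to incorporate the extra factor $Y^{n-d}$ arising from the sum over $\B y$ \emph{within} the optimisation. This yields a quantity $n_2(\psi)$, slightly smaller than the $n_1(\psi)$ underlying Theorem~\ref{T1.2}, for which one can show $n_2(\psi_1)\le 2^{d}d(d^2-1)-\rho_0$ when $\psi_1^{-1}=2d^4$; only with this sharper bound does the condition $n\ge 2^{d}d(d^2-1)$ suffice after setting $\rho=\rho_0$. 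In short, the two theorems are not related by a simple summation over $\B y$ but are parallel consequences of a common engine (Proposition~\ref{P5.4}) run with different parameter choices.
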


The set $\cal V^*_{2,1}$ is, as mentioned above, a subvariety of $\cal V$ with codimension $1$. In particular, it is defined by the zero set of the simultaneous equations $F(\B x) = 0$ and $\det H_{\B x}=0$. The function $\Delta(\B x)=\det H_{\B x}$ is a form of degree $(d-2)n$ in $n$ variables, and according to standard heuristics one might hope that, unless the variety defined by $\Delta(\B x)=0$ contains high-dimensional subvarieties of low degree, the set $\cal V^*_{2,1}(\Z)$ might only have a finite number of primitive points, and might potentially even consist only of the origin. In such a situation, it would be permissible in Theorem~\ref{T1.1} to take $\cal U = \cal V \setminus \{ \bm 0\}$. Unfortunately, our current understanding of the size of the set $\cal V^*_{2,\rho}$ is quite weak. Not  only is there no sufficient condition on the geometry of $F$ presently known that would imply that $\cal V^*_{2,\rho}(\Z) = \{ \bm 0\}$  for some $\rho$ sufficiently small compared to $n$, but indeed such a result seems to be quite out of reach for present methods. 
Nonetheless, by bounding the number of integral points in $\cal V^*_{2,\rho}$ we are still able to establish asymptotic formul{\ae} for $N(X,Y)$  that extend the admissible range of $Y$ compared to what had previously been known in \cite[Theorem~2.1]{FRF2}.

\begin{thm}\label{T1.4}
	Let $F \in \Z[x_1, \ldots, x_n]$ be a non-singular form of degree $d \ge 5$, and suppose that $Y = X^\psi$, where $0 < \psi < (2d^4)^{-1}$. Furthermore, suppose that
	\begin{align*}
		n> 2^{d-1}d^4(d+1)(d+2) + \textstyle{\frac12}d(d-1) \psi^{-1}.
	\end{align*}
	Then there exists a positive real number $\nu$ with the property that
	\begin{align*}
		N(X, Y) = (XY)^{n-\frac12d(d+1)} \chi_{\infty}\prod_{p \text{ prime}} \chi_p + O((XY)^{n-\frac12d(d+1)}X^{-\nu}),
	\end{align*}
	where the local factors are the same as in Theorem~\ref{T1.1}.
\end{thm}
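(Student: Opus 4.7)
The strategy is to apply the proof of Theorem~\ref{T1.1} with an enlarged Zariski-open subset and then bound the contribution of the excised bad locus. Fix an integer $\rho$ with
\begin{align*}
	\tfrac12 d(d+1) + \tfrac12 d(d-1)\psi^{-1} < \rho \le \tfrac12 d(d+1) + 1 + \bigl(n - 2^{d-1}d^4(d+1)(d+2)\bigr),
\end{align*}
whose existence is guaranteed by the hypothesis on $n$, and set $\cal U = \cal V_{2, \rho}$. Inspection of the circle-method apparatus underlying Theorem~\ref{T1.1}---whose key input is the slicing analysis of Theorems~\ref{T1.2} and~\ref{T1.3}---reveals that the rank parameter enters only additively into the Birch-type condition $n \ge 2^d d(d^2-1) + \rho$ at the level of individual slices. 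Rerunning the argument with the present, larger $\rho$ therefore yields
\begin{align*}
	N_{\cal U}(X,Y) = (XY)^{n - \frac12 d(d+1)} \chi_\infty \prod_p \chi_p + O\bigl((XY)^{n - \frac12 d(d+1)} X^{-\nu}\bigr),
\end{align*}
with the same local factors as in Theorem~\ref{T1.1}; since $\cal V^*_{2,\rho}$ has positive codimension, these densities are insensitive to the particular choice of $\cal U$.

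It remains to estimate the discrepancy $N(X,Y) - N_{\cal U}(X,Y)$, which counts pairs satisfying~\eqref{1.1} in which at least one of $\B x, \B y$ lies in the bad locus $\cal V^*_{2,\rho}$. A symmetric union bound gives
\begin{align*}
	N(X,Y) - N_{\cal U}(X,Y) \le \sum_{\substack{\B y \in \cal V^*_{2,\rho}(\Z) \\ |\B y| \le Y}} N_{\B y}(X) + \sum_{\substack{\B x \in \cal V^*_{2,\rho}(\Z) \\ |\B x| \le X}} N_{\B x}(Y).
\end{align*}
The dimension estimate $\dim \cal V^*_{2,\rho} \le n - \rho$ noted before Theorem~\ref{T1.3}, combined with a standard lattice-point argument, yields $|\cal V^*_{2,\rho}(\Z) \cap [-Y,Y]^n| \ll Y^{n-\rho}$ and analogously for the $X$-box. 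The pointwise bound $N_{\B y}(X) \le N(X) \ll X^{n-d}$ follows from Birch's classical theorem applied to the non-singular form $F$, which applies because our hypothesis comfortably implies $n > 2^d(d-1)$; the analogous bound $N_{\B x}(Y) \ll Y^{n-d}$ holds by the same token. Hence
\begin{align*}
	N(X,Y) - N_{\cal U}(X,Y) \ll Y^{n-\rho} X^{n-d} + X^{n-\rho} Y^{n-d},
\end{align*}
and substituting $Y = X^\psi$ a direct exponent comparison shows that both terms are dominated by $(XY)^{n - \frac12 d(d+1)} X^{-\nu}$ for some $\nu > 0$, precisely thanks to the lower bound on $\rho$.

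The principal obstacle lies in the first step: verifying that the proof of Theorem~\ref{T1.1} genuinely accommodates the replacement of its default rank threshold by the substantially larger $\rho \asymp d^2\psi^{-1}$. This is essentially a bookkeeping exercise---tracking the linear dependence of the Weyl-type minor-arc bounds, the singular integral and the singular series on the parameter $\rho$---and requires no new ideas, but it is precisely here that the additive correction $\frac12 d(d-1)\psi^{-1}$ in the hypothesis on $n$ becomes indispensable.
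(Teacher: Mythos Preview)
Your approach is essentially the same as the paper's. The paper takes $\cal U = \cal V \setminus \{\bm 0\}$ (so that $N_{\cal U}=N$ outright) and, in the decomposition \eqref{6.8} from the proof of Theorem~\ref{T6.2}, bounds the error $E_{\cal B}(\psi)$ arising from $\B y \in \cal B(\rho)$ by exactly your estimate $X^{n-d}Y^{n-\rho}$ via \eqref{3.9} and Birch, arriving at the identical threshold $\rho > D + \tfrac12 d(d-1)\psi^{-1}$; it then reruns the endgame of Theorem~\ref{T1.1} with this $\rho$. Your choice $\cal U = \cal V_{2,\rho}$ forces you to also control the symmetric term $X^{n-\rho}Y^{n-d}$ from bad $\B x$, which is harmless but superfluous---the slicing argument is asymmetric and never requires $\B x \in \cal A(\rho)$, so the paper avoids this term entirely. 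One small correction to your final paragraph: the additive term $\tfrac12 d(d-1)\psi^{-1}$ in the hypothesis on $n$ is what makes the \emph{interval} for $\rho$ non-empty (i.e.\ it is needed for your second step, the bad-locus bound), not for the bookkeeping in the first step, which only needs $n-\rho$ large.
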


The reader may wonder how the lower bound on $n$ compares with that which can be extracted from \cite[Theorem~2.1]{FRF2}. In that result, the bound on the number of variables in the case when $\psi$ is small can be written in terms of $\psi$ as 
\begin{align*}
	n > 2^{d-2}  d(d+1)(1+\psi^{-1}). 
\end{align*}
It is clear that for $\psi \ll d^{-4}$ our new result is significantly stronger.

\textbf{Notation.} Throughout the paper, the following notational conventions will be observed. Any  statements containing the letter $\eps$ are asserted to hold for all sufficiently small values of $\eps$, and we make no effort to track the precise `value' of $\eps$, which is consequently allowed to change from one line to the next. We will be liberal in our use of vector notation. In particular, equations and inequalities involving vectors should always be understood entrywise. In this spirit, we write $| \B x | = \| \B x \|_\infty = \max |x_i|$, as well as $(\B a, b) = \gcd(a_1, \ldots, a_n, b)$. For $\alpha \in \R$ we write $\| \alpha \| = \min_{z \in \Z}|\alpha - z|$. Finally, the implicit constants in the Landau and Vinogradov notations are allowed to depend on all parameters except $X$, $Y$ and $\B y$.

\textbf{Acknowledgments.} During the production of this memoir, the author was supported by Starting Grant no.~2017-05110 from the Swedish Research Council (Vetenskapsr{\aa}det). The author is also grateful to Tim Browning and Per Salberger for valuable conversations around the topic of this paper.

\section{Van der Corput differences}\label{S2}

Let $\Phi$ denote the symmetric $d$-linear form associated to $F$, so that $F(\B x) = \Phi(\B x, \ldots, \B x)$. Then after expanding, the form $F$ may be written as
\begin{equation*}
	F\left(u \B x+ v \B y\right) = \sum_{j=0}^d \binom{d}{j} u^j v^{d-j} \Phi(\underbrace{\B x, \ldots, \B x}_{j \text{ entries}}, \underbrace{\B y, \ldots, \B y}_{d-j \text{ entries}}),
\end{equation*} 
and our counting function $N_{\cal U}(X, Y)$ counts integer solutions $\B x, \B y \in \cal U$  to the system of equations
\begin{align}\label{2.1}
	\Phi(\underbrace{\B x, \ldots, \B x}_{j \text{ entries}}, \underbrace{\B y, \ldots, \B y}_{d-j \text{ entries}}) = 0 \qquad (0 \le j \le d),
\end{align}
where $|x_i| \le X$ and $|y_i|\le Y$ for $1 \le i \le n$.

In this and the following sections we fix a value of $\B y$ and consider \eqref{2.1} as a system of equations in $\B x$ only. Eventually, we will have to consider only such choices for $\B y$ that lie in a suitable Zariski-open subset $\cal U$. This allows us in particular to exclude the value $\B y = \bm 0$. For $1 \le j \le d$ we write $\Phi^{(j)}_{\B y}(\B x)$ for the form having $j$ entries $\B x$ and $d-j$ entries $\B y$. In this notation, $N_{\B y}(X)$ denotes the number of points $\B x \in \Z^n \cap [-X, X]^n$ satisfying 
\begin{align}\label{2.2}
	\Phi_{\B y}^{(j)}(\B x) &= 0 \qquad (1 \le j \le d).
\end{align}

The system \eqref{2.2} consists of forms of consecutive degrees $1, \ldots, d$. Asymptotic formul{\ae} for the number of solutions of such systems can be obtained by the machinery of Browning and Heath-Brown \cite{bhb}. However, before embarking on that argument, it is convenient to eliminate one variable by solving the linear equation, so that all forms explicitly occurring in the system have degree two or higher. To this end, observe that the equation $\Phi_{\B y}^{(1)}(\B x) = 0$ can be expressed as 
\begin{align}\label{2.3}
	l_1(\B y) x_1 + \ldots + l_n(\B y) x_n = 0, 
\end{align}
where the coefficients $l_i = l_i(\B y)$ are polynomials of degree $d-1$ in $\B y$.  Since $F$ is non-singular by assumption, the set of $\B x \in \Z^n$ satisfying \eqref{2.3} forms an $(n-1)$-dimensional lattice $\Lambda_{\B y} \subseteq \Z^n$. Denote by $\fr A_{\B y}(X) \subseteq \Z^n$ the set of lattice points $\B x \in \Lambda_{\B y}$ for which $|\B x| \le X$. Thus, we may equivalently consider the quantity $N_{\B y}(X)$ to be given by the number of points $\B x \in \fr A_{\B y}(X)$ satisfying the system of equations 
\begin{align*}
	\Phi_{\B y}^{(j)}(\B x) &= 0 \qquad (2 \le j \le d).
\end{align*}

In order to understand the counting function $N_{\B y}(X)$, we encode the summation conditions in exponential sums. Let $\ba=(\alpha_2, \ldots, \alpha_{d}) \in [0,1)^{d-1}$, then $N_{\B y}(X)$ is given by
\begin{align}\label{2.5}
    N_{\B y}(X) &= \sum_{\B x \in \fr A_{\B y}(X)} \int_{[0,1)^{d-1}} e\bigg(\sum_{j=2}^d \alpha_j \Phi^{(j)}_{\B y}(\B x)\bigg) \D \ba
    = \int_{[0,1)^{d-1}}T_{\B y}(\ba; X) \D \ba,\
\end{align}
where we introduced the exponential sum
\begin{align*}
    T_{\B y}(\ba; P)=\sum_{\B x \in \fr A_{\B y}(P)}e\bigg(\sum_{j=2}^d \alpha_j \Phi^{(j)}_{\B y}(\B x)\bigg).
\end{align*}
In our arguments below, we will omit the parameter $P$ from the notation whenever there is no danger of confusion. In particular, we drop it in  most cases when $P=X$, highlighting it only when we consider exponential sums of size different from $X$. 

For simpler notation below, we write $s = n-1$.  By \cite[Lemma 1]{HB02}, the lattice $\Lambda_{\B y}$ has discriminant
\begin{align}\label{2.4}
	d(\Lambda_{\B y})\asymp | \B l(\B y)| \ll |\B y|^{d-1},
\end{align}
and we have $\card \fr A_{\B y}(X) \asymp X^{s}/d(\Lambda_{\B y})$. Fix a basis $\cal B = \{\B b_1, \ldots, \B b_s\} \subseteq \R^n$ of $\Lambda_{\B y}$, which by the same lemma we are free to choose in such a way that $|\B b_i| \asymp \mu_i$, where $\mu_1 > \ldots, > \mu_s$ are the successive minima of the lattice $\Lambda_{\B y}$. Thus, when $\B x \in \fr A_{\B y}(X)$ with $\B x =\xi_1 \B b_1 + \ldots + \xi_s \B b_s$, we have $\xi_i \ll X/\mu_i$ for $1 \le i \le s$. It is known that $\mu_1 \cdots \mu_s \asymp d (\Lambda_{\B y})$. Set 
\begin{align*}
	\fr B_{\B y}(X) = \prod_{i=1}^s [-c X/\mu_i, c X/\mu_i],
\end{align*}
where $c \ll 1$ is chosen large enough so that the coordinate vector $\bm \xi$ of $\B x$ lies in $\fr B_{\B y}(X)$ whenever $\B x \in \fr A_{\B y}(X)$. Moreover, for $2 \le j \le d$ set $\Psi^{(j)}_{\B y}(\bm \xi)=\Phi^{(j)}_{\B y}(\B x)$ and write
\begin{align*}
	\phi_{\B y}(\ba;\bm \xi) = \sum_{j=2}^d \alpha_j \Psi^{(j)}_{\B y}(\bm \xi).
\end{align*}

By an argument along the lines of that of Lemma~5.2 in \cite{FRF} one sees that 
\begin{align} \label{2.6}
 	T_{\B y}(\ba) \ll  X^{\eps}U_{\B y}(\ba), 
\end{align}
where 
\begin{align*}
	U_{\B y}(\ba)=\sup_{\bm \eta \in [0,1]^{d-1}}\Bigg|\sum_{\bm \xi \in \fr B_{\B y}(X)}e(\phi_{\B y}(\bm \alpha; \bm \xi) + \bm \eta \cdot \bm \xi)\Bigg|.
\end{align*}
This exponential sum is related to that considered by Schindler and Sofos \cite{SS} in their treatment of forms in many variables over lopsided boxes. In comparison with their result, however, our argument is more sensitive to the degree of the lopsidedness of the box. Fortunately, the discriminant of our lattice is fairly small. Indeed, since our methods will break down when $\psi \gg 1/d^2$ (see \eqref{2.10} below), and for our theorems we require even $\psi \ll 1/d^4$, we find ourselves in a situation where the discriminant of our lattice satisfies the bound $d(\Lambda_{\B y}) \ll X^{(d-1)\psi} \ll X^{O(1/d^3)}$.

The discrete differencing operator~$\partial$ is defined by its action on a polynomial~$F$ via the relation $\partial_{\B h}F(\B x) = F(\B x + \B h)-F(\B x)$, and we write
\begin{align*}
    \partial_{\B h_i, \ldots, \B h_1}F(\B x) = \partial_{\B h_i} \cdots \partial_{\B h_1} F(\B x)
\end{align*}
for its~$i$-fold iteration. This allows us to state our basic differencing lemma, which is fairly straightforward and essentially follows from~\cite[Lemma~2.1]{birch}.

\begin{lem}\label{L2.1}
    Let $1 \le i \le d-1$. Then one has
    \begin{align*}
        | U_{\B y}(\ba)|^{2^{i}} \ll \left(\frac{X^s}{d(\Lambda_{\B y})}\right)^{(2^{i}-i-1)} \dsum{\B h_l \in \fr B_{\B y}(X)}{1 \le l \le i} \Bigg|\sum_{\bm \xi \in \fr C}e\bigg(\partial_{\B h_i, \ldots, \B h_1} \phi_{\B y}(\ba;\bm \xi)\bigg)\Bigg|,
    \end{align*}
    where the sets $\fr C = \fr C(\B h_1, \ldots, \B h_i)$ are boxes (possibly empty) contained inside $\fr B_{\B y}(X)$.
\end{lem}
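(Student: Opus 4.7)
The plan is to prove the lemma by induction on $i$ via iterated van der Corput / Weyl differencing, in the spirit of Birch's original Lemma~2.1, with a minor twist to absorb the supremum over the linear phase $\bm\eta$ appearing in the definition of $U_{\B y}(\ba)$.

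For the base case $i=1$, I will expand $|U_{\B y}(\ba)|^2 = \sup_{\bm\eta}|S(\bm\eta)|^2$, where $S(\bm\eta)=\sum_{\bm\xi \in \fr B_{\B y}(X)} e(\phi_{\B y}(\ba;\bm\xi) + \bm\eta \cdot \bm\xi)$, as a double sum over $\bm\xi, \bm\xi' \in \fr B_{\B y}(X)$ and substitute $\B h_1 = \bm\xi - \bm\xi'$. The key observation is that the linear character decouples as $e(\bm\eta \cdot (\bm\xi - \bm\xi')) = e(\bm\eta\cdot\B h_1)$ and thus depends only on $\B h_1$; pulling absolute values outside the $\B h_1$-sum absorbs the supremum over $\bm\eta$ entirely, yielding the stated bound with $\fr C_1 = \fr B_{\B y}(X) \cap (\fr B_{\B y}(X) - \B h_1)$ and exponent $a_1 := 2^1 - 1 - 1 = 0$ on the factor $X^s/d(\Lambda_{\B y})$.

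For the inductive step, assuming the bound for $i$ with exponent $a_i = 2^i - i - 1$ on the factor $X^s/d(\Lambda_{\B y}) \asymp |\fr B_{\B y}(X)|$, I will square both sides and apply Cauchy--Schwarz to the $i$-fold sum over $\B h_1, \ldots, \B h_i$. Since each differencing variable ranges in a set of cardinality $\asymp |\fr B_{\B y}(X)|$, the Cauchy--Schwarz step produces an extra factor of $|\fr B_{\B y}(X)|^i$ and replaces the inner sum $|W_i|$ by $|W_i|^2$. A second application of the squaring identity to $|W_i|^2$ --- which by now carries no free linear phase --- introduces a new differencing variable $\B h_{i+1}$ and a further shrunk sub-box $\fr C_{i+1} \subseteq \fr B_{\B y}(X)$, giving the bound for $i+1$ with $a_{i+1} = 2 a_i + i$.

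Solving the linear recursion $a_{i+1} = 2 a_i + i$ with initial condition $a_1 = 0$ yields $a_i = 2^i - i - 1$, matching the exponent in the statement. The argument is almost entirely formal; the main bookkeeping obstacle is to confirm that the Cauchy--Schwarz step contributes exactly $|\fr B_{\B y}(X)|^i$ and that the successive-minima relation $\mu_1 \cdots \mu_s \asymp d(\Lambda_{\B y})$ provides the correct conversion to the factor $X^s/d(\Lambda_{\B y})$ appearing in the lemma. The supremum over $\bm\eta$ causes no genuine difficulty, being absorbed entirely at the first squaring.
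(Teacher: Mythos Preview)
Your proposal is correct and follows essentially the same route as the paper's own proof, which simply cites the standard Weyl differencing procedure from Davenport's monograph together with the identification $\card\fr B_{\B y}(X)\asymp X^s/d(\Lambda_{\B y})$. Your explicit handling of the linear phase $\bm\eta$ at the first squaring is the only extra wrinkle, and you dispose of it correctly; the recursion $a_{i+1}=2a_i+i$ with $a_1=0$ indeed yields $a_i=2^i-i-1$, matching the exponent in the statement.
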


\begin{proof}
  	Upon recalling that $\card \fr B_{\B y}(X) \asymp X^s/d(\Lambda_{\B y})$, this is a straightforward reformulation of the standard Weyl differencing procedure as for instance in Davenport's monograph~\cite[Chapter 13]{dav}.
\end{proof}

At this stage, the usual procedure would be to apply Lemma~\ref{L2.1} with $i=d-1$, so that the argument of the exponential function becomes linear, thus yielding either a non-trivial upper bound or good approximations to  the coefficient $\alpha_d$. In the situation at hand, however, this approach would lose all information connected to the forms $\Psi^{(j)}_{\B y}$ with $j<d$. So instead we follow the approach by Browning and Heath-Brown \cite{bhb} and replace the last Weyl differencing step by a suitable van der Corput step. For $2 \le j \le d$ we define functions $B_{\B y,m}^{(j)}$ for $1 \le m \le s$ via the relation
\begin{align}\label{2.7}
  	\Psi_{\B y}^{(j)}(\bm \xi, \B h_1, \ldots, \B h_{j-1}) = \sum_{m=1}^s \xi_m B_{\B y,m}^{(j)}(\B h_1, \ldots, \B h_{j-1}).
\end{align}
Furthermore, let $\theta_2, \ldots, \theta_d$ be parameters in the unit interval which will be fixed later, and define
\begin{align}\label{2.8}
   \nu_j = (j-1) \theta_j  \qquad \text{and} \qquad \omega_j = \sum_{i=j}^{d} \nu_i \qquad (2 \le j \le d).
\end{align}
Set further $D_j =\frac12j(j+1)$ for $1 \le j \le d$, and for integers $q_j$ with $2 \le j \le d$ put
\begin{align}\label{2.9}
    Q_j = \prod_{i=j}^d q_i.
\end{align}
For notational reasons we write 
\begin{align*}
	D = D_d, \qquad D_0 = 0, \qquad \omega_{d+1} = 0 \qquad \text{and} \qquad Q_{d+1}=1,
\end{align*}
and we assume 
\begin{align}\label{2.10}
	\psi <1/(2d^2) 
\end{align}
throughout.
For fixed $\theta_{j+1}, \ldots, \theta_{d}$ set
\begin{align}\label{2.11}
	R_j = X^{1-\omega_{j+1}} |\B y|^{-D_{d-j}} \mu_1^{-(d-j)}
\end{align}
and
\begin{align}\label{2.12}
	\Upsilon_j=\dsum{\B h_l \in \fr B_{\B y} (X)}{1 \le l \le j-2}\sum_{\B w \in \fr B_{\B y}( 2R_j)} \prod_{m=1}^s \min\Bigg\{\frac{X}{\mu_m}, \bigg\| j!Q_{j+1}\alpha_{j} B_{\B y,m}^{(j)}(\B h_1, \ldots, \B h_{j-2}, \B w) \bigg\|^{-1} \Bigg\} .
\end{align}
We can now state one of our key iterative lemmas.
\begin{lem}\label{L2.2}
    Let $j \in \{2, \ldots, d\}$ be fixed. When $j < d$, suppose that $\theta_{j+1}, \ldots, \theta_{d}$ are fixed in such a way that, observing \eqref{2.8}, one has
    \begin{align}\label{2.13}
        \omega_{j+1} + \psi (D_{d-j}  + (d-1)(d-j))< 1.
    \end{align}
    Suppose that for any $i$ with $j < i \le d$ there exists a natural number $q_i \ll  X^{\nu_i}|\B y|^{d-i} \mu_1$ with the property that, in view of \eqref{2.9}, one has
    \begin{align*}
        \big\|Q_i \alpha_i\big\| \ll X^{-i + \omega_i}|\B y|^{D_{d-i}}\mu_1^{d-i+1}.
    \end{align*}
    Then we have the bound
    \begin{align*}
        |U_{\B y}(\ba)|^{2^{j-1}} \ll
        \left(\frac{X^s}{d(\Lambda_{\B y})}\right)^{2^{j-1}-(j-1)}\left(\frac{R_j^s}{d(\Lambda_{\B y})}\right)^{-1} \Upsilon_j.
    \end{align*}
\end{lem}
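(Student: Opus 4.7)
My approach combines Weyl differencing, Cauchy--Schwarz, and a single van der Corput step, broadly following the template of Browning and Heath-Brown~\cite{bhb}. First, I apply Lemma~\ref{L2.1} with $i=j-2$ (the case $j=2$ requiring no differencing at all), and then Cauchy--Schwarz to square the inner exponential sum at the cost of a factor $(X^s/d(\Lambda_{\B y}))^{j-2}$ from the $(j-2)$-fold summation range of the $\B h_l$. This yields
\begin{align*}
    |U_{\B y}(\ba)|^{2^{j-1}} \ll \left(\frac{X^s}{d(\Lambda_{\B y})}\right)^{2^{j-1}-j}\sum_{\B h_1,\ldots,\B h_{j-2}}|S_{\B h}(\ba)|^2,
\end{align*}
where $S_{\B h}(\ba)=\sum_{\bm\xi\in\fr C}e\bigl(\partial_{\B h_{j-2},\ldots,\B h_1}\phi_{\B y}(\ba;\bm\xi)\bigr)$ for some $\fr C\subseteq\fr B_{\B y}(X)$. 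I then apply a van der Corput step with shift vector $\B w\in\fr B_{\B y}(R_j)$ to each $|S_{\B h}(\ba)|^2$; this is legitimate precisely because the hypothesis~\eqref{2.13}, combined with~\eqref{2.4} and the bound $\mu_1\ll d(\Lambda_{\B y})\ll|\B y|^{d-1}$, is exactly what guarantees $R_j\ge 1$. The outcome is an estimate of the shape
\begin{align*}
    |S_{\B h}(\ba)|^2\ll\frac{X^s}{R_j^s}\sum_{\B w\in\fr B_{\B y}(2R_j)}\biggl|\sum_{\bm\xi\in\fr C'}e\bigl(\partial_{\B w,\B h_{j-2},\ldots,\B h_1}\phi_{\B y}(\ba;\bm\xi)\bigr)\biggr|.
\end{align*}

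Next, I analyse the fully differenced phase. The contribution of $\alpha_i\Psi_{\B y}^{(i)}$ to $\partial_{\B w,\B h_{j-2},\ldots,\B h_1}\phi_{\B y}$ is a polynomial in $\bm\xi$ of degree $i-j+1$, which vanishes for $i<j-1$ and is $\bm\xi$-independent when $i=j-1$. By~\eqref{2.7}, the $i=j$ term is linear in $\bm\xi$ and given precisely by $j!\alpha_j\sum_m\xi_m B_{\B y,m}^{(j)}(\B h_1,\ldots,\B h_{j-2},\B w)$, while for each $i>j$ the approximation hypothesis lets me write $\alpha_i=a_i/Q_i+\beta_i$ with $|\beta_i|\ll X^{-i+\omega_i}|\B y|^{D_{d-i}}\mu_1^{d-i+1}/Q_i$. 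Multiplying the phase by $Q_{j+1}$ turns the rational parts into integers (since $Q_i\mid Q_{j+1}$ for every $i\ge j+1$), and the $\beta_i$-contributions, once multiplied by the polynomial values of $\partial_{\B w,\B h_{j-2},\ldots,\B h_1}\Psi_{\B y}^{(i)}$, are $o(1)$ thanks to the size constraints on $\B h_l$ and $\B w$ imposed by the choice~\eqref{2.11} of $R_j$. Hence, up to $o(1)$, the phase reduces modulo one to $j!Q_{j+1}\alpha_j$ times the linear form above, plus a $\bm\xi$-independent constant.

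Finally, the $\bm\xi$-sum factors across coordinates, and each factor evaluates to at most $\min\{X/\mu_m,\|j!Q_{j+1}\alpha_j B_{\B y,m}^{(j)}\|^{-1}\}$ via the standard geometric series bound. Summing over $\B h_1,\ldots,\B h_{j-2}$ and $\B w$ produces exactly $\Upsilon_j$, and rewriting $(X^s/d(\Lambda_{\B y}))^{2^{j-1}-j}\cdot X^s/R_j^s=(X^s/d(\Lambda_{\B y}))^{2^{j-1}-(j-1)}(R_j^s/d(\Lambda_{\B y}))^{-1}$ delivers the claimed bound. The main obstacle is the second step: one must quantitatively verify that every $i>j$ contribution genuinely becomes an integer modulo one after multiplication by $Q_{j+1}$, with the cumulative $\beta_i$-errors dominated by a factor significantly less than one. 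This is what forces both the particular choice~\eqref{2.11} of $R_j$ and the specific form of the rational approximation hypothesis for the higher $\alpha_i$.
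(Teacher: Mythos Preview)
Your overall template is right, but there is a genuine gap in how the factor $Q_{j+1}$ enters the argument. You perform the van der Corput step with a plain shift $\B w\in\fr B_{\B y}(R_j)$ and then propose to ``multiply the phase by $Q_{j+1}$'' in order to render the rational parts $a_i/Q_i$ integral. That is not a legitimate move on an exponential sum: multiplying the argument of $e(\cdot)$ by an integer changes the sum entirely. In the paper the $Q_{j+1}$ arises instead because the van der Corput shift is taken to be $Q_{j+1}\B u$ with $\B u\in\fr B_{\B y}(R_j)$; this is admissible precisely because the hypotheses on the $q_i$ give $R_jQ_{j+1}\ll X$ (not merely $R_j\ge 1$, which is the weaker condition you invoke). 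After the change of variables one obtains the difference $\partial_{Q_{j+1}\B w}\phi_{\B y}$, and by multilinearity every surviving term of degree $i\ge j$ now carries a factor $Q_{j+1}$. In particular the degree-$j$ piece becomes $j!Q_{j+1}\alpha_j\sum_m\xi_m B_{\B y,m}^{(j)}(\B h_1,\ldots,\B h_{j-2},\B w)$, which is what produces the $Q_{j+1}$ in the definition~\eqref{2.12} of $\Upsilon_j$; without the $Q_{j+1}$-shift your linear term has the wrong coefficient.

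The second gap is your treatment of the terms with $i>j$. These are polynomials of degree $i-j+1\ge 2$ in $\bm\xi$, and their \emph{values} on $\fr B_{\B y}(X)$ are certainly not $o(1)$; it is only their \emph{partial derivatives} that are small. After the $Q_{j+1}$-shift one has $\|Q_{j+1}\alpha_i\|\le (Q_{j+1}/Q_i)\|Q_i\alpha_i\|$, and the approximation hypothesis together with~\eqref{2.11} yields $\partial_{\xi_l}(\cdots)\ll X^{-1}\mu_l$. The paper then removes these higher-degree terms by multidimensional partial summation, at no essential cost. Your direct ``$o(1)$'' claim does not hold, and without the $Q_{j+1}$-shift the derivative bound involves $\|\alpha_i\|$ rather than $\|Q_i\alpha_i\|$, which the hypotheses do not control.
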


\begin{pf}
    Suppose first that $j > 2$. In this case, applying Lemma~\ref{L2.1} with $i=j-2$ followed by an application of Cauchy's inequality gives
    \begin{align}\label{2.14}
        | U_{\B y}(\ba) |^{2^{j-1}} \ll \left(\frac{X^s}{d(\Lambda_{\B y})}\right)^{2^{j-1}-j} \dsum{\B h_l \in \fr B_{\B y}(X)}{1 \le l \le j-2} \left|\sum_{\bm \xi \in \fr C_1} e(\partial_{\B h_1, \ldots, \B h_{j-2}} \phi_{\B y}(\ba;\bm \xi) )\right|^2
    \end{align}
    for suitable boxes $\fr C_1 = \fr C_1(\B h_1, \ldots, \B h_{j-2}) \subseteq \fr B_{\B y}(X)$.
    Let now $\B h_1, \ldots, \B h_{j-2}$ be temporarily fixed, and observe that our hypothesis concerning the size of the $q_i$ implies via \eqref{2.9} that $R_j Q_{j+1} \ll X$. Consequently, we have
    \begin{align}\label{2.15}
        &\frac{R_j^s}{d(\Lambda_{\B y})} \sum_{\bm \xi \in  \fr C_1} e(\partial_{\B h_1, \ldots, \B h_{j-2}} \phi_{\B y}(\ba;\bm \xi) )   \ll \sum_{\B u \in \fr B_{\B y}( R_j)}\sum_{\substack{\bm \xi\\ \bm \xi + Q_{j+1}\B u \in \fr C_1}} e(\partial_{\B h_1, \ldots, \B h_{j-2}} \phi_{\B y}(\ba;\bm \xi+ Q_{j+1} \B u) ).
    \end{align}
    
    We denote by $\fr C_2$ the set of $\bm \xi$ for which $\bm \xi + Q_{j+1}\B u \in \fr C_1$ for some $\B u \in \fr B_{\B y}(R_j)$; this box has cardinality $\card \fr C_2 \asymp X^s/d(\Lambda_{\B y})$. Then with another application of Cauchy's inequality one obtains from \eqref{2.15} the bound
    \begin{align*}
        &\left(\frac{R_j^{s}}{d(\Lambda_{\B y})}\right)^2 \left| \sum_{\bm \xi \in \fr C_1}e(\partial_{\B h_1, \ldots, \B h_{j-2}} \phi_{\B y}(\ba;\bm \xi))\right|^2 \\
        & \qquad \ll \frac{X^{s}}{d(\Lambda_{\B y})} \sum_{\bm \xi \in  \fr C_2} \Bigg| \sum_{\substack{ \B u \in \fr B_{\B y}( R_j)\\ \bm \xi + Q_{j+1}\B u \in \fr C_1}} e(\partial_{\B h_1, \ldots, \B h_{j-2}} \phi_{\B y}(\ba;\bm \xi + Q_{j+1} \B u) )\Bigg|^2 \\
        & \qquad \ll \frac{X^{s}}{d(\Lambda_{\B y})} \sum_{\B u, \B v \in \fr B_{\B y}( R_j) } \left| \sum_{\bm \xi}
         e\big(\partial_{\B h_1, \ldots, \B h_{j-2}} (\phi_{\B y}(\ba;\bm \xi + Q_{j+1} \B u) - \phi_{\B y}(\ba;\bm \xi + Q_{j+1} \B v))\big)\right|,
    \end{align*}
    where the inner sum runs over all $\bm \xi$ for which both $\bm \xi + Q_{j+1}\B u$ and $\bm \xi + Q_{j+1}\B v$ lie in $\fr C_1$.
    
    We now make the change of variables $\bm \xi' = \bm \xi + Q_{j+1}\B v$ and $\B w = \B u - \B v$, so that 
    \begin{align*}
        \phi_{\B y}(\ba;\bm \xi + Q_{j+1} \B u) -\phi_{\B y}(\ba;\bm \xi + Q_{j+1} \B v)  =  \partial_{Q_{j+1}\B w} \phi_{\B y}(\ba;\bm \xi').
    \end{align*}
    Thus, upon summing trivially over $\B v$, we have shown that
    \begin{align*}
        \left| \sum_{\bm \xi \in  \fr C_1} e( \partial_{\B h_1, \ldots, \B h_{j-2}}  \phi_{\B y}(\ba;\bm \xi)) \right|^2 
        & \ll \left(\frac{X}{R_j}\right)^s \sum_{\B w \in \fr B_{\B y}(2R_j)}  \sup_{\fr C \subseteq  \fr B_{\B y}(X)}\left|\sum_{\bm \xi \in \fr C} e(\partial_{\B h_1, \ldots, \B h_{j-2}, Q_{j+1}\B w} \phi_{\B y}(\ba;\bm \xi))\right|,
    \end{align*}
    where the supremum is over all coordinate-aligned boxes $\fr C$ inside $\fr B_{\B y}(X)$.   
    Thus, upon combining this bound with \eqref{2.14}, it follows that the exponential sum can be bounded above via 
    \begin{align*}
         | U_{\B y}(\ba) |^{2^{j-1}} \ll \left(\frac{X^s}{d(\Lambda_{\B y})}\right)^{2^{j-1}-(j-1)}  \left(\frac {R_j^s}{d(\Lambda_{\B y})}\right)^{-1} \cal W_j,
    \end{align*}
    where 
    \begin{align}\label{3.x}
    	\cal W_j = \dsum{\B h_l \in \fr B_{\B y}(X)}{1 \le l \le j-2} \sum_{\B w \in \fr B_{\B y}(2 R_j)} 
    	\sup_{\fr C \subseteq \fr B_{\B y}(X)} \Bigg| \sum_{\bm \xi \in \fr C} e\bigg(\partial_{\B h_1, \ldots, \B h_{j-2}, Q_{j+1}\B w} \sum_{i=2}^d \alpha_i \Psi_{\B y}^{(i)}(\bm \xi) \bigg)\Bigg|.
    \end{align}
	An analogous bound is also derived easily in the omitted case when $j=2$ upon interpreting the empty sum over $\B h_l$ and the concomitant differences as void, and noting that the phase factor in $U_{\B y}(\ba)$ disappears in the van der Corput step. 

    The size of the innermost exponential sum in \eqref{3.x} is dominated by the term corresponding to $i=j$. In fact, observe that after $j-1$ differences taken only the terms $\Psi_{\B y}^{(i)}(\bm \xi)$ with $ i \geq j$ occur explicitly in the argument of the exponential, and due to the last $Q_{j+1}$-van der Corput step all of these contain a factor $Q_{j+1}$. Hence whenever $j < d$ and $1 \le l \le s$ one has
    \begin{align*}
        &\frac{\partial}{\partial \xi_l} e\bigg(\partial_{\B h_1, \ldots, \B h_{j-2}, Q_{j+1}\B w} \sum_{i=j+1}^d \alpha_i \Psi_{\B y}^{(i)}(\bm \xi) \bigg) 
        \ll \sum_{i=j+1}^d \left\|Q_{j+1} \alpha_i\right\| X^{i-2} R_j |\B y|^{d-i} \mu_l \\
        &\ll \sum_{i=j+1}^d \left|\frac{Q_{j+1}}{Q_i}\right| \left\|Q_i\alpha_i\right\| X^{i-1-\omega_{j+1}}|\B y|^{-D_{d-j}+d-i } \mu_1^{-(d-j)}\mu_l 
        \ll X^{-1} \mu_l,
    \end{align*}
    where in the last step we used the hypotheses of the lemma. Upon iterating this procedure, one confirms for any subset $\{l_1, \ldots, l_k\} \subseteq \{1, \ldots, s\}$ that 
    \begin{align*}
		&\frac{\partial^k}{\partial \xi_{l_1} \cdots \partial \xi_{l_k}} e\bigg(\partial_{\B h_1, \ldots, \B h_{j-2}, Q_{j+1}\B w} \sum_{i=j+1}^d \alpha_i \Psi_{\B y}^{(i)}(\bm \xi) \bigg) \ll X^{-k} \mu_{l_1} \cdots \mu_{l_k}.
	\end{align*}    
    Suppose that $\fr C = \prod_i [C_i, C_i']$, recalling that $\fr C \subseteq \fr B$ forces $\max\{|C_i|, |C_i'|\} \ll X/\mu_i$ for $ 1 \le i \le s$. Thus, it follows from multidimensional partial summation that
    \begin{align*}
	    &\sum_{\bm \xi \in \fr C}  e\bigg(\partial_{\B h_1, \ldots, \B h_{j-2}, Q_{j+1}\B w} \sum_{i=j}^d \alpha_i \Psi_{\B y}^{(i)}(\bm \xi) \bigg)\\
	    &  \ll \bigg|\sum_{\bm \xi \in \fr C}  e(\partial_{\B h_1, \ldots, \B h_{j-2}, Q_{j+1}\B w} \alpha_j \Psi_{\B y}^{(j)}(\bm \xi))\bigg|
	     + \sum_{l=1}^s \frac{\mu_l}{X}\int_{C_{l}}^{C'_{l}} \bigg|\sum_{\substack{\bm \xi \in \fr C \\ \xi_l \le t}}  e(\partial_{\B h_1, \ldots, \B h_{j-2}, Q_{j+1}\B w} \alpha_j \Psi_{\B y}^{(j)}(\bm \xi))\bigg| \D t  \\
	    &\qquad+ \ldots + \frac{\mu_1 \cdots \mu_s}{X^s} \int_{\fr C} \bigg|\sum_{\substack{\bm \xi \in \fr C \\ \xi_l \le t_l\, (1 \le l \le s)}}  e(\partial_{\B h_1, \ldots, \B h_{j-2}, Q_{j+1}\B w} \alpha_j \Psi_{\B y}^{(j)}(\bm \xi))\bigg| \D \B t\\
	    & \ll \sup_{\fr C' \subseteq \fr C}\bigg|\sum_{\bm \xi \in \fr C'} e(\partial_{\B h_1, \ldots, \B h_{j-2}, Q_{j+1}\B w}\alpha_j \Psi_{\B y}^{(j)}(\bm \xi))\bigg|,
    \end{align*}
    where the supremum is over all coordinate-aligned boxes $\fr C' \subseteq \fr C$. 
    Thus, we discern that the dominant contribution arises indeed from the term of degree $j$, so that 
    \begin{align*}
        \cal W_j \ll  \dsum{\B h_l \in \fr B_{\B y}(X)}{1 \le l \le j-2} \sum_{\B w \in \fr B_{\B y}(2R_j)} \sup_{\fr C \subseteq \fr B_{\B y}(X)}  \left|\sum_{\bm \xi \in \fr C} e\bigg(\alpha_j \partial_{\B h_1, \ldots, \B h_{j-2}, Q_{j+1} \B w}  \Psi_{\B y}^{(j)}(\bm \xi) \bigg)\right|.
    \end{align*}
    
    The argument of the exponential is now linear in $\bm \xi$. 
    Since $\fr C \subseteq \fr B_{\B y}(X)$ is a box oriented along the coordinate axes, upon recalling the definition \eqref{2.7} the standard estimate on linear exponential sums yields the bound
    \begin{align*}
    	&\left|\sum_{\bm \xi \in \fr C} e\bigg(\alpha_j \partial_{\B h_1, \ldots, \B h_{j-2}, Q_{j+1} \B w}  \Psi_{\B y}^{(j)}(\bm \xi) \bigg)\right| \\
    	& \ll \prod_{m=1}^s \min\Bigg\{\frac{X}{\mu_m}, \bigg\| j!Q_{j+1}\alpha_{j} B_{\B y,m}^{(j)}(\B h_1, \ldots, \B h_{j-2}, \B w) \bigg\|^{-1} \Bigg\}. 
    \end{align*}
    Thus we have shown that $\cal W_j \ll \Upsilon_j$ and the proof of the lemma is complete. 
\end{pf}

\section{Geometry of numbers and a nonsingularity condition}

The next step is to estimate the quantity $\Upsilon_j$. 
For positive real numbers $U,V,W$ set
\begin{align}\label{3.1}
    N_{j,\B y}(U,V; W)= \card& \bigg\{ \B h_1, \ldots, \B h_{j-2} \in \fr B_{\B y}(U), \B z \in \fr B_{\B y}(V),  \nonumber \\
     & \quad \Big\|j!Q_{j+1}\alpha_{j} B_{\B y, m}^{(j)}(\B h_1, \ldots, \B h_{j-2}, \B z) \Big\| < \frac{\mu_m}{W}  \quad (1 \le m \le s)\bigg\}.
\end{align}

In this notation, standard arguments similar to those in the proof of \cite[Lemma~13.2]{dav} show that for any fixed $\theta_{j+1}, \ldots, \theta_d$ one has
\begin{align}\label{3.2}
	\Upsilon_j \ll \left(\frac{X^s}{d(\Lambda_{\B y})}\right)^{1+\eps} N_{j,\B y}(X,R_j; X).
\end{align}
Our next goal is to bound the size of $N_{j,\B y}(X,R_j; X)$. For this purpose we need a  generalisation of Davenport's lemma on the geometry of numbers (see \cite[Lemma~12.6]{dav}). 
Let $A_{k, m}>1$ be real numbers for $1 \le k \le j-1$, $1 \le m \le s$, and write 
\begin{align*}
	\scr A_k = \prod_{m=1}^s [-A_{k,m}, A_{k,m}] \qquad (1 \le k \le j-1). 
\end{align*}
Let further $0 < Z_k \le 1$ for $1 \le k \le j-1$. For any $l$ with $1 \le l \le j-1$ write $\cal R_l(Z)$ for the number of $\bm \xi_1, \ldots, \bm \xi_{j-1} \in \Z^{s}$ such that $\bm \xi_k \in Z_k \scr A_k$ for all $1 \le k \le j-1$  with $k \neq l$ and $\bm \xi_l \in Z \scr A_l$, having the property that  
\begin{align*}
	\left\|j!Q_{j+1}\alpha_{j} B_{\B y, m}^{(j)}(\bm \xi_1, \ldots,  \bm \xi_{j-1}) \right\| \le  Z  A_{l,m}^{-1} \qquad (1 \le m \le s).
\end{align*}
In this notation, Schindler and Sofos \cite{SS} give the following variant of Davenport's result. 
\begin{lem}[Lemma~2.4 in \cite{SS}]\label{L3.1}
	Fix $Z_1, \ldots, Z_{j-1} \in (0,1]$ and some $l$ with $1 \le l \le j-1$. 
	For any $Z$, $Z'$ in the range $0 < Z' \le Z \le 1$ one has 
	\begin{align*}
		\cal R_l(Z) \ll (Z / Z')^s \cal R_l(Z').  
	\end{align*}
\end{lem}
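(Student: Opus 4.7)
The plan is to reduce the bound on $\cal R_l(Z)$ to a pointwise Davenport-type shrinking estimate applied to the innermost variable $\bm \xi_l$ alone. By the symmetric $(j-1)$-linearity of $B^{(j)}_{\B y,m}$, for any fixed choice of $(\bm \xi_k)_{k \ne l}$ the map
\begin{align*}
    \bm \xi_l \longmapsto L_m(\bm \xi_l) := j!\, Q_{j+1} \alpha_j B^{(j)}_{\B y, m}(\bm \xi_1, \ldots, \bm \xi_{j-1})
\end{align*}
is a linear form on $\R^s$. Writing
\begin{align*}
    M(Z) = \#\{\bm \xi_l \in Z\scr A_l \cap \Z^s : \|L_m(\bm \xi_l)\| \le Z A_{l,m}^{-1}\ \text{for all } 1 \le m \le s\},
\end{align*}
one has $\cal R_l(Z) = \sum_{(\bm \xi_k)_{k \ne l}} M(Z)$, and it therefore suffices to establish the pointwise comparison $M(Z) \ll (Z/Z')^s M(Z')$ for $0 < Z' \le Z \le 1$, as the full result then follows by summation over the fixed data.

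To prove the pointwise bound, I would encode $M(Z)$ as a lattice-point count in $\R^{2s}$. Let $\Lambda = \{(\bm \xi, L(\bm \xi) + \B z) : \bm \xi, \B z \in \Z^s\}$ be the unimodular shear lattice associated to $L = (L_1, \ldots, L_s)$, and let $\scr C = \prod_j [-A_{l,j}, A_{l,j}] \times \prod_m [-A_{l,m}^{-1}, A_{l,m}^{-1}]$, a symmetric convex body of volume $2^{2s}$. Since $A_{l,m}>1$, whenever $Z \le 1$ each $\bm \xi_l$ counted by $M(Z)$ admits only $O(1)$ integer lifts of $L(\bm \xi_l)$ satisfying the mod-$1$ constraint, so that $M(Z) \asymp \#(\Lambda \cap Z\scr C)$. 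Minkowski's second theorem then yields
\begin{align*}
    \#(\Lambda \cap Z\scr C) \asymp \prod_{i=1}^{2s} \max(1, Z/\sigma_i) \qquad \text{with} \qquad \prod_{i=1}^{2s} \sigma_i \asymp 1,
\end{align*}
where $\sigma_1 \le \cdots \le \sigma_{2s}$ are the successive minima of $\scr C$ relative to $\Lambda$.

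The desired scaling estimate $M(Z)/M(Z') \ll (Z/Z')^s$ reduces to comparing these product formulae at $Z$ and $Z'$. The trivial bound on each individual ratio factor gives only $(Z/Z')^{2s}$; the improvement to $(Z/Z')^s$ leverages the volume constraint $\prod \sigma_i \asymp 1$ together with the normalisation $Z, Z' \le 1$, which ensures that no more than $s$ of the minima can simultaneously be small enough to each contribute a factor of $Z/Z'$ without the complementary minima growing enough to neutralise the remainder. Summing the pointwise estimate over $(\bm \xi_k)_{k \ne l}$ concludes the proof. The main technical obstacle lies in this last case analysis, where the strict inequality $A_{l,m} > 1$ enters essentially; this refined bookkeeping of minima is carried out in \cite[Lemma~2.4]{SS}, extending Davenport's classical argument \cite[Lemma~12.6]{dav}.
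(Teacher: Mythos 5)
Your reduction to the single-block count $M(Z)$ is sound (the outer variables range over boxes $Z_k\scr A_k$ that do not move with $Z$, and the tolerance $ZA_{l,m}^{-1}$ is correctly absorbed into $M$), and the passage to the shear lattice $\Lambda$ and the product formula $\#(\Lambda\cap Z\scr C)\asymp\prod_i\max(1,Z/\sigma_i)$ via Minkowski's second theorem is the standard route (the paper itself gives no proof here, deferring entirely to \cite[Lemma~2.4]{SS}, which in turn runs along the lines of \cite[Lemmata~12.4--12.6]{dav}). But the step where you actually gain the exponent $s$ over the trivial $2s$ is wrong as stated. You claim that the volume constraint $\prod_{i=1}^{2s}\sigma_i\asymp 1$ together with $Z,Z'\le 1$ forces at most $s$ of the successive minima to be small. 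It does not: nothing prevents, say, $\sigma_1=\cdots=\sigma_{s+1}=\eps$ with the remaining minima of size $\eps^{-(s+1)/(s-1)}$, which is compatible with $\prod\sigma_i=1$ yet would give $s+1$ factors of $Z/Z'$. The condition $A_{l,m}>1$ is likewise irrelevant to this point; it only guarantees the $O(1)$ integer lifts and that $M(Z')\ge 1$.

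The ingredient you are missing is the duality relation $\sigma_i\,\sigma_{2s+1-i}\gg 1$ between the successive minima, which is what genuinely caps the number of small minima at $s$ (if $\sigma_{s+1}$ were small, then $\sigma_s\ge\sigma_{s+1}^{-1}\gg 1$ would contradict the ordering). This duality holds because $\Lambda$ is essentially self-dual with respect to $\scr C$, and that in turn uses two structural facts you record but never exploit: the matrix $(\lambda_{mi})=\bigl(j!Q_{j+1}\alpha_j B^{(j)}_{\B y,m}(\ldots,\B e_i,\ldots)\bigr)$ of your linear forms is \emph{symmetric}, because $B^{(j)}_{\B y,m}(\B h_1,\ldots,\B h_{j-1})=\Phi$-type data evaluated with $\B e_m$ in one slot, so symmetry of the underlying multilinear form gives $\lambda_{mi}=\lambda_{im}$; and the box side-lengths $A_{l,i}$ are paired reciprocally with the approximation tolerances $A_{l,i}^{-1}$, so the diagonal rescalings on the two blocks of $\R^{2s}$ are inverse to one another. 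Without the symmetry the argument genuinely fails and only yields $(Z/Z')^{2s}$; this is Davenport's Lemma~12.5 in \cite{dav} and is precisely the content that Schindler and Sofos must re-verify for lopsided boxes. Since you end by outsourcing "the refined bookkeeping" to \cite[Lemma~2.4]{SS} anyway, the sketch is not self-contained at the one point where it purports to explain the mechanism, and the explanation it does offer is incorrect.
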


Suppose that $\theta_{j+1}, \ldots, \theta_d$ are fixed in such a way that \eqref{2.13} is satisfied. For any $\theta_j$ satisfying
\begin{align}\label{3.3}
    0< \theta_j \le 1-\omega_{j+1}-\psi (D_{d-j}+(d-1)(d-j))
\end{align}
and all $1 \le m \le s$ we set
\begin{align*}
	A_{k,m} &= (X/ \mu_m)X^{\textstyle{\frac{(1-\theta_j)(k-1)}{2}}},    &   
	Z_k &=X^{-\textstyle{\frac{(1-\theta_j)(k-1)}{2}}},    &    
	Z_k' &= X^{- \textstyle{\frac{ (1-\theta_j)(k+1)}{2}}}
\end{align*}
for $1 \le k \le j-2$, and
\begin{align*}\begin{gathered}
	A_{j-1,m}=\frac{(R_jX)^{1/2}}{\mu_i}X^{\textstyle{\frac{(1-\theta_j)(j-2)}{2}}}, \qquad 
	Z_{j-1} =\left(\frac{R_j}{X}\right)^{1/2} X^{-\textstyle{\frac{(1-\theta_j)(j-2)}{2}}},    \\
	Z_{j-1}' = \left(\frac{X}{R_j}\right)^{1/2} X^{-\textstyle{\frac{(1-\theta_j)j}{2}}}.
\end{gathered}\end{align*}
Thus $0 < Z_{k}' < Z_{k} \le 1$ for all $k$, and one has
\begin{align*}\begin{gathered}
	A_{k,m} Z_k = X/\mu_m, \qquad  A_{k,m} Z_k' = X^{\theta_j}/\mu_m, \qquad    Z_k/Z_k' = X^{1-\theta_j}, \\
	Z_k'/A_{k,m} = \mu_m X^{-1-(1-\theta_j)k} =  Z_{k+1}/A_{k+1,m}
\end{gathered}\end{align*}
for $1 \le k \le j-2$, and
\begin{align*}\begin{gathered}
	A_{j-1,m} Z_{j-1} = R_j/\mu_m, \qquad  A_{j-1,m}Z'_{j-1} = X^{\theta_j}/\mu_m, \qquad  Z_{j-1}/Z_{j-1}'= R_jX^{-\theta_j},\\
    Z_{j-1}/A_{j-1,m}=\mu_mX^{-1-(1-\theta_j)(j-2)}, \qquad Z'_{j-1}/A_{j-1,m} = \frac{\mu_m}{R_j} X^{-(j-1)(1-\theta_j)}.
\end{gathered}\end{align*}
Note here that \eqref{3.3} implies via \eqref{2.11} and \eqref{2.4} that $R_j> X^{\theta_j}$. Applying Lemma~\ref{L3.1} consecutively for the indices $k=1, \ldots, j-1$ shows that
\begin{align*}
	N_{j,\B y}(X,R_j; X) \ll X^{(j-2)(1-\theta_j)s} (R_j X^{-\theta_j})^s N_{j,\B y}(X^{\theta_j}, X^{\theta_j}; X^{(j-1)(1-\theta_j)}R_j),
\end{align*}
and hence we infer from \eqref{3.2} that
\begin{align}\label{3.4}
	\Upsilon_j \ll \frac{X^{(j-1)(1 -\theta_j)s + \eps}R_j^s}{d(\Lambda_{\B y})}N_{j,\B y}(X^{\theta_j}, X^{\theta_j};  X^{(j-1)(1-\theta_j)}R_j).
\end{align}

If we now make the assumption that $|T_{\B y}(\ba)| \gg (X^{s}/d(\Lambda_{\B y}))X^{-k_j \theta_j}$ for some $k_j>0$ and some $\theta_j$ satisfying \eqref{3.3}, we obtain from Lemma~\ref{L2.2} together with \eqref{2.6} and \eqref{3.4} the bound
\begin{align*}
	N_{j,\B y}(X^{\theta_j}, X^{\theta_j}; X^{-(j-1)(1-\theta_j)}R_j) \gg \left(\frac{X^{\theta_js}}{d(\Lambda_{\B y})}\right)^{j-1} X^{ -2^{j-1}k_j \theta_j - \eps}.
\end{align*}
The diophantine approximation condition that is implicit in \eqref{3.1} is satisfied either if the functions $B_{\B y, m}^{(j)}$ ($1 \le m \le s$) tend to vanish for geometric reasons, or if $\alpha_j$ has a good approximation in the rational numbers. Suppose that $j! B_{\B y, m}^{(j)}(\B h_1, \ldots, \B h_{j-1})$ is non-zero for some $m$ and some choice of $\B h_1, \ldots, \B h_{j-1}$ counted by $N_{j,\B y}(X^{\theta_j}, X^{\theta_j}; X^{(j-1)(1-\theta_j)}R_j)$, and denote its absolute value by $q_j$. Then $q_j \ll X^{\nu_j}|\B y|^{d-j}\mu_1$, and the approximation condition implied by the definition \eqref{3.1} takes the shape
\begin{align*}
  	\| \alpha_{j}q_jQ_{j+1}\| \ll \mu_1  X^{(j-1)(\theta_j-1)}R_j^{-1} \ll X^{-j+\omega_j}|\B y|^{D_{d-j}}\mu_1^{d-j+1}.
\end{align*}

We summarise the conclusions of our arguments in a lemma.
\begin{lem}\label{L3.2}
    Let $j \in \{2, \ldots, d\}$ be fixed. Recalling \eqref{2.8}, when $j < d$ assume that $\theta_{j+1}, \ldots, \theta_d$ are such that \eqref{2.13} is satisfied. Suppose further that for any $i$ with $j < i \le d$ there are positive integers $q_i \ll  X^{\nu_i}|\B y|^{d-i}\mu_1$ with the property that, in view of \eqref{2.9}, one has
    \begin{align*}
    	\big\|Q_i \alpha_i\big\| \ll X^{-i + \omega_i}|\B y|^{D_{d-i}}\mu_1^{d-i+1}.
    \end{align*}
    Finally, take $k_j>0$ and $\theta_j>0$ to be parameters, where $\theta_j$ satisfies \eqref{3.3}. For any $\ba \in [0,1)^{d-1}$ one of the following holds.
    \begin{enumerate}[(A)]
        \item\label{it:minor}
            The exponential sum is bounded by
            \begin{align*}
                |T_{\B y}(\ba)| \ll \frac{X^{s}}{d(\Lambda_{\B y})} X^{-k_j \theta_j+\eps}.
            \end{align*}
        \item\label{it:major}
            There exist integers $a_j$ and $q_j$ satisfying $1 \le q_j \ll X^{\nu_j}|\B y|^{d-j}\mu_1$ and $0 \le a_j \le Q_j $ such that
            \begin{align*}
                |Q_j\alpha_j - a_j| \ll X^{-j + \omega_j}|\B y|^{D_{d-j}}\mu_1^{d-j+1}.
            \end{align*}
        \item\label{it:sing}
        	The number of points $\bm \xi_1, \ldots, \bm \xi_{j-1} \in \fr B_{\B y}(X^{\theta_j})$ for which $B_{\B y,m}^{(j)}(\bm \xi_1, \ldots, \bm \xi_{j-1}) = 0$ for $1 \le m \le s$ is at least of order $(X^{\theta_js}/d(\Lambda_{\B y}))^{j-1}X^{-2^{j-1}k_j\theta_j - \eps}$.
    \end{enumerate}
\end{lem}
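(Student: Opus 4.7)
The plan is to derive the lemma by unpacking the estimates from Section~\ref{S2} and combining them with the geometry-of-numbers input from Lemma~\ref{L3.1}. Assuming that alternative~\eqref{it:minor} fails, the strategy is to convert the resulting lower bound on $T_{\B y}(\ba)$ into a lower bound on a lattice-point counting function, and then to extract from it either the Diophantine approximation in~\eqref{it:major} or the singular locus in~\eqref{it:sing}. First I would assume $|T_{\B y}(\ba)| \gg (X^s/d(\Lambda_{\B y}))X^{-k_j\theta_j+\eps}$, transfer this to a comparable lower bound on $U_{\B y}(\ba)$ via~\eqref{2.6}, and then feed it into Lemma~\ref{L2.2} with the hypothesised approximations for $\alpha_i$ ($i>j$) to get a lower bound on $\Upsilon_j$. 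The estimate~\eqref{3.2} then converts this into a lower bound on $N_{j,\B y}(X,R_j;X)$.

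Next I would apply Lemma~\ref{L3.1} iteratively for $k=1,\ldots,j-1$, with the explicit choices of $A_{k,m}$, $Z_k$, $Z_k'$ tabulated above. The matching identities $Z_k'/A_{k,m} = Z_{k+1}/A_{k+1,m}$ are engineered precisely so that the compressed box at step $k$ is compatible with the full box at step $k+1$; note that condition~\eqref{3.3}, together with~\eqref{2.4} and~\eqref{2.11}, ensures $R_j > X^{\theta_j}$ and hence $Z_k \le 1$ throughout. Chaining these applications, \eqref{3.4} delivers the lower bound
\begin{align*}
    N_{j,\B y}(X^{\theta_j}, X^{\theta_j}; X^{(j-1)(1-\theta_j)}R_j) \gg \left(\frac{X^{\theta_j s}}{d(\Lambda_{\B y})}\right)^{j-1} X^{-2^{j-1}k_j\theta_j - \eps}.
\end{align*}

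Finally I would perform a dichotomy on the tuples $(\bm\xi_1,\ldots,\bm\xi_{j-1})$ counted above. If for every such tuple one has $B_{\B y,m}^{(j)}(\bm\xi_1,\ldots,\bm\xi_{j-1}) = 0$ for all $m$, then~\eqref{it:sing} follows by relabelling. Otherwise some tuple yields a non-zero integer $q_j := |j!B_{\B y,m}^{(j)}(\bm\xi_1,\ldots,\bm\xi_{j-1})|$, which automatically satisfies $q_j \ll X^{\nu_j}|\B y|^{d-j}\mu_1$; then the proximity condition built into~\eqref{3.1}, simplified using $R_jQ_{j+1} \ll X$ and the identity $\omega_j = \omega_{j+1} + \nu_j$ from~\eqref{2.8}, produces the Diophantine approximation in~\eqref{it:major}. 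The main technical obstacle is the careful book-keeping of the exponents of $X$, $\mu_1$ and $|\B y|$ through the chain of applications of Lemma~\ref{L3.1}, and verifying that the approximation one extracts in the non-vanishing case has precisely the exponents demanded by~\eqref{it:major}; everything else is a mechanical consequence of the structure set up in Section~\ref{S2}.
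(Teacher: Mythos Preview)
Your proposal is correct and follows essentially the same route as the paper. The paper's argument, laid out in the discussion immediately preceding the statement of the lemma, proceeds exactly as you describe: assume the negation of~\eqref{it:minor}, pass from $T_{\B y}$ to $U_{\B y}$ via~\eqref{2.6}, invoke Lemma~\ref{L2.2} and~\eqref{3.2} to reach $N_{j,\B y}(X,R_j;X)$, apply Lemma~\ref{L3.1} iteratively with the tabulated parameters to obtain~\eqref{3.4} and hence the lower bound on $N_{j,\B y}(X^{\theta_j},X^{\theta_j};X^{(j-1)(1-\theta_j)}R_j)$, and finally split into the vanishing case~\eqref{it:sing} and the non-vanishing case~\eqref{it:major}. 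One minor remark: in your final step the simplification of the proximity bound to $\|Q_j\alpha_j\| \ll X^{-j+\omega_j}|\B y|^{D_{d-j}}\mu_1^{d-j+1}$ comes directly from substituting the definition~\eqref{2.11} of $R_j$ and the identity $\omega_j=\omega_{j+1}+\nu_j$; the fact $R_jQ_{j+1}\ll X$ is not needed here (it was used earlier, inside the proof of Lemma~\ref{L2.2}).
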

Our next goal is to interpret the third case geometrically. Write $\cal M_j(\B y)$ for the variety containing all $(\bm \xi_1, \ldots, \bm \xi_{j-1}) \in \A_{\C}^{(j-1)s}$ that satisfy 
\begin{align*}
	B_{\B y,m}^{(j)}(\bm \xi_1, \ldots, \bm \xi_{j-1}) = 0 \quad (1 \le m \le s).
\end{align*}
It is clear (for instance from Theorem~3.1 in \cite{browning}) that for any positive real number $Z$ one has
\begin{align*}
    \card \left\{(\B h_1, \ldots, \B h_{j-1}) \in \Z^{(j-1)s} \cap \cal M_j(\B y):\, |\B h_i| \le Z \;(1 \le i \le j-1) \right\} \ll Z^{\dim \cal M_j(\B y)}.
\end{align*}
As in the work of Schindler and Sofos \cite{SS} we cover the domain $( {\fr B}_{\B y}(X^{\theta_j}))^{j-1}$ by at most $O(\mu_1^{s(j-1)}/d(\Lambda_{\B y})^{j-1})$ translates of the box $[-X^{\theta_j}/\mu_1,X^{\theta_j}/\mu_1]^{s(j-1)}$. Suppose now that 
\begin{align}\label{3.5}
	\psi< \varpi k_j \theta_j
\end{align}
for all $j$ and a suitably small parameter $\varpi$, so that $\mu_1 \ll X^{(d-1)\varpi k_j\theta_j}$. 
Since \cite[Theorem~3.1]{browning} allows for translations, we infer that 
\begin{align*}
	&\card \left\{ (\bm \xi_1, \ldots, \bm \xi_{j-1}) \in ({\fr B}_{\B y}(X^{\theta_j}))^{j-1} \cap \cal M_j(\B y) \right\}\\ 
	&\qquad\ll \left(\frac{\mu_1^{s}}{d(\Lambda_{\B y})}\right)^{j-1} \left( \frac{X^{\theta_j}}{\mu_1} \right)^{\dim \cal M_j(\B y)}\\
	&\qquad\ll \left(\frac{X^{(d-1)\varpi k_j\theta_j s}}{d(\Lambda_{\B y})}\right)^{j-1} \left( X^{\theta_j(1-(d-1)\varpi k_j)} \right)^{\dim \cal M_j(\B y)}.
\end{align*}
We thus discern that whenever we are in case \eqref{it:sing} of Lemma~\ref{L3.2}, we must have the bound 
\begin{align*}
	\left(\frac{X^{\theta_js}}{d(\Lambda_{\B y})}\right)^{j-1} X^{ -2^{j-1}k_j\theta_j - \eps} \ll \left(\frac{X^{(d-1)\varpi k_j\theta_j s}}{d(\Lambda_{\B y})}\right)^{j-1} \left( X^{\theta_j(1-(d-1)\varpi k_j)} \right)^{\dim \cal M_j(\B y)},
\end{align*}
which simplifies to 
\begin{align*}
	(X^{\theta_j(1-(d-1)\varpi k_j )})^{(j-1)s-\dim \cal M_j(\B y)} \ll X^{2^{j-1}k_j \theta_j - \eps}.
\end{align*}
It follows that for any $j$, the case \eqref{it:sing} of Lemma~\ref{L3.2} is excluded  when
\begin{align}\label{3.6}
   	(j-1)s-\dim \cal M_j(\B y) > \frac{2^{j-1}k_j}{1-(d-1)\varpi k_j }.
\end{align}
We thus want to choose our parameters in such a way that \eqref{3.6} holds for all $2 \le j \le d$. 

We begin by observing that $\cal M_{j-1}(\B y)$ is obtained from $\cal M_j(\B y)$ by intersecting with the $s$ hyperplanes defined by $\B h_{j-1}=\B y$. This gives the inequality $\dim \cal M_{j-1}(\B y) \ge \dim \cal M_j(\B y) - s$ for all $j$ with $3 \le j \le d$, and upon solving the recursion we deduce that 
\begin{align}\label{3.7}
 	\dim \cal M_j(\B y)\le (j-2)s +  \dim \cal M_2(\B y) \qquad (2 \le j \le d).
\end{align}
It thus suffices to understand the set $\cal M_2(\B y)$. 



\begin{lem}\label{L3.3}
	Let $\B y \in \cal V$. We have $\cal M_2(\B y) = \langle \ker H_{\B y}, \B y\rangle$ and thus 
	\begin{align*}
		\dim \cal M_2(\B y) \le \dim \ker H_{\B y} +1.
	\end{align*}
\end{lem}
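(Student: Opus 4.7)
\smallskip

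The plan is to unwind the definition of $\cal M_2(\B y)$ back to a statement about the multilinear form $\Phi$, and then translate that to a statement about the Hessian $H_{\B y}$ using Euler's identity.

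First, specialising the defining relation \eqref{2.7} to $j=2$ gives the bilinear identity
\begin{align*}
\Psi^{(2)}_{\B y}(\bm \xi,\B h) = \sum_{m=1}^{s} \xi_m B^{(2)}_{\B y,m}(\B h).
\end{align*}
Consequently, $\B h \in \cal M_2(\B y)$ if and only if $\Psi^{(2)}_{\B y}(\bm \xi,\B h)=0$ for every $\bm \xi \in \A^s_{\C}$. Passing from lattice coordinates to $\Lambda_{\B y}\otimes\C$ via the basis $\cal B$, and writing $\B x_{\bm\xi}, \B x_{\B h} \in \Lambda_{\B y}\otimes\C$ for the vectors with coordinates $\bm\xi$ and $\B h$ respectively, this becomes $\Phi(\B x_{\bm\xi}, \B x_{\B h}, \B y, \ldots, \B y) = 0$ for every $\B x_{\bm\xi}\in\Lambda_{\B y}\otimes\C$.

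Next I would invoke the standard Hessian formula for a homogeneous form of degree $d$, namely
\begin{align*}
\B u^{T} H_{\B y} \B v \;=\; d(d-1)\,\Phi(\B u,\B v,\B y,\ldots,\B y),
\end{align*}
so that membership in $\cal M_2(\B y)$ is equivalent to the linear orthogonality condition $\B u^{T} H_{\B y} \B x_{\B h} = 0$ for all $\B u \in \Lambda_{\B y}\otimes\C$. Since $\Lambda_{\B y}$ is the hyperplane $\nabla F(\B y)^{\perp}$, this forces $H_{\B y}\B x_{\B h}$ to be a scalar multiple of $\nabla F(\B y)$. Euler's identity $H_{\B y}\B y = (d-1)\nabla F(\B y)$ then shows that $H_{\B y}\B x_{\B h} = \lambda H_{\B y}\B y$ for some $\lambda \in \C$, whence $\B x_{\B h} - \lambda \B y \in \ker H_{\B y}$, i.e.\ $\B x_{\B h} \in \langle \ker H_{\B y}, \B y\rangle$.

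For the reverse inclusion I would check directly that $\B y \in \Lambda_{\B y}$ (by Euler, since $F(\B y)=0$) and that $\ker H_{\B y}\subseteq \Lambda_{\B y}$ (since $\nabla F(\B y)\cdot \B v = \frac{1}{d-1}\B y^{T}H_{\B y}\B v = 0$ for $\B v \in \ker H_{\B y}$), so that the subspace $\langle \ker H_{\B y},\B y\rangle$ really does lie inside $\Lambda_{\B y}$ and corresponds under $\cal B$ to a subspace of $\A^{s}_{\C}$. For any such $\B x_{\B h} = \B v + \mu \B y$ with $\B v\in\ker H_{\B y}$, one has $H_{\B y}\B x_{\B h} = \mu(d-1)\nabla F(\B y)$, which is orthogonal to all of $\Lambda_{\B y}$, proving the converse inclusion. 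The dimension bound $\dim \cal M_2(\B y) \le \dim \ker H_{\B y} + 1$ is then immediate (with equality unless $\B y \in \ker H_{\B y}$).

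The only mildly delicate point is the identification between $\cal M_2(\B y)\subseteq \A^s_{\C}$ and a subspace of $\A^n_{\C}$ via the lattice basis; this must be set up cleanly so that the Hessian computation, which naturally lives on $\A^n_{\C}$, can be transferred back. Beyond that bookkeeping, the argument is essentially linear algebra and Euler's identity.
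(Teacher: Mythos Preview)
Your proposal is correct and follows essentially the same route as the paper's proof: both identify $\cal M_2(\B y)$ with the set of $\B h$ in the hyperplane $\Phi_{\B y}^{(1)}(\B h)=0$ for which $(H_{\B y}\B h)\cdot\B x=0$ whenever $(H_{\B y}\B y)\cdot\B x=0$, and then deduce that $H_{\B y}\B h$ must be proportional to $H_{\B y}\B y$. Your write-up is in fact slightly more careful than the paper's, in that you invoke Euler's identity explicitly, verify the reverse inclusion $\langle \ker H_{\B y},\B y\rangle\subseteq\Lambda_{\B y}\otimes\C$, and flag the bookkeeping involved in passing between $\A_{\C}^s$ and $\A_{\C}^n$ via the lattice basis $\cal B$.
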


\begin{pf}
	It follows from the definition of $\Psi^{(2)}$ that $\cal M_2(\B y)$ is given by the set of all $\B h \in \A_{\C}^n$ satisfying $\Phi_{\B y}^{(1)}(\B h)=0$ and $(H_{\B y}\B h) \cdot \B x = 0$ for all $\B x$ having $\Phi_{\B y}^{(1)}(\B x)=0$. In particular, $\B h$ has to be such that $(H_{\B y} \B h) \cdot \B x = 0$ whenever $(H_{\B y} \B y) \cdot \B x = 0$. This is clearly satisfied if $\B h \in \ker H_{\B y}$, as then the first equation holds trivially.  On the other hand, if $\B h \not\in \ker H_{\B y}$ both equations define hyperplanes which coincide precisely if the vectors $H_{\B y}\B h$ and $H_{\B y}\B y$ are proportional, or in other words, $\B h - \alpha \B y \in \ker H_{\B y}$ for some scalar $\alpha$. Rewriting gives $\B h \in \langle \ker H_{\B y}, \B y \rangle$, and the statement follows.
\end{pf}

We now quantify the set of points $\B y$ for which $\ker H_{\B y}$ is large. For a natural number $\rho$ set 
\begin{align*}
	\cal A(\rho) = \{\B y \in \A_{\C}^n : \dim \ker H_{\B y} \le \rho-1 \}
\end{align*}
and 
\begin{align*}
	\cal B(\rho) = \{\B y \in \A_{\C}^n: \dim \ker H_{\B y} \ge \rho \},
\end{align*}
so that the sets $\cal A(\rho)$ and $\cal B(\rho)$ are complementary. Observe also that with this definition we have $\cal V^*_{2, \rho} = \cal B(\rho) \cap \cal V$ and $\cal V_{2, \rho} = \cal A(\rho) \cap \cal V$. Suppose that $\B y \in \cal A(\rho)$ for some natural number $\rho$. It then follows from \eqref{3.6} and \eqref{3.7} via Lemma~\ref{L3.3} that case \eqref{it:sing} of Lemma~\ref{L3.2} is excluded whenever the inequalities
\begin{align}\label{3.8}
	s - \rho > \frac{2^{j-1}k_j}{1-(d-1)\varpi k_j} \qquad (2 \le j \le d)
\end{align}
are satisfied. 

To conclude the section, we record the bound 
\begin{align}\label{3.9}
	 \card \{ \B y \in \cal V^*_{2,\rho}(\Z): |\B y| \le Y\} &\le \card \{ \B y \in \cal B(\rho) \cap \Z^n: |\B y| \le Y\} \ll Y^{n-\rho},
\end{align}
which follows from the argument of \cite[Lemma~2]{HB10} via Theorem~3.1 in \cite{browning}.

\section{Major and Minor arcs}\label{S4}

Lemma~\ref{L3.2} is designed to inductively define a partition into major and minor arcs for the entries $\alpha_j$ of $\ba$ as $j$ runs from $d$ to $2$. The size of the major arcs obtained in this way is controlled by the parameters $\theta_j$ and $k_j$ which it is now our job to choose optimally. Throughout this section and the next we will assume that $\B y \in \cal A(\rho)$ for some parameter $\rho$. Also, we will work on the assumption that \eqref{3.8} is satisfied, so that the singular case in Lemma~\ref{L3.2} is excluded.

Given an index~$j$ and parameters $\theta_{j}, \ldots, \theta_d$, we define the major arcs $\fr M_{\B y}(X; \theta_j, \ldots, \theta_d)$ to be the set of all $\bm \alpha \in [0,1)^{d-1}$ for which there exist integers $q_j, \ldots, q_d$ and $a_j, \ldots, a_d$  having the property that for all $i \in \{j, j+1, \ldots, d\}$ one has
\begin{equation}\label{4.1}
	\begin{gathered}
	1 \le q_i \le c_jX^{\nu_i}|\B y|^{d-i}\mu_1, \qquad 0 \le  a_i \le Q_i,   \\
	|\alpha_iQ_i-a_i| \le c_jX^{-i+\omega_i}|\B y|^{D_{d-i}} \mu_1^{d-i+1}
	\end{gathered}
\end{equation}
for some suitable constant~$c_j$. Here, we implicitly used the notation \eqref{2.8} and \eqref{2.9}. Let 
\begin{align*}
	\fr m_{\B y}(X; \theta_j, \ldots, \theta_d) =  [0,1)^{d-1} \setminus \fr M_{\B y}(X; \theta_j, \ldots, \theta_d)
\end{align*} 
be the corresponding minor arcs.
One checks that the major arcs are disjoint as soon as $X$ is sufficiently large and
\begin{align*}
    \omega_j < j/2- \psi (D_{d-j}+(d-1)(d-j+1)).
\end{align*}
The definition of the major arcs as given above is iterative in nature in that the approximation of $\alpha_j$ involves the denominators $q_i$ for all $i>j$, and this reflects the fact that our work of the previous section generates an approximation for~$\alpha_j$ only in the case when all~$\alpha_i$ with $i>j$ have already been approximated. In a sense, therefore, the major arcs $\fr M_{\B y}(X; \theta_j, \ldots, \theta_d)$ are only defined inside the set $\fr M_{\B y}(X; \theta_{j+1}, \ldots, \theta_d)$. 

At this point, we observe that the size of $T_{\B y}(\ba)$ is well defined for any particular $\ba$. In the light of Lemma~\ref{L3.2}, this means that we lose nothing by making the choice
\begin{align}\label{4.2}
    k_j \theta_j = k_i \theta_i \quad(2 \le i, j \le d).
\end{align}
With this assumption, as a consequence of our nested definition of the major arcs we have 
\begin{align*}
	|T_{\B y}(\ba)| \ll \frac{X^s}{d(\Lambda_{\B y}) }X^{-k_j \theta_j+\eps} \quad \text{ whenever $\ba \in \fr m_{\B y}(X, \theta_j, \ldots, \theta_d)$,}
\end{align*}
where the $\eps$ absorbs any possible dependence on the constants $c_j$. As the convention \eqref{4.2} renders much of the information in our above notation superfluous, we put
\begin{align*}
    \fr M_{\B y}^{(j)}(X; \theta_j)&= \fr M_{\B y}(X; \theta_j, (k_j/k_{j+1})\theta_j, \ldots, (k_j/k_{d})\theta_j),
\end{align*}
and we adopt an analogous convention for the minor arcs.

It is useful to make the definition
\begin{align*}
    \Omega_j &= \sum_{i=j}^d \omega_i = \sum_{i=j}^d (i-j+1)\nu_i \qquad (2 \le j \le d).
\end{align*}
Write further
\begin{align}\label{4.3}
	\sigma_j = \sum_{i=j}^d \frac{(i-1)}{k_i} \quad \hbox{ and } \quad \Sigma_j=\sum_{i=j}^d \sigma_j= \sum_{i=j}^d\frac{(i-j+1)(i-1)}{k_i},
\end{align}
then \eqref{4.2} implies that
\begin{align}\label{4.4}
	\omega_j = \sigma_j k_j \theta_j \quad \hbox{ and } \quad \Omega_j = \Sigma_j k_j \theta_j.
\end{align}
When there is no danger of confusion, we will employ the convention that 
\begin{align*}
	\Sigma = \Sigma_2, \qquad \sigma =\sigma_2, \qquad \omega = \omega_2.
\end{align*}
Also, define
\begin{align*}
    \Delta_j &= \sum_{i=j}^d D_{d-i} = \textstyle{\frac16}(d - j) (d - j+1) (d - j+2),
\end{align*}
noting that $\Delta_d=0$ and 
\begin{align*}
	\Delta_j \le \Delta_2 = \textstyle{\frac16} d (d^2 - 3 d + 2)\quad \text{ for all $j$}.
\end{align*} 
We then have the following simple lemma. 
\begin{lem}\label{L4.1}
  	For any $j$ with $2 \le j \le d$ the volume of the multi-dimensional major arcs is bounded by
  	\begin{align*}
    	\vol \fr M_{\B y}(X; \theta_j, \ldots, \theta_d) \ll X^{-(D-D_{j-1}) + \Omega_j + \omega_j}|\B y|^{\Delta_{j} + D_{d-j}}\mu_1^{D_{d-j+2}-1}.
     \end{align*}
\end{lem}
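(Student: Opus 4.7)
The plan is to bound the volume by a straightforward union bound: for each admissible tuple $(q_j, a_j, q_{j+1}, a_{j+1}, \ldots, q_d, a_d)$, the set of $\bm\alpha \in [0,1)^{d-1}$ satisfying \eqref{4.1} is a product of intervals in the coordinates $\alpha_j, \ldots, \alpha_d$ (with $\alpha_2, \ldots, \alpha_{j-1}$ free), and then we sum over all such tuples. Nothing in the argument exploits disjointness; we simply estimate the total measure of the union.

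First, I would fix a tuple of denominators $q_j, \ldots, q_d$ satisfying $1 \le q_i \le c_j X^{\nu_i}|\B y|^{d-i}\mu_1$, noting that this determines $Q_i = \prod_{l=i}^d q_l$ for $j \le i \le d$ via \eqref{2.9}. For each $i \in \{j, \ldots, d\}$ and each admissible numerator $a_i$ (with $0 \le a_i \le Q_i$), the inequality $|\alpha_i Q_i - a_i| \le c_j X^{-i+\omega_i}|\B y|^{D_{d-i}}\mu_1^{d-i+1}$ confines $\alpha_i$ to an interval of length at most $2c_j X^{-i+\omega_i}|\B y|^{D_{d-i}}\mu_1^{d-i+1}/Q_i$. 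Summing over the at most $Q_i + 1$ admissible values of $a_i$ (and using that $Q_i \ge 1$), the one-dimensional Lebesgue measure of the $\alpha_i$-section is $\ll X^{-i+\omega_i}|\B y|^{D_{d-i}}\mu_1^{d-i+1}$. Multiplying these bounds for $j \le i \le d$ and noting that the coordinates $\alpha_2, \ldots, \alpha_{j-1}$ each contribute a factor of at most $1$, I get for fixed $(q_i)$ the bound
\begin{align*}
	\prod_{i=j}^d X^{-i + \omega_i} |\B y|^{D_{d-i}} \mu_1^{d-i+1}.
\end{align*}
Next I would sum trivially over the $q_i$, which contributes an additional factor of $\prod_{i=j}^d c_j X^{\nu_i}|\B y|^{d-i}\mu_1$, so that in total
\begin{align*}
	\vol \fr M_{\B y}(X; \theta_j, \ldots, \theta_d) \ll \prod_{i=j}^d X^{\nu_i - i + \omega_i} |\B y|^{d - i + D_{d-i}} \mu_1^{d-i+2}.
\end{align*}

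It remains to simplify the exponents. Using \eqref{2.8} one has $\sum_{i=j}^d \nu_i = \omega_j$ by definition, and the defining identity $\Omega_j = \sum_{i=j}^d \omega_i$ (combined with $\sum_{i=j}^d i = D - D_{j-1}$, where $D_k = \frac12 k(k+1)$) reduces the $X$-exponent to $-(D - D_{j-1}) + \Omega_j + \omega_j$. For the $|\B y|$-exponent, the identity $\sum_{i=j}^d (d-i) = \sum_{k=0}^{d-j} k = D_{d-j}$ handles the first piece, while the definition $\Delta_j = \sum_{i=j}^d D_{d-i}$ handles the second. Finally for the $\mu_1$-exponent, the elementary computation $\sum_{i=j}^d (d-i+2) = D_{d-j} + 2(d-j+1) = \frac12 (d-j+1)(d-j+4) = D_{d-j+2} - 1$ gives the desired exponent. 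Assembling these three simplifications yields the stated bound. The only slightly tricky step is the last $\mu_1$-exponent identity, but it is a routine algebraic manipulation rather than a genuine obstacle; the whole proof is essentially bookkeeping.
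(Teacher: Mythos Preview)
Your argument is correct and is essentially the same as the paper's: both take a union bound over all admissible tuples $(q_i,a_i)_{j\le i\le d}$, multiply the lengths of the resulting $\alpha_i$-intervals, sum over the $a_i$ and then the $q_i$, and reduce the product to the stated exponents. The only difference is expository---the paper compresses the estimate into a single chain of displayed sums and products while you spell out each step.
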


\begin{proof}
  	Recall the notation \eqref{2.8} and \eqref{2.9}. The condition~\eqref{4.1} implies that
  	\begin{align*}
    	&\vol \fr M_{\B y}(X; \theta_j, \ldots, \theta_d) \\
    	&\qquad  \ll  \sum_{q_j=1}^{c_jX^{\nu_j}|\B y|^{d-j}\mu_1 } \sum_{a_j = 0}^{Q_j}\left( \frac{X^{-j+\omega_j} |\B y|^{D_{d-j}}\mu_1^{d-j+1}}{Q_j} \right)\ldots \sum_{q_d=1}^{c_jX^{\nu_d}\mu_1} \sum_{a_d =  0}^{Q_d}\left( \frac{X^{-d+\omega_d}\mu_1}{Q_d} \right) \\
    	&\qquad \ll \prod_{i=j}^d X^{-i+\omega_i + \nu_i} |\B y|^{D_{d-i} + d-i}\mu_1^{d-i+2}\\
    	&\qquad \ll  X^{-(D-D_{j-1})+\Omega_j+\omega_j}|\B y|^{\Delta_j + D_{d-j}}\mu_1^{D_{d-j+2}-1}
  	\end{align*}
  	as claimed.
\end{proof}

Our next task is to analyse under which conditions the contribution of the minor arcs is under control. We first consider the one-dimensional minor arcs $\fr m_{\B y}^{(d)}(X; \theta_d)$. 

\begin{lem}\label{L4.2}
	For any choice of positive parameters $\theta_d \in (0,1]$, $k_d$ and $\delta_d$ suppose that
	\begin{align}\label{4.6}
		k_d > D-1+\delta_d
	\end{align}
	and 
	\begin{align}\label{4.7}
		(1-\Sigma_d- \sigma_d)k_d\theta_d > D_{d-1}-1 + \delta_d.
	\end{align}
	Then for some $\nu>0$ we have the bound
	\begin{align*}
		\int_{\fr m_{\B y}^{(d)}(X; \theta_d)} |T_{\B y}(\ba)| \D \ba  \ll X^{s-(D-1)- \delta_d-\nu} \mu_1.
	\end{align*}
\end{lem}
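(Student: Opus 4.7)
My strategy is to combine the pointwise minor-arc bound from Lemma~\ref{L3.2} with the Weyl-type estimate of Lemma~\ref{L2.2}, integrated over the single coordinate $\alpha_d$ that is constrained on the minor arcs.

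First, I would invoke Lemma~\ref{L3.2} with $j=d$: the hypotheses concerning $i>d$ are vacuous, the minor arc condition on $\ba$ excludes case \eqref{it:major}, and the singular case \eqref{it:sing} is excluded by the standing assumption $\B y\in\cal A(\rho)$ together with the inequality \eqref{3.8}. Therefore case \eqref{it:minor} supplies the pointwise bound
\begin{equation*}
    |T_{\B y}(\ba)| \ll \frac{X^s}{d(\Lambda_{\B y})} X^{-k_d\theta_d + \eps}
\end{equation*}
throughout $\fr m^{(d)}_{\B y}(X; \theta_d)$. When $k_d\theta_d$ exceeds $D-1+\delta_d$ (which happens under \eqref{4.6} once one takes $\theta_d$ close to $1$), trivial integration over the measure-one set $\fr m^{(d)}_{\B y}\subseteq [0,1)^{d-1}$ already delivers the bound $X^{s-(D-1)-\delta_d-\nu}\mu_1$, since $d(\Lambda_{\B y})\mu_1\ge 1$.

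When $\theta_d$ is forced to be small the direct bound is insufficient, and I would exploit the fact that $\fr m_{\B y}^{(d)}$ constrains only $\alpha_d$ to pass to a mean value. Starting from Lemma~\ref{L2.2} with $j=d$, where $R_d=X$ and $Q_{d+1}=1$, one obtains
\begin{equation*}
    |U_{\B y}(\ba)|^{2^{d-1}} \ll \left(\frac{X^s}{d(\Lambda_{\B y})}\right)^{2^{d-1}-d}\Upsilon_d.
\end{equation*}
Since $\Upsilon_d$ depends only on $\alpha_d$, the contribution from the other coordinates is trivial, and integrating over $\alpha_d\in[0,1)$ reduces matters to a one-dimensional divisor-type integral of $\prod_m\min\{X/\mu_m,\|d!\alpha_d B_{\B y,m}^{(d)}\|^{-1}\}$. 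Standard estimates control this integral in terms of the number of tuples $(\B h_1,\ldots,\B h_{d-2},\B w)$ for which the vector $(B_{\B y,m}^{(d)})_{m=1}^s$ vanishes, which by Lemma~\ref{L3.3} and the subsequent geometric discussion is small; it is precisely in this step that the factor $\mu_1$ on the right-hand side of the target bound arises naturally from the one-dimensional integration.

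The two hypotheses \eqref{4.6} and \eqref{4.7} play complementary roles: \eqref{4.6} is the uniform Weyl-type constraint ensuring the pointwise saving is strong enough when $\theta_d$ is large, while \eqref{4.7}, through the factor $1-\Sigma_d-\sigma_d$ that measures the ratio of Weyl saving surviving the van der Corput–style differences, controls the mean-value contribution in the regime of small $\theta_d$. The main obstacle, which I expect will require careful bookkeeping, is to track the interplay between the lattice discriminant $d(\Lambda_{\B y})$, the successive minimum $\mu_1\ll d(\Lambda_{\B y})\ll|\B y|^{d-1}$, and the Weyl exponent $k_d\theta_d$, ensuring that the combination of pointwise bound and divisor-type integral estimate genuinely saves $X^{\delta_d+\nu}$ over the trivial size $X^{s-(D-1)}\mu_1$ in all admissible parameter regimes permitted by \eqref{4.6} and \eqref{4.7}.
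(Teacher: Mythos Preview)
Your opening step is fine and matches the paper: for $\theta_d=1$, Lemma~\ref{L3.2} with $j=d$ gives the pointwise bound $|T_{\B y}(\ba)|\ll (X^s/d(\Lambda_{\B y}))X^{-k_d+\eps}$ on $\fr m_{\B y}^{(d)}(X;1)$, and then \eqref{4.6} and trivial integration over $[0,1)^{d-1}$ suffice.

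The second half of your plan, however, has a real gap. Lemma~\ref{L2.2} bounds $|U_{\B y}(\ba)|^{2^{d-1}}$ by a multiple of $\Upsilon_d$, and $\Upsilon_d$ indeed depends only on $\alpha_d$; but what you must control is the \emph{first} moment $\int_{\fr m}|T_{\B y}(\ba)|\,\D\ba$, not the $2^{d-1}$-th moment. Passing from a pointwise bound on $|U|^{2^{d-1}}$ to an integral bound on $|U|$ via H\"older throws away the minor-arc constraint on $\alpha_d$ entirely, and integrating $\Upsilon_d^{1/2^{d-1}}$ directly is not a ``divisor-type integral'' amenable to standard estimates. Neither route recovers the exponent dictated by \eqref{4.7}; indeed your reading of \eqref{4.7} as measuring ``Weyl saving surviving the van der Corput differences'' is not what is going on.

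What the paper actually does is the classical Birch telescoping. One chooses a decreasing sequence $1=\theta_d^{(0)}>\theta_d^{(1)}>\cdots>\theta_d^{(M)}=\theta_d$ with step sizes obeying
\[
(\theta_d^{(i-1)}-\theta_d^{(i)})k_d < (1-\Sigma_d-\sigma_d)k_d\theta_d-(D_{d-1}-1)-\delta_d,
\]
which is possible with $M=O(1)$ precisely because of \eqref{4.7}. The shell $\fr m^{(d)}_{\B y}(X;\theta_d^{(i)})\cap\fr M^{(d)}_{\B y}(X;\theta_d^{(i-1)})$ is then bounded by $\vol\fr M^{(d)}_{\B y}(X;\theta_d^{(i-1)})\times\sup_{\fr m^{(d)}_{\B y}(X;\theta_d^{(i)})}|T_{\B y}|$, and Lemma~\ref{L4.1} together with \eqref{4.4} shows this product has exponent $s-(D-D_{d-1})+(\Sigma_d+\sigma_d)k_d\theta_d^{(i-1)}-k_d\theta_d^{(i)}$. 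The step-size condition forces this below $s-(D-1)-\delta_d$, and the spare factor $\mu_1$ comes from $\mu_1^{D_{0+2}-1}=\mu_1^2$ in Lemma~\ref{L4.1} combined with $\mu_1\mid d(\Lambda_{\B y})$. So \eqref{4.7} governs the trade-off between growing major-arc volume and shrinking minor-arc sup bound across the telescope, not an integrated divisor sum.
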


\begin{proof}
	Let $\theta_d$ be given. We can find a sequence $\theta^{(i)}_d$ with the property
	\begin{align*}
		1=\theta_d^{(0)} > \theta_d^{(1)} > \ldots > \theta_d^{(M)}=\theta_d>0
	\end{align*}
	and subject to the condition
	\begin{align}\label{4.8}
		\big(\theta_d^{(i-1)}-\theta_d^{(i)} \big) k_d < (1-\Sigma_d-\sigma_d)k_d\theta_d - (D_{d-1}-1) - \delta_d \qquad (1 \le i \le M).
	\end{align}
	Thanks to \eqref{4.7}, this is always possible with $M=O(1)$. We now infer from Lemma~\ref{L3.2} and \eqref{4.6} that
	\begin{align*}
		\int_{\fr m^{(d)}_{\B y}(X; \theta_d^{(0)})}|T_{\B y}(\ba)| \D \ba
		&\ll \sup_{\ba \in \fr m_{\B y}^{(d)}(X;\theta_d^{(0)})} |T_{\B y}(\ba)| \ll \frac{X^s}{d(\Lambda_{\B y})}X^{-k_d +\eps}\ll X^{s-(D-1)-\delta_d- \nu},
	\end{align*}
	provided that $\nu$ is small enough in terms of the other parameters.
	Further, if we write
	\begin{align*}
		\fr m_{\B y, i}^{(d)}& = \fr m^{(d)}_{\B y}(X; \theta_{d}^{(i)}) \cap \fr M^{(d)}_{\B y}(X;  \theta_{d}^{(i-1)}) \qquad (1 \le i \le M),
	\end{align*}
	one obtains via Lemma~\ref{L4.1}, \eqref{4.4}, Lemma~\ref{L3.2} and \eqref{2.4} that
	\begin{align*}
		\int_{\fr m_{\B y, i}^{(d)}}|T_{\B y}(\ba)| \D \ba & \ll \vol \fr M^{(d)}_{\B y}(X;  \theta_{d}^{(i-1)}) \sup_{\ba \in \fr m^{(d)}_{\B y}(X; \theta_{d}^{(i)})} |T_{\B y}(\ba)| \\
		& \ll \frac{X^s}{d(\Lambda_{\B y})}X^{ -(D-D_{d-1})+(\Sigma_{d}+\sigma_d)k_{d}\theta_{d}^{(i-1)}-k_d \theta_{d}^{(i)} + \eps}\mu_1^2,
	\end{align*}
	and \eqref{4.8} ensures that in the exponent one has for every $i=1, \ldots, M$ the relation
	\begin{align*}
		-k_d \theta_{d}^{(i)} + (\Sigma_{d}+\sigma_d)k_{d}\theta_{d}^{(i-1)}
		& \le \big(\theta_{d}^{(i-1)} - \theta_{d}^{(i)}\big) k_d  - \big(1 - (\Sigma_{d} + \sigma_d)\big)k_d\theta_{d}  \\
		&< -(D_{d-1}-1) - \delta_d-\nu
	\end{align*}
	for some sufficiently small $\nu>0$.
	Since $\mu_1|d(\Lambda_{\B y})$ and 
	\begin{align*}
		\int_{\fr m^{(d)}_{\B y}(X;\theta_d)} |T_{\B y}(\ba)| \D \ba = \int_{\fr m^{(d)}_{\B y}(X; \theta_d^{(0)})}|T_{\B y}(\ba)| \D \ba + \sum_{i=1}^M \int_{\fr m_{\B y, i}^{(d)}}|T_{\B y}(\ba)| \D \ba
	\end{align*}
	with $M=O(1)$, this completes the proof.
\end{proof}

We now employ an iterative argument in order to control the contribution from the nested sets of minor arcs. Fix some $j$ in the range $2 \le j \le d-1$, and suppose that the contribution arising from the sets $\fr m^{(i)}_{\B y}(X; \theta_{i})$ is already bounded for all $i>j$ and some suitable parameter $\theta^*_{j+1}$, where the $\theta_i$ with $i>j+1$ are determined by $\theta^*_{j+1}$ via \eqref{4.2}. 

\begin{lem}\label{L4.3}
    Fix an index $j$ with $2 \le j \le d-1$. Suppose that the parameters  $k_{i}$ with $j+1 \le i \le d$ as well as $\theta^*_{j+1}$ are given in accordance with \eqref{2.13}. For some $\delta_{j+1}\ge 0$ assume that  
    \begin{align}\label{4.9}
		(1-\Sigma_{j+1}- \sigma_{j+1})k_{j+1}\theta^*_{j+1} > D_{j}-1 + \delta_{j+1}.
	\end{align}    
    Furthermore, for non-negative parameters $\delta_j$ and $k_j$ suppose that $\theta_j$ satisfies \eqref{3.3} as well as the inequalities
   	\begin{align}\label{4.10}
   		0 < \theta_j < \theta_j^{(0)}= \frac{k_{j+1}}{k_j}\theta^*_{j+1}
   	\end{align}
   	and 
    \begin{align}\label{4.11}
       (1-(\Sigma_j+ \sigma_j))k_j\theta_j > D_{j-1}-1 + \delta_j.
    \end{align}
    Then the $j$-th minor arcs contribution is bounded by
    \begin{align*}
        \int_{\fr m^{(j)}(X; \theta_j)} |T_{\B y}(\ba)| \D \ba \ll  X^{s-(D-1)}\sum_{i=j}^d X^{-\delta_i - \nu}|\B y|^{\Delta_{i} + D_{d-i}} \mu_1^{D_{d-i+2}-2},
    \end{align*}
    where $\nu$ is some suitably small real number. 
\end{lem}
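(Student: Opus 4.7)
The plan is to mirror the chain-telescoping argument of Lemma~\ref{L4.2}, peeling off one level of the minor-arc hierarchy at a time. The key structural observation is that the choice $\theta_j^{(0)} = (k_{j+1}/k_j)\theta^*_{j+1}$, together with \eqref{4.2}, forces the inclusion $\fr M^{(j)}_{\B y}(X;\theta_j^{(0)}) \subseteq \fr M^{(j+1)}_{\B y}(X;\theta^*_{j+1})$ and hence the decomposition
\begin{align*}
    \fr m^{(j)}_{\B y}(X;\theta_j) = \fr m^{(j+1)}_{\B y}(X;\theta^*_{j+1}) \cup \bigl[\fr M^{(j+1)}_{\B y}(X;\theta^*_{j+1}) \setminus \fr M^{(j)}_{\B y}(X;\theta_j)\bigr].
\end{align*}
On the first set the bound at level $j+1$ (coming from Lemma~\ref{L4.2} when $j=d-1$ and from an outer iteration of the present lemma otherwise) supplies the terms $i=j+1,\ldots,d$ of the target sum.

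For the second set I would introduce a finite chain
\begin{align*}
    \theta_j^{(0)} > \theta_j^{(1)} > \ldots > \theta_j^{(M)} = \theta_j
\end{align*}
with step sizes obeying $(\theta_j^{(i-1)}-\theta_j^{(i)})k_j < (1-\Sigma_j-\sigma_j)k_j\theta_j - (D_{j-1}-1) - \delta_j$. Since \eqref{4.11} guarantees that the right-hand side is positive, the chain may be chosen with $M=O(1)$. This produces a further decomposition into the transition slab $\fr M^{(j+1)}_{\B y}(X;\theta^*_{j+1}) \setminus \fr M^{(j)}_{\B y}(X;\theta_j^{(0)})$ and the intermediate slabs $\fr M^{(j)}_{\B y}(X;\theta_j^{(i-1)}) \setminus \fr M^{(j)}_{\B y}(X;\theta_j^{(i)})$ for $1 \le i \le M$. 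On each slab I would apply Lemma~\ref{L3.2} case~\eqref{it:minor} at index $j$ with parameter $\theta_j^{(i)}$ (respectively $\theta_j^{(0)}$), but reading off the higher parameters $\theta_{j+1},\ldots,\theta_d$ from the containing major arc rather than deriving them from the current $\theta_j^{(i)}$ via \eqref{4.2}. This interpretation is the subtle point of the proof: it is precisely the inherited approximations of $\alpha_{j+1},\ldots,\alpha_d$ that meet the hypotheses of Lemma~\ref{L3.2}.

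Combining the resulting pointwise estimate $|T_{\B y}(\ba)| \ll (X^s/d(\Lambda_{\B y}))X^{-k_j\theta_j^{(i)}+\eps}$ with the volume bound from Lemma~\ref{L4.1} applied to the containing major arc, and trading the factor $\mu_1^{D_{d-\ell+2}-1}/d(\Lambda_{\B y})$ for $\mu_1^{D_{d-\ell+2}-2}$ via $d(\Lambda_{\B y}) \gg \mu_1$, the exponent of $X$ on the intermediate slab reduces by \eqref{4.4} to $s - (D-D_{j-1}) + (\Sigma_j+\sigma_j)k_j\theta_j^{(i-1)} - k_j\theta_j^{(i)}$, which by design of the chain is at most $s-(D-1)-\delta_j-\nu$ and yields the $i=j$ term. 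On the transition slab the alignment $k_j\theta_j^{(0)} = k_{j+1}\theta^*_{j+1}$ together with \eqref{4.9} gives an exponent $\le s-(D-1)-\delta_{j+1}-\nu$, which is already absorbed into the $i=j+1$ term of the inductive estimate. The main obstacle is the parallel bookkeeping of the three exponents in $X$, $|\B y|$, and $\mu_1$ across the decomposition, and the correct identification of the higher $\theta_i$'s on each slab; once these are in place, summing the $O(1)$ contributions delivers the claimed bound.
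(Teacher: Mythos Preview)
Your plan is correct and follows the paper's proof essentially step for step: the same decomposition $\fr m^{(j)}_{\B y}(X;\theta_j^{(0)}) = \fr m^{(j+1)}_{\B y}(X;\theta^*_{j+1}) \cup \bigl(\fr m^{(j)}_{\B y}(X;\theta_j^{(0)}) \cap \fr M^{(j+1)}_{\B y}(X;\theta^*_{j+1})\bigr)$, the same inductive treatment of the first piece, the same descending chain $\theta_j^{(0)} > \ldots > \theta_j^{(M)} = \theta_j$ with the identical step condition, and the same combination of Lemma~\ref{L4.1} with the minor-arc pointwise bound on each slab. The only cosmetic difference is that the paper absorbs the transition slab directly into the bound for $\int_{\fr m^{(j)}_{\B y}(X;\theta_j^{(0)})}$ rather than listing it separately, and simply invokes the established minor-arc estimate $|T_{\B y}(\ba)| \ll (X^s/d(\Lambda_{\B y}))X^{-k_j\theta_j^{(i)}+\eps}$ on $\fr m^{(j)}_{\B y}(X;\theta_j^{(i)})$ rather than re-verifying the hypotheses of Lemma~\ref{L3.2} on each piece.
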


\begin{proof}
    Observe first that with our notation in \eqref{4.10} we have the decomposition 
    \begin{align*}
    	\fr m_{\B y}^{(j)}(X; \theta_j^{(0)}) = \fr m_{\B y}^{(j+1)}(X; \theta^*_{j+1}) \cup \left(\fr m_{\B y}^{(j)}(X; \theta_j^{(0)}) \cap \fr M_{\B y}^{(j+1)}(X; \theta^*_{j+1}) \right).
    \end{align*}
    Suppose that the lemma has been established for $j$ replaced by $j+1$, and recall \eqref{2.4} and \eqref{4.4}. 
    We infer from the inductive hypothesis and Lemmata~\ref{L4.1} and \ref{L3.2} that
    \begin{align*}
    	&\int_{\fr m^{(j)}_{\B y}(X; \theta_j^{(0)})}|T_{\B y}(\ba)| \D \ba\\
		&\ll  \int_{\fr m^{(j+1)}_{\B y}(X; \theta^*_{j+1})}|T_{\B y}(\ba)| \D \ba + \vol \fr M^{(j+1)}_{\B y}(X; \theta^*_{j+1}) \sup_{\ba \in \fr m_{\B y}^{(j)}(X;\theta_j^{(0)})} |T_{\B y}(\ba)| \\
		& \ll  \sum_{i=j+1}^d X^{s-(D-1)-\delta_i - \nu}|\B y|^{\Delta_i + D_{d-i} } \mu_1^{D_{d-i+2}-2}\\ 
		& \qquad +\frac{X^s}{d(\Lambda_{\B y})}X^{-(D-D_{j}) + (\Sigma_{j+1}+ \sigma_{j+1})k_{j+1}\theta^*_{j+1} -k_j \theta_j^{(0)}+\eps}|\B y|^{\Delta_{j+1} + D_{d-j-1} } \mu_1^{D_{d-j+1}-1}.
	\end{align*}
	Recall \eqref{4.10}. Thus the above bound implies via \eqref{4.9} and the relation $\mu_1 \ll  d(\Lambda_{\B y})$ that 
	\begin{align*}
		\int_{\fr m^{(j)}_{\B y}(X; \theta_j^{(0)})}|T_{\B y}(\ba)| \D \ba &\ll X^{s-(D-1)} \sum_{i=j+1}^d X^{-\delta_i - \nu}|\B y|^{\Delta_{i}+ D_{d-i}} \mu_1^{D_{d-i+2}-2},
    \end{align*}
    provided that $\nu$ is small enough in terms of the other parameters.

    Let now $\theta_j$ be given according to~\eqref{4.10} and \eqref{4.11}. We can find a sequence $\theta^{(i)}_{j}$ satisfying
    \begin{align*}
    	 \theta_j^{(0)} > \theta_j^{(1)} > \ldots > \theta_j^{(M)}=\theta_j>0,
    \end{align*}
    and subject to the condition
    \begin{align}\label{4.12}
    	\big(\theta_j^{(i-1)}-\theta_j^{(i)} \big) k_j < (1-(\Sigma_j+ \sigma_j))k_j\theta_j - (D_{j-1}-1) - \delta_j \qquad (1 \le i \le M). 
    \end{align}
    This is always possible with $M=O(1)$.
    For $i \ge 1$ set
    \begin{align*}
        \fr m_{\B y, i}^{(j)}&  = \fr m^{(j)}_{\B y}(X; \theta_{j}^{(i)}) \cap \fr M^{(j)}_{\B y}(X;  \theta_{j}^{(i-1)}).
    \end{align*}
    Then one deduces from Lemma~\ref{L4.1}, Lemma~\ref{L3.2}, \eqref{4.4} and \eqref{4.12} that
    \begin{align*}
        \int_{\fr m_{\B y, i}^{(j)}}|T_{\B y}(\ba)| \D \ba & \ll \vol \fr M^{(j)}_{\B y}(X;  \theta_{j}^{(i-1)}) \sup_{\ba \in \fr m_{\B y}^{(j)}(X;\theta_j^{(i)})} |T_{\B y}(\ba)|\\
        & \ll \frac{X^s}{d(\Lambda_{\B y})}X^{-(D-D_{j-1})+(\Sigma_{j}+\sigma_j)k_{j}\theta_{j}^{(i-1)}-k_j \theta_{j}^{(i)} + \eps}|\B y|^{\Delta_{j} + D_{d-j} } \mu_1^{D_{d-i+2}-1}\\
		& \ll X^{s-(D-1)-\delta_j-\nu}|\B y|^{\Delta_j + D_{d-j} } \mu_1^{D_{d-j+2}-2} 
	\end{align*}   
	for each $i \ge 1$, and thus altogether 
    \begin{align*}
        \int_{\fr m^{(j)}_{\B y}(X;\theta_j)} |T_{\B y}(\ba)| \D \ba &=  \int_{\fr m^{(j)}_{\B y}(X; \theta_j^{(0)})}|T_{\B y}(\ba)| \D \ba +  \sum_{i=1}^M \int_{\fr m_{\B y, i}^{(j)}}|T_{\B y}(\ba)| \D \ba \\
        & \ll M X^{s-(D-1)} \sum_{i=j}^d X^{-\delta_i - \nu}|\B y|^{\Delta_{i} + D_{d-i}} \mu_1^{D_{d-i+2} - 2}.
    \end{align*}
    Since $M = O(1)$, this completes the proof.
\end{proof}

We may now apply first Lemma~\ref{L4.2} and then Lemma~\ref{L4.3} successively to each of the $\theta_j$. Thus, for the initial step we need to ensure that the condition \eqref{4.6} is satisfied, and after that we have to satisfy the requirements described by \eqref{4.11} for all $2 \le j \le d$.  On the other hand, we have to be careful to ensure that in each iteration we can take $\theta_{j+1}^*$ small enough for Lemma~\ref{L3.2} to be applicable within Lemma~\ref{L4.3}. The crucial requirement here is for the bound \eqref{3.3} to be satisfied for $\theta_j^{(0)}$ for all $j$ with $2 \le j \le d-1$. Using \eqref{4.4} and our convention  \eqref{4.2}, the bound of \eqref{3.3} can be re-written in the form 
\begin{align}\label{4.13}
	(\sigma_j + 1/k_{j-1})k_j \theta_j<1  - (D_{d-j+1}+(d-1)(d-j+1))\psi \qquad (3 \le j \le d).
\end{align}
For $3 \le j \le d$, the condition \eqref{4.13} is compatible with the hypotheses \eqref{4.7} and \eqref{4.11} of Lemmata~\ref{L4.2} and \ref{L4.3}, respectively, only if
\begin{align}\label{4.14}
	\left(\sigma_j +k_{j-1}^{-1}\right)\frac{D_{j-1}-1+\delta_j}{1  - (D_{d-j+1}+(d-1)(d-j+1))\psi} + \Sigma_j + \sigma_j<  1.
\end{align}
At the same time, a comparison of \eqref{4.13} with \eqref{3.5} shows that we also require
\begin{align}\label{4.15}
	\sigma_j + \frac{1}{k_{j-1}} < \frac{(1-(D_{d-j+1}+(d-1)(d-j+1))\psi)\varpi}{\psi}.
\end{align}
Meanwhile, when $j=2$, the bound of \eqref{4.13} does not apply, and we only have the constraints stemming from \eqref{3.5} and \eqref{4.11}, which can be rewritten as  
\begin{align}\label{4.16}
	k_2\theta_2 > \max\left\{ \frac{\delta_2}{1-(\Sigma_2 +\sigma_2)}, \psi\varpi^{-1}  \right\}.
\end{align}
We will attend to this bound later, but in the meanwhile we remark that regardless of the specific values $\theta_2>0$ and $\delta_2 \ge 0$, it implies that we must have $
	\Sigma_2 + \sigma_2 < 1$.  
Summarising, we obtain the following intermediate result. 
\begin{prop}\label{P4.4}
	Assume \eqref{3.8}. Suppose that  \eqref{4.14} and \eqref{4.15} are satisfied for all $j \ge 3$, and that furthermore \eqref{4.6} and \eqref{4.16} hold. 
	Then for some $\nu > 0$ we have 
	\begin{align}\label{4.17}
		N_{\B y}(X) = \int_{\fr M_{\B y}^{(2)}(X; \theta)} T_{\B y}(\ba) \D \ba + O\left(X^{s-(D-1)-\nu}\sum_{j=2}^dX^{-\delta_j}|\B y|^{\Delta_j + D_{d-j}}\mu_1^{D_{d-j+2}-2}\right).
	\end{align}
\end{prop}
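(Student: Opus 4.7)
The proposition reduces to splicing together Lemmata~\ref{L4.2} and~\ref{L4.3}. Starting from \eqref{2.5}, I would write
\[
N_{\B y}(X) = \int_{\fr M_{\B y}^{(2)}(X;\theta)} T_{\B y}(\ba)\D\ba + \int_{\fr m_{\B y}^{(2)}(X;\theta)} T_{\B y}(\ba)\D\ba,
\]
so that the major-arc integral already appears as the main term in \eqref{4.17}, and the entire task is to absorb the minor-arc integral into the error. This is done by descending induction on $j \in \{d, d-1, \ldots, 2\}$, at each stage establishing the telescoping bound
\[
\int_{\fr m_{\B y}^{(j)}(X;\theta_j)} |T_{\B y}(\ba)|\D\ba \ll X^{s-(D-1)} \sum_{i=j}^{d} X^{-\delta_i - \nu} |\B y|^{\Delta_i + D_{d-i}} \mu_1^{D_{d-i+2}-2}.
\]
Specialising to $j=2$ yields exactly the announced error, since $\nu$ can then be absorbed into the prefactor.

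The base case $j=d$ is delivered directly by Lemma~\ref{L4.2}, whose assumptions \eqref{4.6} and \eqref{4.7} coincide with the standing hypothesis \eqref{4.6} and the $j=d$ instance of \eqref{4.11}. Throughout, assumption \eqref{3.8} guarantees, via Lemma~\ref{L3.3} and \eqref{3.7}, that the geometric alternative \eqref{it:sing} of Lemma~\ref{L3.2} never applies for $\B y \in \cal A(\rho)$. The inductive step from $j+1$ to $j$ is then supplied by Lemma~\ref{L4.3}: its hypothesis \eqref{4.9} is precisely the $(j+1)$st instance of \eqref{4.11}, whereas \eqref{4.10} and \eqref{4.11} at index $j$ are encoded in the standing assumptions via the coupling \eqref{4.2}.

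The delicate issue is the internal consistency of the parameters $\theta_j, k_j$ across the iteration, and this is where the remaining hypotheses \eqref{4.14}--\eqref{4.16} enter. The constraint \eqref{3.3} needed for Lemma~\ref{L3.2} to be applicable at level $j-1$ translates, via \eqref{4.4} and \eqref{4.2}, into the upper bound \eqref{4.13} on $k_j\theta_j$, while \eqref{4.11} furnishes the matching lower bound; compatibility of the two is precisely the content of \eqref{4.14}. Analogously, \eqref{4.15} is the translation of the non-degeneracy requirement \eqref{3.5}. At the terminal index $j=2$ the upper bound \eqref{4.13} is vacuous, leaving only the combined lower bound \eqref{4.16}. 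With these bookkeeping checks in place, Lemmata~\ref{L4.2} and~\ref{L4.3} can be invoked in sequence to carry the induction through, and $\nu>0$ is chosen small enough at the end to dominate the finitely many $X^\eps$ losses that accumulate. The main obstacle is not any new analytical input but precisely this parameter bookkeeping: verifying that \eqref{4.14} and \eqref{4.15} guarantee a simultaneously feasible choice of $\theta_j, k_j$ at every index of the iteration.
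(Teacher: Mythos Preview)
Your proposal is correct and follows essentially the same approach as the paper: split \eqref{2.5} into major and minor arcs, invoke Lemma~\ref{L4.2} for the base case $j=d$ and Lemma~\ref{L4.3} iteratively down to $j=2$, with the hypotheses \eqref{4.6}, \eqref{4.14}, \eqref{4.15}, \eqref{4.16} supplying exactly the parameter compatibility needed to run the iteration. The paper's own proof is a one-line reference to these lemmata and the preceding discussion, which is precisely the content you have spelled out.
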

\begin{proof}
	This follows from \eqref{2.5} upon applying Lemmata \ref{L4.2} and \ref{L4.3}, and the discussion preceding the statement of the proposition. 
\end{proof}

\section{Understanding the main term}\label{S5}

In order to show that the main term of~\eqref{4.17} is indeed of the expected shape, it is necessary for the approximations of all components of $\ba$ to have the same denominator. Recall that we wrote $\omega = \omega_2$, and set
\begin{align*}
	q=Q_2\quad \text{ and }\quad b_j=(q/Q_j)a_j \qquad (2 \le j \le d).
\end{align*}
For some positive constant $c$ set $W=cX^{\omega}|\B y|^{D_{d-2} + (d-1)^2}$, where $\omega$ is as obtained in Proposition~\ref{P4.4}. Our final set $\fr P_{\B y}(X; \omega)$ of major arcs is now the set of all $\ba$ with an approximation of the shape
\begin{align}\label{5.1}
	1 \le q \le W \quad \text{and} \quad |\alpha_j  - b_j/q| \le X^{-j}W \quad (2 \le j \le d).
\end{align}
Recall \eqref{2.4}. When $c$ is sufficiently large, the set $\fr P_{\B y}(X; \omega)$ is slightly larger than $\fr M_{\B y}^{(2)} (X; \theta)$, so the corresponding minor arcs $\fr p_{\B y}(X; \omega) = [0,1)^{d-1} \setminus \fr P_{\B y}(X; \omega)$ are contained in $\fr m_{\B y}^{(2)} (X; \theta)$. In the statement of Proposition~\ref{P4.4}, we may therefore replace the major arcs $\fr M_{\B y}^{(2)} (X; \theta)$ by the larger set $\fr P_{\B y}(X; \omega)$. 

Let $\cal L_{\B y}$ denote the $s$-dimensional subspace of $\R^n$ containing $\Lambda_{\B y}$. Furthermore, we define $\cal L_{\B y}(X) = \cal L_{\B y} \cap [-X,X]^n$, and we let $\Lambda_{\B y}(q)$ denote the set of residue classes modulo $q$ of lattice points $\B x \in \Lambda_{\B y}$. Also, set 
\begin{align*}
	\vartheta_{\B y}(\bm \alpha;\B x) = \sum_{j=2}^d \alpha_j \Phi_{\B y}^{(j)} (\B x)
\end{align*}
for the analogue of $\phi_{\B y}$ in terms of the original variables $\B x \in \Lambda_{\B y}$. 
In this notation, we can now define
\begin{align}\label{5.2}
	S_{\B y}(q, \B a) = \sum_{\B x \in \Lambda_{\B y}(q)} e(\vartheta_{\B y}(\B a/q;\B x) ) \quad \text{
		and } \quad v_{\B y}(\bb, X) = \int_{\cal L_{\B y}(X)} e(\vartheta_{\B y}(\bb; \bm \xi)) \D \bm{\xi}.
\end{align}
These functions allow us to approximate the exponential sum $T_{\B y}(\ba)$ on the major arcs. 

\begin{lem}\label{L5.1}
	Suppose that $\ba = \B a /q + \bb$ with $q \le X^{1-\psi(d-1)}$. We have 
	\begin{align}\label{5.3}
		\left|T_{\B y}(\ba) -  \frac{S_{\B y}(q, \B a)}{q^s} \frac{v_{\B y}(\bb, X)}{d (\Lambda_{\B y})}\right| 
		&\ll X^{s-1}q\left( 1 + \frac{1}{d(\Lambda_{\B y})}\sum_{j=2}^d |\beta_j|X^{j}|\B y|^{d-j} \right).
	\end{align}
\end{lem}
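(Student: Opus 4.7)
The plan is to follow the classical Davenport-style approximation procedure for exponential sums on major arcs, adapted to the lattice setting of $\Lambda_{\B y}$.

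\emph{Step 1: Separating rational and analytic contributions.} I would partition the sum defining $T_{\B y}(\ba)$ according to residue classes of $\Lambda_{\B y}$ modulo $q$, writing each $\B x \in \Lambda_{\B y} \cap [-X,X]^n$ as $\B x = \B r + q\B z$ with $\B r \in \Lambda_{\B y}(q)$ and $\B z \in \Lambda_{\B y}$. Since $\Phi^{(j)}_{\B y}(\B x)$ is a form of degree $j$ in $\B x$ with integer coefficients, binomial expansion yields $\Phi^{(j)}_{\B y}(\B r + q\B z) \equiv \Phi^{(j)}_{\B y}(\B r) \pmod{q}$, whence $e(\vartheta_{\B y}(\B a/q;\B r + q\B z)) = e(\vartheta_{\B y}(\B a/q;\B r))$. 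This factorises the sum as $T_{\B y}(\ba) = \sum_{\B r \in \Lambda_{\B y}(q)} e(\vartheta_{\B y}(\B a/q;\B r))\,\mathcal{I}(\B r)$, where $\mathcal{I}(\B r) = \sum_{\B x \in (\B r + q\Lambda_{\B y}) \cap [-X,X]^n} e(\vartheta_{\B y}(\bb;\B x))$.

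\emph{Step 2: Sum-to-integral approximation.} The shifted lattice $\B r + q\Lambda_{\B y}$ has covolume $q^s d(\Lambda_{\B y})$ in $\cal L_{\B y}$, with fundamental-domain diameter $\ll q\mu_1$. A multidimensional Riemann-sum / partial-summation argument yields $\mathcal{I}(\B r) = v_{\B y}(\bb,X)/(q^s d(\Lambda_{\B y})) + O(E_{\B r})$, where $E_{\B r}$ splits into a derivative contribution controlled by $\sup|\nabla f|$ with $f(\bm\xi) = e(\vartheta_{\B y}(\bb;\bm\xi))$, and a boundary contribution from cells meeting $\partial([-X,X]^n) \cap \cal L_{\B y}$. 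On $[-X,X]^n$ one has $|\partial\Phi^{(j)}_{\B y}/\partial x_i| \ll |\B y|^{d-j}|\B x|^{j-1} \ll X^{j-1}|\B y|^{d-j}$, and therefore $\sup|\nabla f| \ll \sum_{j=2}^d|\beta_j|X^{j-1}|\B y|^{d-j}$. The hypothesis $q \le X^{1-\psi(d-1)}$, together with \eqref{2.4}, ensures that $q\mu_1 \ll X$, so that the mesh cells genuinely fit inside the counting domain and boundary layers contribute only lower-order terms.

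\emph{Step 3: Assembling the estimate.} Summing the approximation over the $q^s$ residue classes, weighted by the rational exponentials, produces the main term $S_{\B y}(q,\B a)\,v_{\B y}(\bb,X)/(q^s d(\Lambda_{\B y}))$. Bounding $E_{\B r}$ trivially in $\B r$ gives a total error of order $q\mu_1 X^{s-1} d(\Lambda_{\B y})^{-1}\bigl(1 + \sum_j|\beta_j|X^j|\B y|^{d-j}\bigr)$. Since the successive minima of an integer sublattice all satisfy $\mu_i \ge 1$, one has $\mu_1 \ll d(\Lambda_{\B y})$, which absorbs the extraneous $\mu_1$ and collapses the estimate to the form asserted in~\eqref{5.3}.

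The principal technical obstacle is the simultaneous bookkeeping of several disparate scales---$X$, $|\B y|$, $\mu_1$, $d(\Lambda_{\B y})$, and $q$---and in particular ensuring that the Riemann-sum error loses only the one factor of $\mu_1$ that can then be reabsorbed via the discriminant. If a sharper bound were required for subsequent applications, an iterated Abel summation in the coordinates dual to a reduced basis $\B b_1,\ldots,\B b_s$ of $\Lambda_{\B y}$ would replace the single $\mu_1$ by the individual $\mu_i$, yielding improved dependence on the lopsidedness of the lattice.
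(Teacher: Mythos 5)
Your argument is essentially the paper's own proof: sort into residue classes of $\Lambda_{\B y}$ modulo $q$, factor out the rational phase by periodicity, compare the inner sum to the integral cell-by-cell over scaled translates of a fundamental domain (mean value theorem for the interior cells, a counting bound for the cells meeting the boundary of $\cal L_{\B y}(X)$), and finish with the trivial bound $|S_{\B y}(q,\B a)|\le q^s$. The only point to flag is your final bookkeeping: your intermediate error $q\mu_1X^{s-1}d(\Lambda_{\B y})^{-1}\bigl(1+\sum_j|\beta_j|X^j|\B y|^{d-j}\bigr)$, after invoking $\mu_1\ll d(\Lambda_{\B y})$, collapses to $qX^{s-1}\bigl(1+\sum_j|\beta_j|X^j|\B y|^{d-j}\bigr)$ rather than to the stated \eqref{5.3}, which keeps the factor $d(\Lambda_{\B y})^{-1}$ on the oscillatory term; the discrepancy is exactly the cell diameter $\asymp\mu_1$ that you (correctly) charge in the mean-value step and that the paper's own proof does not record. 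Since $\mu_1\ll|\B y|^{d-1}\ll X^{\psi(d-1)}$ and the lemma is only ever applied with $|\beta_j|\le X^{-j}W$, where the subsequent error term already tolerates powers of $W$, this extra factor is harmless downstream, but as written your last sentence overstates what has been proved.
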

\begin{proof}
	This is essentially standard, but due to our specific setting over a lattice we prefer to provide a full proof. 	
	Sorting the terms into arithmetic progressions modulo $q$, we find that 
	\begin{align*}
		T_{\B y}(\ba) = \sum_{\B z \in \Lambda_{\B y}(q)} e (\vartheta_{\B y}(\B a/q;\B z)) \sum_{\substack{\B w \in \Lambda_{\B y} \\			
		q \B w + \B z \in \fr A_{\B y}(X)}} e (\vartheta_{\B y}(\bb;q \B w + \B z)),
	\end{align*}
	and hence 
	\begin{align*}
		\left|T_{\B y}(\ba) -  \frac{S_{\B y}(q, \B a)}{q^s} \frac{v_{\B y}(\bb, X)}{d (\Lambda_{\B y})}\right| 
		& \ll \sum_{\B z \in \Lambda_{\B y}(q)} e (\vartheta_{\B y}(\B a/q;\B z)) H(q, \B z, \bb),
	\end{align*}
	where 
	\begin{align*}
		H(q, \B z, \bb) = \sum_{\substack{\B w \in \Lambda_{\B y} \\q \B w + \B z \in \fr A_{\B y}(X)}} e (\vartheta_{\B y}(\bb; q \B w + \B z)) - \frac{1}{q^s d(\Lambda_{\B y})}  \int_{\bm{\xi} \in \cal L_{\B y}(X)} e(\vartheta_{\B y}(\bb;\bm \xi)) \D \bm{\xi}.
	\end{align*}
	
	Denote the fundamental domain of $\Lambda_{\B y}$ by $\cal F$, and for $\B w \in \Lambda_{\B y}$ write $\cal F(\B w) = \B w + \cal F$ for the fundamental domain located at $\B w$. Moreover, we write $\cal F_{q, \B z}(\B w) = q (\B w + \cal F)+\B z $ for the domain, stretched by a factor $q$, that is located at $q \B w + \B z$. We want to replace $H(q, \B z, \bb)$ by the related quantity 
	\begin{align*}
		H^*(q, \B z, \bb) =\sum_{\substack{\B w \in \Lambda_{\B y} \\q \B w + \B z \in \fr A_{\B y}(X)}} \left\{e (\vartheta_{\B y}(\bb; q \B w + \B z)) - \frac{1}{q^s d(\Lambda_{\B y})}  \int_{ \cal F_{q, \B z}(\B w)} e(\vartheta_{\B y}(\ba;\bm \xi)) \D \bm{\xi}\right\}.
	\end{align*}
	Clearly, we have  $\vol \cal F = d(\Lambda_{\B y})$ and $\vol \cal F_{q, \B z}(\B w) = q^s d(\Lambda_{\B y})$. Thus, $\cal L_{\B y}(X)$ may be covered by $O(X^s/(q^s d(\Lambda_{\B y})))$  domains $\cal F_{q, \B z}(\B w)$ as $\B w$ varies over $\Lambda_{\B y}$, and the boundary intersects $\ll (X/q)^{s-1}\mu_1/d(\Lambda_{\B y}) \ll (X/q)^{s-1}$ of these. Thus, the defect is of size at most $O(X^{s-1}q d(\Lambda_{\B y}) )$. With this information, we find upon partitioning the integrating domain that $H(q, \B z, \bb) - H^*(q, \B z, \bb)\ll (X/q)^{s-1}$, and thus 
	\begin{align*}
		\left|T_{\B y}(\ba) -  \frac{S_{\B y}(q, \B a)}{q^s} \frac{v_{\B y}(\bb, X)}{d (\Lambda_{\B y})}\right| 
		& \ll \sum_{\B z \in \Lambda_{\B y}(q)} e (\vartheta_{\B y}(\B a/q;\B z)) H^*(q, \B z, \bb) + O(X^{s-1}q).
	\end{align*}
	Rewriting
	\begin{align*}
		H^*(q, \B z, \bb)
		=  \sum_{\substack{\B w \in \Lambda_{\B y} \\q \B w + \B z \in \fr A_{\B y}(X)}}\frac{1}{d(\Lambda_{\B y})}\int_{\cal F(\B w)}e (\vartheta_{\B y}(\bb; q \B w + \B z)) - e(\vartheta_{\B y}(\bb; q \bm \xi + \B z)) \D \bm{\xi}
	\end{align*}
	puts us into a position where we can apply the mean value theorem, whereupon we see that
	\begin{align*}	
		H^*(q, \B z, \bb)
		&\ll \sum_{\substack{\B w \in \Lambda_{\B y} \\q \B w + \B z \in \fr A_{\B y}(X)}}q \sum_{j=2}^d |\beta_j|X^{j-1}|\B y|^{d-j}
		\ll\left(\frac{X^s}{q^sd(\Lambda_{\B y})}+1\right)q \sum_{j=2}^d |\beta_j|X^{j-1}|\B y|^{d-j}.
	\end{align*}	
	The desired bound follows now upon applying the trivial bound $S_{\B y}(q,\B a) \ll q^s$. 
\end{proof}

In particular, when $\ba \in \fr P_{\B y}(X;\omega)$, inserting the conditions \eqref{5.1} into \eqref{5.3} shows that
\begin{align*}
	\left|T_{\B y}(\ba) -  \frac{S_{\B y}(q, \B a)}{q^s} \frac{v_{\B y}(\bb, X)}{d (\Lambda_{\B y})}\right| \ll  X^{s-1}W^2.
\end{align*}
Since
\begin{align*}
	\vol \fr P_{\B y}(X;\omega) &\ll \sum_{q=1}^{W}\prod_{j=2}^d q X^{-j}W	\ll X^{-(D-1)}W^{2d-1},
\end{align*}
it follows that
\begin{align}\label{5.4}
	\int_{\fr P_{\B y}(X; \omega)} T_{\B y}(\ba) \D \ba& = \sum_{q=1}^{W}  \dsum{\B a = 0}{(\B a, q)=1}^{q-1} \frac{S_{\B y}(q, \B a)}{q^s} \int_{\substack{|\beta_j| \le  X^{-j}W \\ (2 \le j \le d)}} \frac{v_{\B y}(\bb, X)}{d(\Lambda_{\B y})} \D \bb + O \left( X^{s-D}W^{2d+1} \right).
\end{align}

As usual, the growth rate of the main term in the asymptotic formula comes from the contribution of $v_{\B y}(\bb, X)$. 
Setting $\gamma_j = X^{j}\beta_j$ for $2 \le j \le d$, the identity
\begin{align}\label{5.5}
	v_{\B y}(\bb, X) =X^{s} v_{\B y}(\bg, 1),
\end{align}
follows from \eqref{5.2} by applying integration by parts, and in the same manner one finds further that
\begin{align*}
  	\int_{\substack{|\beta_j| \le X^{-j}W  \\ (2 \le j \le d)} }v_{\B y}(\bb, X) \D \bb = X^{s-(D-1)} \int_{|\bb| \le W } v_{\B y}(\bb, 1) \D \bb.
\end{align*}
Let
\begin{align*}
    \fr J_{\B y}(W) = \int_{[-W,W]^{d-1}} \frac{v_{\B y}(\bb, 1)}{d(\Lambda_{\B y})} \D \bb \qquad \text{ and }\qquad  \fr S_{\B y}(W) = \sum_{q=1}^{W} q^{-s} \sum_{\substack{\B a = 0 \\ (\B a, q)=1}}^{q-1} S_{\B y}(q, \B a),
\end{align*}
then we can rewrite \eqref{5.4} in the shape 
\begin{align}\label{5.6}
  	\int_{\fr P_{\B y}(X; \omega)} T_{\B y}(\ba) \D \ba &= X^{s-D+1} \fr S_{\B y}(W) \fr J_{\B y}(W) + O\left( X^{s-D}W^{2d+1}\right).
\end{align}


In order to understand the main term in~\eqref{5.6}, we extend the  truncated singular integral $\fr J_{\B y}(W)$ and the truncated singular series $\fr S_{\B y}(W)$ to infinity by taking the limits $X \rightarrow \infty$ in both expressions. In our analysis of these limits, the notations $\bb_j = (\beta_j, \ldots, \beta_d)$ and $\B a_j = (a_j, \ldots, a_d)$ ($2 \le j \le d$) will prove useful. 

We start by considering the singular integral.
\begin{lem}\label{L5.2}
    We have
    \begin{align*}
        |v_{\B y}(\bb, 1)| \ll \min_{2 \le j \le d}  |\B y|^{D_{d-j}/\sigma_j}\mu_1^{(d-j+1)/\sigma_j}(1+ |\bb_j|)^{-1/\sigma_j+\eps}.
    \end{align*}
\end{lem}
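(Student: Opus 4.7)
\textbf{Proof plan for Lemma~\ref{L5.2}.} My plan is to establish the bound for each $j \in \{2, \ldots, d\}$ separately by a scaling argument that reduces the oscillatory integral to an exponential sum, for which the minor arc bound of Lemma~\ref{L3.2} is already available. Fix such a $j$.

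First, I would exploit the scaling identity \eqref{5.5}: for any auxiliary scale $P > 0$, setting $\hat\beta_i = P^{-i}\beta_i$ gives $v_{\B y}(\bb, 1) = P^{-s} v_{\B y}(\hat\bb, P)$. Next, I would invoke Lemma~\ref{L5.1} with $q = 1$ and $\B a = \bm 0$, which yields the approximation
\begin{align*}
	v_{\B y}(\hat\bb, P) = d(\Lambda_{\B y})\, T_{\B y}(\hat\bb; P) + O\Big(P^{s-1} d(\Lambda_{\B y}) \Big(1 + d(\Lambda_{\B y})^{-1}\sum_{i=2}^d |\hat\beta_i| P^i |\B y|^{d-i} \Big)\Big),
\end{align*}
valid provided $P$ respects the mild constraint on the denominator in the hypothesis of Lemma~\ref{L5.1}. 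Combining the two steps reduces the problem to bounding $T_{\B y}(\hat\bb; P)$ at a judicious choice of $P$.

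Second, I would select $P$ so that $\hat\bb$ lies on the $j$-th level minor arcs $\fr m_{\B y}^{(j)}(P; \theta_j)$. The natural choice is to take $P$ maximal subject to the requirement that the major arc condition \eqref{4.1} at some coordinate $i \ge j$ fails. Writing $|\bb_j| = |\beta_{i_0}|$ for some $i_0 \in \{j, \ldots, d\}$, the key comparison is between $|\hat\alpha_{i_0}| = P^{-i_0}|\beta_{i_0}|$ and the major arc width $P^{-i_0 + \omega_{i_0}}|\B y|^{D_{d-i_0}}\mu_1^{d-i_0+1}$ appearing in \eqref{4.1}. Taking $P$ so that $P^{\omega_{i_0}}$ is comparable to $(1+|\bb_j|)/(|\B y|^{D_{d-i_0}}\mu_1^{d-i_0+1})$, I would check that no admissible $(q_{i_0}, a_{i_0})$ pair satisfies the approximation requirement (in particular, $a_{i_0} = 0$ is ruled out by the size of $|\hat\alpha_{i_0}|$, while $a_{i_0} \ne 0$ is ruled out by the denominator bound combined with the size of $Q_{i_0}$), so that $\hat\bb \notin \fr M^{(j)}_{\B y}(P; \theta_j)$. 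Applying Lemma~\ref{L3.2}(\ref{it:minor}) then yields $|T_{\B y}(\hat\bb; P)| \ll (P^s/d(\Lambda_{\B y})) P^{-k_j\theta_j + \eps}$, the singular case (\ref{it:sing}) being excluded by the hypothesis \eqref{3.8}.

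Third, I would assemble the bounds. Recalling the relation $\omega_j = \sigma_j k_j \theta_j$ from \eqref{4.4}, and using that $\sigma_{i_0} \le \sigma_j$ for $i_0 \ge j$, the combination of the three steps above yields
\begin{align*}
	|v_{\B y}(\bb, 1)| \ll P^{-k_j\theta_j + \eps} \ll |\B y|^{D_{d-j}/\sigma_j} \mu_1^{(d-j+1)/\sigma_j} (1 + |\bb_j|)^{-1/\sigma_j + \eps},
\end{align*}
after absorbing the (favourable) discrepancy between $\sigma_{i_0}$ and $\sigma_j$ into the $\eps$ in the exponent. Taking the minimum over $j$ gives the stated bound. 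The main obstacle is the delicate verification that the chosen scale $P$ simultaneously places $\hat\bb$ on the minor arcs \emph{and} keeps the error term of Lemma~\ref{L5.1} dominated by the main bound. This requires careful bookkeeping of the various exponents $\nu_i$ and $\omega_i$, together with the $|\B y|$- and $\mu_1$-weights appearing in the definition \eqref{4.1}, and in particular relies on the standing hypothesis $\psi < 1/(2d^2)$ to ensure that $d(\Lambda_{\B y})$ and $\mu_1$ remain small enough to absorb into the $\eps$.
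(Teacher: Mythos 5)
Your overall architecture matches the paper's: rescale via \eqref{5.5}, approximate $v_{\B y}$ by $T_{\B y}(\cdot\,;P)$ through Lemma~\ref{L5.1} with $q=1$ and $\B a=\bm 0$, arrange for the rescaled point to fall on the minor arcs, and then invoke the minor-arc bound of Lemma~\ref{L3.2}\eqref{it:minor}. However, there is a genuine gap at exactly the point you flag as the ``main obstacle'', and with your parameter choice it cannot be overcome. You keep the $\theta_i$ fixed and tune $P$ so that the rescaled point sits at the major/minor arc threshold; this forces $P$ to be comparable to a power of $1+|\bb_j|$ of the form $(1+|\bb_j|)^{1/\omega_{i_0}}$ (up to the $|\B y|$- and $\mu_1$-weights), with $1/\omega_{i_0}=1/(\sigma_{i_0}k_j\theta_j)$ by \eqref{4.2} and \eqref{4.4}. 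Since $k_j\theta_j>1$ for $j\ge 3$ (indeed $k_j\theta_j\gg D_{j-1}$ by \eqref{4.11}), this exponent is \emph{smaller} than $1/\sigma_{i_0}$, so the error term $P^{-1}|\bb|$ inherited from Lemma~\ref{L5.1} is not $o\big((1+|\bb_j|)^{-1/\sigma_j}\big)$; it fails already when $|\bb|=|\bb_j|$, and fails catastrophically when $|\bb|\gg|\bb_j|$ (e.g.\ $j=d$ with $|\beta_d|$ bounded but $|\beta_2|$ enormous), which is precisely the regime the minimum over $j$ in the statement is meant to address. A secondary defect of matching the threshold only at the single coordinate $i_0$ is that you obtain the weights $|\B y|^{D_{d-i_0}/\sigma_{i_0}}\mu_1^{(d-i_0+1)/\sigma_{i_0}}$ rather than the claimed ones, and these cannot be absorbed into an $\eps$ in the exponent of $(1+|\bb_j|)$.

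The paper resolves this by inverting the roles of the two parameters: it sets $P=|\bb|^A$ with $A$ arbitrarily large, so that the Lemma~\ref{L5.1} defect is $O(P^{-1}|\bb|)=O(|\bb|^{1-A})$ and is trivially negligible, and then uses the remaining freedom in $\theta_j$ (a genuinely free parameter in Lemma~\ref{L3.2}, subject only to \eqref{3.3}) to move the major-arc \emph{boundary} so that it passes through the rescaled point $\bg$. Concretely, $\theta_j$ is chosen so that $\max_{j\le i\le d}|\bb_i|/(c_jP^{\omega_i}|\B y|^{D_{d-i}}\mu_1^{d-i+1})=1$; the inequality at the single index $i=j$ then yields $P^{-k_j\theta_j}\ll|\bb_j|^{-1/\sigma_j}|\B y|^{D_{d-j}/\sigma_j}\mu_1^{(d-j+1)/\sigma_j}$ directly via \eqref{4.4}, which is the clean form you were unable to reach, and the continuity argument at the boundary replaces your case analysis over the pairs $(q_{i_0},a_{i_0})$. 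If you rework your second step along these lines — $P$ a large power of the full $|\bb|$, $\theta_j$ tuned to the boundary — the rest of your plan goes through.
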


\begin{proof}
    Fix $j$ with $2 \le j \le d$. For $|\bb_j| \le 1$ the claim is trivial, so we may assume that $|\bb_j| > 1$. Choose $P=|\bb|^A$ for some large parameter $A$ to be fixed later, and write $\bg = (P^{-2} \beta_2, \ldots, P^{-d} \beta_d)$ and $\bg_j = (\gamma_j, \ldots, \gamma_d)$. 
    Recalling \eqref{4.4}, we fix $\theta_j$ such that
    \begin{align*}
	   \max_{j \le i \le d } \frac{|\bb_i|}{c_j P^{\omega_i}  |\B y|^{D_{d-i}}\mu_1^{d-i+1}} = 1,
    \end{align*}
    so that 
    \begin{align}\label{5.7}
    	P^{ -k_j\theta_j} \ll |\bb_j|^{-1/\sigma_j} |\B y|^{D_{d-j}/\sigma_j}\mu_1^{(d-j+1)/\sigma_j}.
    \end{align}
    With this choice, $\bm \gamma_j$ lies in the major arcs $\fr M_{\B y}^{(j)}(P; \theta_j)$. Clearly, the major arcs are disjoint when $A$ is sufficiently large, so $\bg_j$ is best approximated by $q=1$ and $\B a_j= \bm 0$.  We therefore have from Lemma \ref{L5.1} and \eqref{5.5} that
    \begin{align}\label{5.8}
    	|v_{\B y}(\bb, 1)| \ll \left(\frac{P^s}{d(\Lambda_{\B y})}\right)^{-1} |T_{\B y}(\bg; P)| + P^{-1}|\bb|.
    \end{align} 
    
    On the other hand, $\bm \gamma$ lies just on the boundary of the major arcs and thus by continuity the minor arcs bound continues to apply. Consequently, we obtain from Lemma~\ref{L3.2} and \eqref{5.7} the complementary estimate 
    \begin{align*}
    	|T_{\B y}(\bg; P)| \ll \left( \frac{P^s}{d(\Lambda_{\B y})}\right)P^{-k_j \theta_j + \eps} \ll \left( \frac{P^s}{d(\Lambda_{\B y})}\right) P^{\eps}|\bb_j|^{-1/\sigma_j}  |\B y|^{D_{d-j}/\sigma_j}\mu_1^{(d-j+1)/\sigma_j}.
    \end{align*}
    Inserting this into \eqref{5.8} leads to 
    \begin{align*}
	   |v_{\B y}(\bb, 1)| \ll P^{\eps}|\bb_j|^{-1/\sigma_j+\eps}   |\B y|^{D_{d-j}/\sigma_j}\mu_1^{(d-j+1)/\sigma_j} + P^{-1}|\bb|,
    \end{align*}
   	and upon recalling that $P=|\bb|^A \ge |\bb_j|^A$, this reproduces the desired estimate whenever $A$ is sufficiently large.
\end{proof}

It follows from Lemma~\ref{L5.2} that for any tuple $\lambda_{2}, \ldots, \lambda_{d} \in [0,1]$ satisfying $\lambda_{2} + \ldots + \lambda_{d}=1$ we have
\begin{align*}
	\int_{\substack{\substack{\bb \in \R^{d-1} \\ |\bb| > W}}}\frac{|v_{\B y}(\bb, 1)|}{d(\Lambda_{\B y})} \D \bb
	&\ll \frac{1}{d(\Lambda_{\B y})} \int_{\substack{\substack{\bb \in \R^{d-1} \\ |\bb| > W}}} \prod_{j=2}^d\left( |\B y|^{D_{d-j}/\sigma_j}\mu_1^{(d-j+1)/\sigma_j} (1+ |\bb|)^{- 1/\sigma_j+\eps}\right)^{\lambda_{j}}\D \bb.
\end{align*} 
The set of all $\bb \in \R^{d-1}$ having $|\bb|=r$ has volume $O(r^{d-2})$. Recalling that $\mu_1 \ll d(\Lambda_{\B y})$, it follows that the above integral is bounded by   
\begin{align*}
	\int_{\substack{\substack{\bb \in \R^{d-1} \\ |\bb| > W}}}\frac{|v_{\B y}(\bb, 1)|}{d(\Lambda_{\B y})} \D \bb
	&\ll  |\B y|^{\kappa_1 }\mu_1^{-1+\kappa_2}  \int_{r > W}  (1+ r) ^{-\kappa_3+d-2+ \eps}\D r,
\end{align*} 
where
\begin{align*}
	\kappa_1 = \sum_{j=2}^d \frac{D_{d-j}\lambda_j}{\sigma_j}, \qquad \kappa_2 = \sum_{j=2}^d \frac{(d-j+1)\lambda_j}{\sigma_j}, \qquad\kappa_3  =\sum_{j=2}^d \frac{\lambda_j}{\sigma_j} .
\end{align*}
The integral converges if we can pick $\lambda_2, \ldots, \lambda_d$ in such a way that $\kappa_3> d-1$.
We take $\lambda_j = \sigma_j$ for $j \ge 3$, so that $\lambda_2 = 1 - \Sigma_3=\sigma_2 + (1-\Sigma_2)$. With this choice, the desired inequality $\kappa_3 > d-1$ is satisfied if $\Sigma  < 1$, and we have
\begin{align*}
	\kappa_3 = d-1 + \frac{1-\Sigma_2}{\sigma_2} = d+\frac{1-\Sigma-\sigma}{\sigma}.
\end{align*}
Moreover, using these values in our expression for $\kappa_1$ and $\kappa_2$ we obtain
\begin{align*}
	\kappa_1 =  \Delta_2 + D_{d-2} + D_{d-2}\frac{1-\sigma - \Sigma}{\sigma}, \qquad \kappa_2 =  D-1+ (d-1)\frac{1-\sigma - \Sigma}{\sigma}.
\end{align*}
Upon referring to \eqref{2.4}, this allows us to conclude that
\begin{align}\label{5.9}
    \fr J_{\B y} - \fr J_{\B y}(W) & \ll|\B y|^{\Delta_2 + D_{d-2}+ (d-1)(D-2)+ (D_{d-2}+(d-1)^2){\textstyle \frac{1-\sigma -\Sigma}{\sigma}}}  W^{-1-{\textstyle \frac{1-\sigma -\Sigma}{\sigma}}+\eps} \nonumber \\
    &\ll |\B y|^{\frac13(2 d^3 - 11 d + 9) + \frac12(3d^2-7d+4){\textstyle \frac{1-\sigma -\Sigma}{\sigma}}} W^{-1-{\textstyle \frac{1-\sigma -\Sigma}{\sigma}}+\eps},
\end{align}
and we have the bound 
\begin{align}\label{5.10}
	\fr J_{\B y}(W) & \ll |\B y|^{\frac13(2 d^3 - 11 d + 9) + \frac12(3d^2-7d+4){\textstyle \frac{1-\sigma -\Sigma}{\sigma}}}
\end{align}
uniformly in $W$.

The next step is to complete the truncated singular series. 
\begin{lem}\label{L5.3}
    The terms of the singular series are bounded by
    \begin{align*}
		|q^{-s}S_{\B y}(q, \B a) | \ll \min_{2 \le j \le d} q^{\eps}\left(\frac{q}{(q, \B a_j)} \right)^{-1/\sigma_j} |\B y|^{D_{d-j}/\sigma_j}\mu_1^{(d-j+1)/\sigma_j}.
    \end{align*}
\end{lem}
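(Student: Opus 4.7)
The structure of Lemma~\ref{L5.3} closely mirrors that of Lemma~\ref{L5.2}, with the quantity $q/(q, \B a_j)$ playing the role previously played by $|\bb_j|$. The plan is to imitate the proof of Lemma~\ref{L5.2}: relate the complete exponential sum $S_{\B y}(q, \B a)$ to the smooth sum $T_{\B y}(\B a/q; P)$ at a carefully chosen auxiliary scale $P$, and then invoke the minor arc bound of Lemma~\ref{L3.2} at the boundary of the major arcs.

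Fix $j$ with $2 \le j \le d$ and set $q_j^* = q/(q, \B a_j)$. Observe that the smallest value of $Q_j$ for which $\ba = \B a/q$ admits a hierarchical approximation of the form required by \eqref{4.1} with all $\beta_i = 0$ ($i \ge j$) is exactly $q_j^*$: this follows from the identity $\lcm_{i \ge j}(q/(q, a_i)) = q/(q, \B a_j)$. I would therefore choose the scale $P$ together with $\theta_j$ (and, via \eqref{4.2}, the accompanying $\theta_i$ for $i > j$) by calibrating
\[
    q_j^* \asymp P^{\omega_j}|\B y|^{D_{d-j}}\mu_1^{d-j+1},
\]
so that $\B a/q$ lies on the threshold of $\fr M_{\B y}^{(j)}(P; \theta_j)$. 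By continuity the minor arc estimate of Lemma~\ref{L3.2} then applies at this scale, giving
\[
    |T_{\B y}(\B a/q; P)| \ll \frac{P^s}{d(\Lambda_{\B y})}P^{-k_j\theta_j + \eps}.
\]

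To transfer this back to $S_{\B y}(q, \B a)$, I would apply Lemma~\ref{L5.1} with $\ba = \B a/q$ so that $\bb = \bm 0$. Since $v_{\B y}(\bm 0, P) = \vol \cal L_{\B y}(P) \asymp P^s$, the lemma produces
\[
    \left|\frac{S_{\B y}(q, \B a)}{q^s}\right| \ll \frac{d(\Lambda_{\B y})}{P^s}|T_{\B y}(\B a/q; P)| + O\!\left(\frac{d(\Lambda_{\B y}) q}{P}\right).
\]
Taking $P$ to be a sufficiently large polynomial in $q$, $|\B y|$ and $\mu_1$ ensures both the hypothesis $q \le P^{1-\psi(d-1)}$ of Lemma~\ref{L5.1} and that the secondary error is absorbed at the cost of a factor $q^{\eps}$. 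Using the identity $\omega_j = \sigma_j k_j \theta_j$ from \eqref{4.4} together with the calibration of $P$ then yields
\[
    P^{-k_j\theta_j} = P^{-\omega_j/\sigma_j} \asymp (q_j^*)^{-1/\sigma_j}|\B y|^{D_{d-j}/\sigma_j}\mu_1^{(d-j+1)/\sigma_j},
\]
which is the desired bound for this fixed $j$. Taking the minimum over $j$ establishes the lemma.

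The main technical subtlety is verifying that, for the chosen $P$, the approximation $\B a/q$ really does sit on the boundary of $\fr M_{\B y}^{(j)}(P; \theta_j)$, so that the minor arc bound of Lemma~\ref{L3.2} is genuinely applicable by continuity. This is handled exactly as in the proof of Lemma~\ref{L5.2}: by choosing the implicit constants in the definition of the major arcs small enough that the hierarchical major arcs are disjoint, the decomposition of $q_j^*$ into a product $q_j \cdots q_d$ satisfying \eqref{4.1} is uniquely determined, placing $\B a/q$ precisely at the threshold and preventing a strictly better approximation from pulling the point back into the interior of the major arcs. Once this has been justified, the remainder of the argument parallels the proof of Lemma~\ref{L5.2} term for term.
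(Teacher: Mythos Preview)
Your proposal is correct and follows essentially the same route as the paper: fix an auxiliary scale $P=q^A$ with $A$ large, calibrate $\theta_j$ so that $\B a/q$ lies on the boundary of $\fr M_{\B y}^{(j)}(P;\theta_j)$, apply Lemma~\ref{L5.1} with $\bb=\bm 0$ (using $v_{\B y}(\bm 0,1)\asymp 1$) to pass from $q^{-s}S_{\B y}(q,\B a)$ to $T_{\B y}(\B a/q;P)$, and then invoke the minor-arc estimate of Lemma~\ref{L3.2}. The only cosmetic difference is that the paper calibrates via $\displaystyle\max_{j\le i\le d}\frac{q/(q,\B a_i)}{c_j^{d-i}P^{\omega_i}|\B y|^{D_{d-i}}\mu_1^{d-i+1}}=1$ rather than at level $j$ alone, which guarantees directly that $\B a/q$ sits on the boundary of the full nested major arc and thereby streamlines the threshold argument you allude to in your final paragraph.
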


\begin{proof}
    For $q=1$ the estimate is trivial, so we may suppose that $q > 1$. Fix $P = q^A$ for some large $A$ to be determined later. For any $j$ with $2 \le j \le d$ fix $\theta_j$ such that
    \begin{align*}
    	\max_{j \le i \le d}\frac{q/(q, \B a_i)}{ c_j^{d-i} P^{\omega_i}|\B y|^{D_{d-i}} \mu_1^{d-i+1}}=1,
    \end{align*}
    so that in particular
    \begin{align}\label{5.11}
     	P^{-k_j \theta_j}	 \ll\left( \frac{q}{(q,\B a_j)}\right)^{-1/\sigma_j}|\B y|^{D_{d-j}/\sigma_j}\mu_1^{(d-j+1)/\sigma_j}
    \end{align}
    and $\B a_j/q \in \fr M_{\B y}^{(j)}(P; \theta_j)$. Note that by taking $A$ sufficiently large we may ensure that the major arcs $\fr M_{\B y}^{(j)}(P; \theta_j)$ are disjoint, so $\B a_j/q$ is best approximated by itself. Applying Lemma~\ref{5.1} and \eqref{5.5} with $\bb = \bm 0$ and observing that $v_{\B y}(\bm 0, 1) \asymp 1$, it follows that
    \begin{align}\label{5.12}
    		q^{-s}S_{\B y}(q, \B a)  \ll \left(\frac{P^s}{d(\Lambda_{\B y})}\right)^{-1} |T_{\B y}(q^{-1} \B a; P)| + P^{-1}q .
    \end{align}
    At the same time, $\B a_j/q$ can be viewed as lying just on the boundary of the major arcs in the $q$-aspect. As before, this implies that Lemma \ref{L3.2} and \eqref{5.11} furnish the additional minor arcs bound 
    \begin{align*}
    	|T_{\B y}(q^{-1} \B a; P)|\ll d(\Lambda_{\B y})^{-1}P^{s-k_j \theta_j+\eps} \ll\frac{P^{s+\eps}}{d(\Lambda_{\B y})} \left(\frac{q}{(q, \B a_j)}\right)^{-1/\sigma_j}|\B y|^{D_{d-j}/\sigma_j}\mu_1^{(d-j+1)/\sigma_j},
    \end{align*}
   and on substituting this into \eqref{5.12} we discern that
    \begin{align*}
    		q^{-s}S_{\B y}(q, \B a) \ll P^{\eps}\left(\frac{q}{(q, \B a_j)}\right)^{-1/\sigma_j}|\B y|^{D_{d-j}/\sigma_j}\mu_1^{(d-j+1)/\sigma_j} + P^{-1}q \qquad (2 \le j \le d).
    \end{align*}
    Recalling that $P=q^A$, it is clear that for $A$ sufficiently large the first term dominates.
\end{proof}

Lemma~\ref{L5.3} implies that the singular series may be extended to infinity.
Let $\tau_2, \ldots, \tau_d$ be natural numbers with the property that $\tau_j | \tau_{j+1}$ for $2 \le j \le d-1$ and $\tau_d|q$. For any $j$ the number of choices of $\B a \mmod q$ satisfying $(q, \B a_j)= \tau_j$ is $O(q^{d-1}/\tau_j^{d-j+1})$. It thus follows that we have
\begin{align*}
	\sum_{q=1}^W \sum_{\substack{\B a= 0 \\ (\B a, q)=1}}^{q-1} q^{-s} |S_{\B y}(q, \B a)|
    & \ll \sum_{q=1}^W  \sum_{\tau_2| \ldots| \tau_d | q}   \min_{2 \le j \le d} q^{j-2+\eps} \left(\frac{q}{\tau_j}\right)^{d-j+1-1/\sigma_j}  |\B y|^{D_{d-j}/\sigma_j}\mu_1^{(d-j+1)/\sigma_j}\\
    &\ll \sum_{q=1}^W q^{d-1+\eps} \prod_{j=2}^d \left(q^{-1/\sigma_j} |\B y|^{D_{d-j}/\sigma_j}\mu_1^{(d-j+1)/\sigma_j}\right)^{\lambda_j}
\end{align*}
for any choice of $\lambda_2, \ldots, \lambda_d \in [0,1]$ with $\lambda_2+ \ldots + \lambda_d=1$. Just like in the treatment of the singular integral, we can take $\lambda_j = \sigma_j$ for $3 \le j \le d$, and $\lambda_2 = 1 - \Sigma_3$. 
This choice yields the bound
\begin{align}\label{5.13}
	\fr S_{\B y} - \fr S_{\B y}(W) &\ll |\B y|^{ \Delta_2 + D_{d-2}+ (d-1)(D-1)+ {\textstyle \frac{1-\sigma-\Sigma}{\sigma}}(D_{d-2}+(d-1)^2) } \sum_{q\ge W} q^{-1-{\textstyle \frac{1-\sigma-\Sigma}{\sigma}}+\eps} \nonumber\\
	&\ll |\B y|^{\frac23(d^3 - 4 d + 3) + \frac12(3d^2-7d+4){\textstyle \frac{1-\sigma -\Sigma}{\sigma}}} W^{-{\textstyle \frac{1-\sigma -\Sigma}{\sigma}}+\eps}
\end{align}
whenever we have $\Sigma+\sigma< 1$. Again, we recall that this last inequality is satisfied as a consequence of the more stringent condition \eqref{4.16}. In particular, we have the bound 
\begin{align}\label{5.14}
	\fr S_{\B y}(W) \ll |\B y|^{\frac23(d^3 - 4 d + 3) + \frac12(3d^2-7d+4){\textstyle \frac{1-\sigma -\Sigma}{\sigma}}},
\end{align}
which holds uniformly in $W$. 

We can now complete the singular series and integral. Here, from \eqref{5.9}, \eqref{5.10},  \eqref{5.13} and \eqref{5.14} and upon inserting our value $W=c X^{\omega}|\B y|^{D_{d-2}+(d-1)^2}$, we find that 
\begin{align}\label{5.15}
	|\fr J_{\B y} \fr S_{\B y} - \fr J_{\B y}(W) \fr S_{\B y}(W) |
	&\ll |\B y|^{\frac13(4 d^3 - 19 d + 15) + (3d^2-7d+4){\textstyle \frac{1-\sigma -\Sigma}{\sigma}}} W^{- {\textstyle \frac{1-\sigma -\Sigma}{\sigma}}+\eps}\nonumber\\
	&\ll X^{-\omega{\textstyle \frac{1-\sigma -\Sigma}{\sigma}}+\eps}|\B y|^{\frac13(4 d^3 - 19 d + 15) + \frac12(3 d^2 - 7 d + 4)        {\textstyle \frac{1-\sigma -\Sigma}{\sigma}}}.
\end{align}

It remains to collect our estimates. 
\begin{prop}\label{P5.4}
	Make the assumption \eqref{2.10} and suppose that the conditions \eqref{4.6}, \eqref{4.14}, \eqref{4.15} and \eqref{4.16} are satisfied. Moreover, assume \eqref{3.8}. In this case we have the asymptotic formula 
	\begin{align*}
		N_{\B y}(X)= X^{n-D} \left(\fr S_{\B y} \fr J_{\B y} + O(E(\B y, \theta))\right),
	\end{align*}
	where
	\begin{align}\label{5.16}
		E(\B y, \theta) &= \sum_{j=2}^dX^{-\delta_j-\nu}|\B y|^{\Delta_j+D_{d-j}+ (D_{d-j+1}+d-j)(d-1)} + X^{-1+(2d+1)\omega} |\B y|^{\frac12(6d^3-11d^2+d+4)} \nonumber\\
		& \qquad + X^{-\omega{\textstyle \frac{1-\sigma -\Sigma}{\sigma}}+\eps}|\B y|^{\frac13(4 d^3 - 19 d + 15) + \frac12(3 d^2 - 7 d + 4) {\textstyle \frac{1-\sigma -\Sigma}{\sigma}}}.
	\end{align}
\end{prop}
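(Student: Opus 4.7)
The proof of Proposition~\ref{P5.4} is essentially a matter of assembling the estimates built up over Sections \ref{S4} and \ref{S5}, and my plan is to carry out this assembly in three steps.

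First I would invoke Proposition~\ref{P4.4}: under the hypotheses \eqref{3.8}, \eqref{4.6}, \eqref{4.14}, \eqref{4.15} and \eqref{4.16} this gives
\begin{align*}
    N_{\B y}(X) = \int_{\fr M_{\B y}^{(2)}(X;\theta)} T_{\B y}(\ba) \D \ba + O\bigg(X^{s-(D-1)-\nu}\sum_{j=2}^d X^{-\delta_j}|\B y|^{\Delta_j+D_{d-j}}\mu_1^{D_{d-j+2}-2}\bigg).
\end{align*}
Substituting the bound $\mu_1 \ll d(\Lambda_{\B y}) \ll |\B y|^{d-1}$ from \eqref{2.4}, and using the telescoping identity $D_{d-j+2}-2 = D_{d-j+1}+(d-j)$ together with $s-(D-1)=n-D$, this error term matches the first sum in $E(\B y,\theta)$ exactly.

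Next I would handle the major arcs contribution. Since $W=cX^\omega|\B y|^{D_{d-2}+(d-1)^2}$ is, for $c$ large enough, wide enough that $\fr M_{\B y}^{(2)}(X;\theta)\subseteq \fr P_{\B y}(X;\omega)$ with complement inside $\fr m_{\B y}^{(2)}(X;\theta)$, the domain of integration may be enlarged to $\fr P_{\B y}(X;\omega)$ at no cost. Lemma~\ref{L5.1}, applied on each major arc with parameters supplied by \eqref{5.1}, then yields the representation \eqref{5.4}, and after the change of variables \eqref{5.5} this is rewritten as \eqref{5.6}. The additive cost of these two reductions is $X^{s-D}W^{2d+1}$; after dividing out $X^{n-D}$ and substituting the value of $W$, the identity $(2d+1)(D_{d-2}+(d-1)^2) = \tfrac12(6d^3-11d^2+d+4)$ reproduces the second term in \eqref{5.16}.

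Finally, it remains to pass from the truncated factors $\fr J_{\B y}(W)\fr S_{\B y}(W)$ to the completed singular integral and singular series. Here I would use the straightforward decomposition
\begin{align*}
    \fr J_{\B y}\fr S_{\B y} - \fr J_{\B y}(W)\fr S_{\B y}(W) = (\fr J_{\B y} - \fr J_{\B y}(W))\fr S_{\B y} + \fr J_{\B y}(W)(\fr S_{\B y} - \fr S_{\B y}(W))
\end{align*}
together with the tail bounds \eqref{5.9} and \eqref{5.13} and the uniform bounds \eqref{5.10} and \eqref{5.14}, whose validity rests on the inequality $\Sigma+\sigma<1$ that is already guaranteed by \eqref{4.16}. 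The resulting estimate is precisely \eqref{5.15}, which after inserting $W=cX^\omega|\B y|^{D_{d-2}+(d-1)^2}$ is the third term in \eqref{5.16}. Combining the three contributions completes the proof.

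The argument itself is not conceptually difficult; the main obstacle is bookkeeping, since the exponents of $X$, $|\B y|$ and $\mu_1$ appearing in the minor arc bound of Proposition~\ref{P4.4}, the major arc approximation \eqref{5.6}, and the tail estimate \eqref{5.15} all need to be normalised against the expected order $X^{n-D}$ and matched term by term against the expression $E(\B y,\theta)$ in \eqref{5.16}.
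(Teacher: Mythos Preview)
Your proposal is correct and follows exactly the route the paper takes: the paper's own proof simply cites Proposition~\ref{P4.4} together with \eqref{5.6} and \eqref{5.15}, and your three steps spell out precisely how these ingredients combine (including the replacement of $\fr M_{\B y}^{(2)}(X;\theta)$ by $\fr P_{\B y}(X;\omega)$, which the paper handles in the paragraph opening Section~\ref{S5}). The algebraic identities you record for the exponents --- $D_{d-j+2}-2=D_{d-j+1}+(d-j)$ and $(2d+1)(D_{d-2}+(d-1)^2)=\tfrac12(6d^3-11d^2+d+4)$ --- are the right checks to make and are consistent with the paper's bookkeeping.
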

\begin{proof}
	Recall that we had $n=s+1$. The statement now follows from Proposition~\ref{P4.4} together with \eqref{5.6} and \eqref{5.15}.
\end{proof}
Before concluding the section, we remark that the singular series and integral can be expressed in terms of solution densities of the system \eqref{2.2} over the real and $p$-adic numbers.  
Indeed, since under the hypotheses of the proposition the singular series is absolutely convergent, by standard arguments it can be written as an absolutely convergent Euler product $\fr S_{\B y} = \prod_{p} \chi_p$, where
\begin{align*}
	\chi_p &= \sum_{h=0}^{\infty} p^{-hs}\sum_{\substack{\B a = 1 \\(\B a, p)=1}}^{p^h} S_{\B y}(p^h, \B a) \\
	& = \lim_{H \to \infty}p^{H(D-1-s)} \#\{\B x \in \Lambda_{\B y}(p^H): \Phi^{(j)}_{\B y} (\B x) \equiv 0 \mmod{p^H} \text{ for }2 \le j \le d\}. 
\end{align*}
Upon recalling that $\Lambda_{\B y}(q)$ denotes the set of all $\B x \in (\Z/q\Z)^n$ that satisfy the congruence $\Phi_{\B y}^{(1)}(\B x) \equiv 0 \mmod q$, we see that the above can be re-written as 
\begin{align*}
	\chi_p &= \lim_{H \to \infty}p^{H(D-n)} \#\{\B x \in (\Z / p^H \Z)^n: \Phi^{(j)}_{\B y} (\B x) \equiv 0 \mmod{p^H} \text{ for }1 \le j \le d\}. 
\end{align*}
Thus, each factor $\chi_p$ reflects the solution density of \eqref{2.2} in $\Q_p$. 

For the singular integral we proceed in a similar manner. Recall that $\Phi_{\B y}^{(1)}$ is an invertible linear transformation. Consider the manifold $M(h) = \{ \bm \xi \in [-1,1]^n: \Phi_{\B y}^{(1)}(\bm \xi) = h\}$ with associated measure $\mu$, normalised such that $\mu(M(0))=d(\Lambda_{\B y})^{-1}$. Let now
\begin{align*}
	g(\bm \xi) = \int_{\R^{d-1}} e\left(\sum_{j=2}^d \eta_j \Phi^{(j)}_{\B y}(\bm \xi)\right) \D \bm \eta \qquad \text{and} \qquad
	f(h) = \int_{M(h)}  g(\bm \xi)  \D \mu(\bm \xi),
\end{align*}
so that $f(0) = \fr J_{\B y}$. 
The inverse Fourier transform of $f$ is given by 
\begin{align*}
	\cal F^{-1} f(\alpha)&=  \int_{[-1,1]^n} g(\bm \xi) e(\alpha \Phi_{\B y}^{(1)}(\bm \xi))\D \bm \xi,
\end{align*}
and upon taking the (regular) Fourier transform it follows from the Fourier inversion formula that 
\begin{align*}
	f(N) &= \int_{\R} \int_{[-1,1]^n} g(\bm \xi) e(\alpha (\Phi_{\B y}^{(1)}(\bm \xi) - N))\D \bm \xi \D \alpha. 
\end{align*}
Thus we conclude that 
\begin{align*}
	\fr J_{\B y} = f(0) = \int_{[-1,1]^n}\int_{\R^{d}} e\left(\sum_{j=1}^d \eta_j \Phi^{(j)}_{\B y}(\bm \xi)\right) \D \bm \eta \D \bm \xi.
\end{align*}
One can now show by standard arguments (for instance Lemma~2 and \S 11 in \cite{wms82-quad}) that this expression indeed describes the solution density of \eqref{2.2} over the real unit hypercube.

\section{Endgame}\label{S6}

The quantities $\sigma_j$ and $\Sigma_j$ can be expressed in terms of $s$ itself. It is a straightforward exercise to confirm the identities  
\begin{align}\label{6.1}
	\sum_{n=1}^N n2^n = 2^{N+1}(N-1)+2 \qquad \text{ and } \qquad \sum_{n=1}^N n^2 2^n = 2^{N+1}(N^2-2N+3)-6.
\end{align} 
Note that \eqref{3.8} transforms into 
\begin{align*}
	\frac{1}{k_j} > \frac{2^{j-1}}{s-\rho} + (d-1)\varpi.
\end{align*}
Using this within \eqref{4.3}, an application of \eqref{6.1} produces the bounds
\begin{align*}
	\sigma_j &> 
	 \frac{2^d(d-2)-2^{j-1}(j-3) }{s-\rho} + \varpi(d-1)\frac{d(d-1)- (j-1)(j-2)}{2}
\end{align*}
and 
\begin{align*}
	\Sigma_j &> 
	\frac{2^d(d^2-2d+2-j(d-2))+2^{j-1}(j-5)}{s - \rho}\\
	& \qquad+ \varpi \frac{(d-1) (d - j+1) (d - j+2) (2 d + j-3)}{6},
\end{align*}
which we require to hold for all indices $j$ in our range $2 \le j \le d$. For the sake of simplicity we replace all these bounds by 
\begin{align}\label{6.2}
	\begin{gathered}
	\frac{1}{k_j} > \frac{2^{d-1}}{s-\rho} + (d-1)\varpi, \qquad\sigma_j > \frac{2^d (d-1)}{s-\rho} + \frac{\varpi d(d-1)^2}{2},  \\
	 \Sigma_j > \frac{2^d (d^2-4d+6)}{s-\rho}+ \frac{\varpi (2d-1)d(d-1)^2}{6}.
	 \end{gathered}
\end{align}
This allows us to state a first result.
\begin{thm}\label{T6.1}
	Let $F \in \Z[x_1, \ldots, x_n]$ be a non-singular form of degree $d \ge 5$ defining a hypersurface $\cal V$. Let further $\psi>0$ be a parameter satisfying 
	\begin{align}\label{6.3}
		\psi^{-1}> {\textstyle d^4 + \frac32 d^3 - \frac{11}{2} d^2 + d + 2},  
	\end{align} 
	and set
	\begin{align*}
		n_1(\psi)&=  \frac{2^{d-1}\left(d^3 + \frac12d^2 - \frac{11}{2} d + 10 - \psi p_6(d)\right)}{1-(d^4 + \frac32 d^3 - \frac{11}{2} d^2 + d + 2)\psi}, 
	\end{align*}	
	where $p_6(d)=\frac{1}{12}(50 d^6 - 171 d^5 + 88 d^4 + 517 d^3 - 732 d^2 + 8 d - 120)$.
	For some integer $\rho \in [1,n]$ suppose that $n-\rho > n_1(\psi)$. Then there exists a real positive number $\nu$ with the property that 
	\begin{align*}
		N_{\B y}(X) = X^{n-\frac12 d(d+1)}\fr S_{\B y} \fr J_{\B y} + O(X^{n-\frac12 d(d+1) - \nu}) 
	\end{align*}
	uniformly for all $\B y \in \cal V_{2,\rho}(\Z)$ satisfying $|\B y| \le X^{\psi}$, and the factors satisfy $0 \le \fr S_{\B y} \ll_{\B y} 1$ and $0 \le \fr J_{\B y} \ll_{\B y} 1$.
\end{thm}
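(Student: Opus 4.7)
The plan is to apply Proposition~\ref{P5.4} with parameters chosen so as to make every term in the error $E(\B y, \theta)$ of \eqref{5.16} a strictly negative power of $X$. Once the inequalities \eqref{2.10}, \eqref{3.8}, \eqref{4.6}, \eqref{4.14}, \eqref{4.15} and \eqref{4.16} are secured, the upper bounds $0 \le \fr S_{\B y} \ll_{\B y} 1$ and $0 \le \fr J_{\B y} \ll_{\B y} 1$ follow from \eqref{5.10} and \eqref{5.14} together with the absolute convergence statements in \eqref{5.9} and \eqref{5.13}.

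My first task is to choose $k_j$ as large as \eqref{3.8} allows, by taking $1/k_j$ just above $2^{d-1}/(n-\rho-1) + (d-1)\varpi$ for a small positive parameter $\varpi$ (eventually fixed as a function of $d$). Plugging these values into \eqref{4.3} and using the summation identities \eqref{6.1}, one obtains explicit expressions for $\sigma_j$ and $\Sigma_j$; the uniform $j$-independent versions of these are precisely the bounds recorded in \eqref{6.2}. In particular, each of $\sigma_j$, $\Sigma_j$ and $1/k_{j-1}$ is of order $2^d/(n-\rho-1)$ to leading order in $\varpi$.

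Among the remaining conditions I expect \eqref{4.14} to be the binding one: condition \eqref{4.6} requires only $k_d > \tfrac12 d(d+1) - 1 + \delta_d$, which is comfortably implied by our choice of $k_j$, while \eqref{4.16} gives the substantially weaker bound $n-\rho \gg 2^d(d^2-3d+5)$. I would then substitute the numerical values of $\sigma_j$, $\Sigma_j$ and $1/k_{j-1}$ into \eqref{4.14} and clear denominators; this produces for each $j \in \{3, \ldots, d\}$ a linear inequality in $n-\rho-1$ of the form $(n - \rho - 1)(1 - A_j(d)\psi) > 2^{d-1}(B_j(d) - C_j(d)\psi)$, and a straightforward comparison of the coefficients $A_j$, $B_j$, $C_j$ singles out the most restrictive $j$, which after routine rearrangement gives precisely $n - \rho > n_1(\psi)$ in the notation of the theorem, with $p_6(d)$ emerging as the coefficient of $\psi$ in that rearrangement. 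Condition \eqref{4.15} reduces, for $\varpi$ small, to a further bound on $\psi$ implied by \eqref{6.3}, and the same is true of \eqref{2.10}.

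The final step is to pick $\delta_j$ and $\omega$ so that all three terms in \eqref{5.16} are $O(X^{-\nu})$. Using $|\B y| \le X^\psi$ one converts each $|\B y|^c$ into $X^{c\psi}$, and the admissible range for $\omega$ becomes an interval whose endpoints depend polynomially on $\psi$ and $d$: the second term of \eqref{5.16} requires $\omega < \bigl(1 - \tfrac12 \psi(6d^3 - 11d^2 + d + 4)\bigr)/(2d+1)$, the third term requires $\omega \cdot (1-\sigma-\Sigma)/\sigma$ to exceed a certain explicit multiple of $\psi$, and the first term only demands that each $\delta_j$ be a fixed positive constant exceeding $\psi(\Delta_j + D_{d-j} + (D_{d-j+1}+d-j)(d-1))$. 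The condition \eqref{6.3} is exactly what ensures these endpoints are compatible. The main obstacle is the simultaneous bookkeeping of all these nested inequalities and the careful verification that the binding case of \eqref{4.14} yields precisely the function $n_1(\psi)$ displayed in the statement, rather than merely a function of the same order of growth; the rest is a direct application of the machinery assembled in Sections~\ref{S2}--\ref{S5}.
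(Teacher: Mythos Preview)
Your plan correctly identifies Proposition~\ref{P5.4} as the engine and \eqref{6.2} as the relevant uniform bounds on $\sigma_j$, $\Sigma_j$, $1/k_j$. However, there is a structural gap in how you handle the parameter $\varpi$, and it is precisely this gap that prevents you from arriving at the displayed function $n_1(\psi)$.

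You propose to take $\varpi$ to be a small positive constant ``eventually fixed as a function of $d$'' and then argue that \eqref{4.15} and \eqref{4.16} become innocuous (a bound on $\psi$ implied by \eqref{6.3}, or the weak constraint $n-\rho \gg 2^d(d^2-3d+5)$ coming from $\Sigma+\sigma<1$). This is not correct. In \eqref{4.15} the right-hand side is of order $\varpi/\psi$ while the left is of order $2^d d/(s-\rho)$ (up to lower-order $\varpi$-terms), so \eqref{4.15} actually forces a bound of the shape $s-\rho \gg 2^d d\,\psi/\varpi$. Similarly, combining the requirement $k_2\theta_2 > \psi/\varpi$ from \eqref{4.16} with the upper bound on $k_2\theta_2$ imposed by the second term of \eqref{5.16} yields a constraint of the same shape. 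These bounds \emph{blow up} as $\varpi \to 0$, so you cannot simply take $\varpi$ small.

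On the other hand, when \eqref{6.2} is substituted into \eqref{4.14} (after the simplification to \eqref{6.4}), the resulting bound on $s-\rho$ has $\varpi$-dependent terms in the denominator and therefore \emph{increases} with $\varpi$. Thus \eqref{4.14} on one side and \eqref{4.15}, \eqref{4.16} on the other pull in opposite directions, and the function $n_1(\psi)$ --- with its specific $\psi$-dependence and the polynomial $p_6(d)$ --- arises precisely from optimising $\varpi$ as a function of $\psi$ so as to balance these two families of constraints (in the paper's notation, setting $a_1(\varpi)=b_2(\varpi,\psi)$ and solving for $\varpi=\varpi_0(\psi)$). Your claim that \eqref{4.14} alone, at the most restrictive $j$ and with $\varpi$ fixed independently of $\psi$, yields $n_1(\psi)$ cannot be right: any bound obtained from \eqref{4.14} via \eqref{6.2} with such a $\varpi$ would be constant in $\psi$, not the rational function of $\psi$ displayed in the statement.
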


\begin{pf}
	Our main task here is to bound the error terms given by \eqref{5.16} in the conclusion of Proposition \ref{P5.4}, while at the same time ensuring that the hypotheses of said proposition are satisfied. In order to control the first term in \eqref{5.16} we choose 
	\begin{align*}
		\delta_j = \psi (\Delta_j+D_{d-j}+ (D_{d-j+2} - 2)(d-1))
	\end{align*} 
	for $2 \le j \le d$. Thus, we have $\delta_d=(d-1)\psi$. 	
	With this choice, and recalling \eqref{2.10}, the bound in \eqref{4.6} is certainly majorised by $k_d > D$. In a similar manner, upon taking into account the uniform bounds \eqref{6.2} as well as the relations $D_j \le D$ and 
	\begin{align*}
		\delta_j \le \delta_2=\textstyle{\frac13}(2d^3 - 11d + 9)\psi
	\end{align*} 
	for all $j$, a modicum of computation reveals that for all $\psi$ satisfying \eqref{6.3} one has 
	\begin{align*}
		\frac{D_{j-1}-1+\delta_j}{1  - (D_{d-j+1}+(d-1)(d-j+1))\psi} \le \frac{d(d-1)}{2},
	\end{align*}
	and hence the condition \eqref{4.14} may be simplified to
	\begin{align}\label{6.4}
		\textstyle{\frac12 d(d-1)}\left(\sigma_j +k_{j-1}^{-1}\right) + \Sigma_j + \sigma_j<  1.
	\end{align}
	Upon inserting \eqref{6.2}, we see that the conditions \eqref{4.6}, \eqref{4.14} (as simplified to \eqref{6.4}) and \eqref{4.15} of Proposition~\ref{P5.4} are satisfied whenever $s - \rho > \max\{a_0(\varpi) , a_1(\varpi) , a_2(\varpi,\psi) \}$, where
	\begin{align*}
		a_0(\varpi) &= \frac{2^{d-2} d(d+1)}{1-\frac12 d(d^2-1)\varpi},\\
		a_1(\varpi)  &= \frac{2^{d-2}(2 d^3 + d^2 - 11 d + 20)}{1 - \frac{1}{12} (d - 1)^2 d (3 d^2 + d + 10)\varpi},\\
		a_2(\varpi,\psi)  &= \frac{2^{d-1}(2d-1)\psi}{\varpi(1- \frac{1}2d(d^2+d-2)\psi)}.
	\end{align*}
	For this to be defined, we require in particular that 
	\begin{align}\label{6.5}
		\varpi^{-1}> {\textstyle \frac{1}{12} (d - 1)^2 d (3 d^2 + d + 10)},
	\end{align} 
	which we will assume henceforth. 
		
	Meanwhile, to control the second and third term in \eqref{5.16} we require that 
	\begin{align}\label{6.6}
		\frac{\frac13(4 d^3 - 19 d + 15)\psi}{1-\Sigma-\sigma } + \frac{\frac12(3 d^2 - 7 d + 4)\psi }{\sigma}< k_2\theta_2 < \frac{1- \frac12(6d^3-11d^2+d+4)\psi }{(2d+1)\sigma},
	\end{align}
	while simultaneously the bound \eqref{4.16} should be satisfied.
	Upon re-writing, we see that the interval in \eqref{6.6} is non-empty when 
	\begin{align}\label{6.7}
		\left(1+\frac{ \frac13 (8 d^4 + 4 d^3 - 38 d^2 + 11 d + 15)\psi}{1  - (6 d^3 - 11 d^2 + d + 4) \psi}\right)\sigma + \Sigma<1.
	\end{align}	 
	When $\psi$ satisfies \eqref{6.3} one can show for $d \ge 5$ that 
	\begin{align*}
		\frac{ \frac13 (8 d^4 + 4 d^3 - 38 d^2 + 11 d + 15)\psi}{1  -  (6 d^3 - 11 d^2 + d + 4) \psi}\le 8,
	\end{align*}
	and hence \eqref{6.7} may be simplified to $9\sigma+\Sigma<1$. 
	In combination with \eqref{6.2} this delivers the bound $s-\rho > b_1(\varpi)$ where
	\begin{align*}
		b_1(\varpi)  = \frac{2^d (d^2+5d-3)}{1 - \frac13 d(d-1)^2(d+13)\varpi}.
	\end{align*}	
	
	In order to handle the bound \eqref{4.16} one confirms that $\delta_2/(1-\sigma-\Sigma)$ is smaller than the first term on the left hand side of \eqref{6.6}, and hence \eqref{4.16} is compatible with the right hand side of \eqref{6.6} if the inequality
	\begin{align*}
		\psi\varpi^{-1} <\frac{1- \frac12(6d^3-11d^2+d+4)\psi }{(2d+1)\sigma}
	\end{align*}	
	is satisfied. Re-arranging yields
	\begin{align*}
		\frac{\psi(2d+1)\varpi^{-1}}{1-\frac12(6d^3-11d^2+d+4)\psi  }\sigma <1,
	\end{align*}	
	which upon inserting \eqref{6.2} delivers the bound $s-\rho > b_2(\varpi, \psi)$ where
	\begin{align*}
		b_2(\varpi, \psi)
		&= \frac{2^d(d-1)(2d+1) \psi }{\varpi (1-  ( d^4 + \frac32 d^3 - \frac{11}{2} d^2 + d + 2)\psi )}.
	\end{align*}
	
	Thus, altogether we have shown that the conclusion of the theorem follows if for some suitable value of $\varpi$ one has
	\begin{align*}
		s-\rho > \max\{a_0(\varpi), a_1(\varpi), a_2(\varpi, \psi), b_1(\varpi), b_2(\varpi, \psi) \}.
	\end{align*}
	We see that  $b_2(\varpi, \psi)>a_2(\varpi, \psi) $ for all admissible values of $\psi$ and $\varpi$. In a similar manner, when $d \ge 5$ a modicum of computation confirms that $a_1(\varpi) \ge \max\{a_0(\varpi) ,b_1(\varpi)\}$ for all admissible values of $\varpi$. 	
	One can compute (for instance with the help of a computer algebra programme) that $a_1(\varpi) = b_2(\varpi, \psi)$ when $\varpi = \varpi_0(\psi)$, where
	\begin{align*}
		 \varpi_0(\psi)=\frac{(d-1)(2d+1)\psi}{d^3 + \frac12d^2 - \frac{11}{2} d + 10 - \psi p_6(d)}.
	\end{align*}
	This quantity is increasing in $\psi$, and a final computation confirms that it is admissible within \eqref{6.5} for all values of $\psi$ satisfying \eqref{6.3}. Thus, for any given value of $\psi$ within the admissible range the bound $s-\rho > b_2(\psi, \varpi_0(\psi))$ dominates overall. Setting $ n_1(\psi)= b_2(\psi, \varpi_0(\psi))$ concludes the proof of Theorem~\ref{T6.1}.
\end{pf}

Theorem~\ref{T1.2} is a simplification of Theorem~\ref{T6.1}. Indeed, upon choosing $\psi = \psi_1$ with $\psi_1^{-1} =2d^4$ we find that 
\begin{align*}
	n_1(\psi_1)&=\frac{2^d( 24 d^7 - 38 d^6 + 39 d^5 + 152 d^4 - 517 d^3 + 732 d^2 - 8 d - 240) } { 24 d^4 - 36 d^3 + 132 d^2 - 24 d - 48} < 2^{d}d(d^2-1)
\end{align*}	
for all admissible values of $d$. Since the function $n_1(\psi)$ is increasing in $\psi$, this bound is sufficient for all $\psi < \psi_1$ also. This completes the proof of Theorem~\ref{T1.2}.\medskip


In order to obtain an estimate for $N_{\cal U}(X, X^\psi)$ and thus complete the proof of Theorems \ref{T1.3} and \ref{T1.4}, we need to sum over all values of $\B y \in \cal U(\Z)$ satisfying $|\B y| \le X^\psi$ and $F(\B y)=0$. 

\begin{thm}\label{T6.2}
	Let $F \in \Z[x_1, \ldots, x_n]$ be a non-singular form of degree $d \ge 5$ defining a hypersurface $\cal V$. Let further $\psi>0$ be a parameter satisfying 
	\begin{align}\label{6.9}
		\psi^{-1}>{\textstyle d^4 + \frac32 d^3 - 5 d^2 + \frac12 d + 2}.
	\end{align} 
	Set 
	\begin{align*}
		n_2(\psi) &=\frac{ 2^{d-1} \left(d^3 + \frac12d^2 - \frac{11}{2} d + 10 - q_6(d) \psi\right) }{1-(d^4 + \frac12 d^3 - \frac52 d^2 - 2 d + 2)\psi}
	\end{align*}
	where $q_6(d)=\frac{1}{12}(50 d^6 - 165 d^5 + 85 d^4 + 481 d^3 - 639 d^2 - 52 d + 240)$.
	For some integer $\rho$ in the range $\frac12d(d+1)+1 < \rho <n$ suppose that $	n-\rho> n_2(\psi)$.
	Then there exists a positive real number $\nu$ for which we have the asymptotic formula
	\begin{align}\label{6.10}
		N_{\cal V_{2, \rho}}(X, Y) = X^{n-D} \sum_{\substack{\B y \in \cal V_{2, \rho}(\Z) \\ |\B y| \le Y}}  \fr S_{\B y} \fr J_{\B y} + O((XY)^{n-D}X^{-\nu}),
	\end{align}
	and the factors satisfy $0 \le \fr S_{\B y} \ll_{\B y} 1$ and $0 \le \fr J_{\B y} \ll_{\B y} 1$.
\end{thm}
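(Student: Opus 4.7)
The proof combines the slicing identity \eqref{1.2} with the single-slice asymptotic of Proposition \ref{P5.4}, and then sums the contributions over $\B y \in \cal V_{2,\rho}(\Z)$ with $|\B y|\le Y$.

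The first step is to use \eqref{1.2} in the form
\begin{align*}
    N_{\cal V_{2,\rho}}(X,Y) = \sum_{\substack{\B y \in \cal V_{2,\rho}(\Z) \\ |\B y| \le Y}} N_{\B y}(X; \cal V_{2,\rho}),
\end{align*}
and then to replace $N_{\B y}(X;\cal V_{2,\rho})$ by $N_{\B y}(X)$. The discrepancy is controlled by the points of $\cal V^*_{2,\rho}(\Z)$ via \eqref{3.9}, giving $N_{\B y}(X; \cal V_{2, \rho}) = N_{\B y}(X) + O(X^{n-\rho})$. Combined with the trivial bound $\card\{\B y \in \cal V(\Z) : |\B y| \le Y\} \ll Y^{n-1}$, this substitution contributes only $O(Y^{n-1}X^{n-\rho})$ to the total error, which is absorbed into $O((XY)^{n-D}X^{-\nu})$ provided $\rho > D+1$ and $\psi(D-1) < 1$, both of which follow comfortably from \eqref{6.9} and the hypotheses of the theorem.

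Next, I would apply Proposition \ref{P5.4} uniformly in $\B y$ with $|\B y|\le Y = X^\psi$, along the same lines as in the proof of Theorem \ref{T6.1}, to obtain
\begin{align*}
    N_{\B y}(X) = X^{n-D}\bigl(\fr S_{\B y}\fr J_{\B y} + O(E(\B y,\theta))\bigr),
\end{align*}
with $E(\B y,\theta)$ given by \eqref{5.16}. Summing the main terms produces precisely the main term claimed in the theorem, and the task reduces to showing that
\begin{align*}
    \sum_{\substack{\B y \in \cal V(\Z) \\ |\B y|\le Y}} X^{n-D}E(\B y,\theta) \ll (XY)^{n-D}X^{-\nu}.
\end{align*}
Using the trivial bound $\sum_{|\B y|\le Y,\,\B y \in \cal V(\Z)}|\B y|^e \ll Y^{n-1+e}$, each of the three term-types in \eqref{5.16} imposes an inequality of the form $\psi(D-1+e) < \delta + \nu_0$ (with $\nu_0 > 0$ free and small), in place of $\psi e < \delta + \nu_0$ that was sufficient for the single-slice estimate of Theorem \ref{T6.1}. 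This calls for slightly tighter choices of the parameters $\delta_j$ and $\omega$, and consequently of $k_j, \theta_j$, throughout the hierarchy \eqref{4.6}, \eqref{4.14}, \eqref{4.15}, \eqref{4.16}.

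Running the optimisation exactly along the lines of Theorem \ref{T6.1}, but with each constraint adjusted to absorb the extra factor arising from the sum over $\B y$, produces a new family of admissibility functions $\tilde a_0(\varpi), \tilde a_1(\varpi), \tilde a_2(\varpi,\psi), \tilde b_1(\varpi), \tilde b_2(\varpi,\psi)$, and an optimal choice $\varpi = \tilde\varpi_0(\psi)$ equalising the two dominant constraints yields the combined bound $n-\rho > n_2(\psi)$, with the specific polynomial $q_6(d)$ and denominator coefficient $d^4 + \tfrac12 d^3 - \tfrac52 d^2 - 2d + 2$ emerging from that balance. The main obstacle will be the careful algebraic bookkeeping needed to propagate the modified $\psi$-dependence through each of \eqref{4.14}--\eqref{4.16}, to identify which of the resulting constraints dominates, and to verify that $\tilde\varpi_0(\psi)$ remains within the admissibility range for all $\psi$ satisfying \eqref{6.9}. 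Finally, the bounds $0 \le \fr S_{\B y} \ll_{\B y} 1$ and $0 \le \fr J_{\B y} \ll_{\B y} 1$ are inherited term-by-term from Proposition \ref{P5.4} and the analysis of \S\ref{S5}, just as in Theorem \ref{T6.1}.
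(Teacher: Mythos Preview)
Your overall strategy matches the paper's proof: slice via \eqref{1.2}, replace $N_{\B y}(X;\cal V_{2,\rho})$ by $N_{\B y}(X)$ at the cost of an error controlled by the dimension bound on $\cal V^*_{2,\rho}$, apply Proposition~\ref{P5.4} slice by slice, sum the resulting errors over $\B y$, and rerun the optimisation of Theorem~\ref{T6.1} with the $\delta_j$ inflated to absorb the extra $\psi$-power coming from the sum. The paper does exactly this, and the constraints from \eqref{4.6}, \eqref{4.14}, \eqref{4.15} indeed collapse to the same $a_0(\varpi),a_1(\varpi),a_2(\varpi,\psi)$ as before, with new functions (called $\beta_1,\beta_2$ in the paper) replacing $b_1,b_2$; the balance $a_1(\varpi)=\beta_2(\varpi,\psi)$ then gives $n_2(\psi)$.

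There is, however, one quantitative slip that prevents you from landing on the stated $n_2(\psi)$. You twice invoke the \emph{trivial} bound $\card\{\B y\in\cal V(\Z):|\B y|\le Y\}\ll Y^{n-1}$, and correspondingly write the constraint as $\psi(D-1+e)<\delta$. The paper instead uses Birch's theorem to get $N(Y)\ll Y^{n-d}$, so that after dividing by the target $(XY)^{n-D}$ the extra factor to absorb is $Y^{D-d}$ rather than $Y^{D-1}$. This is why the paper's choice of $\delta_j$ carries an additional summand $+\psi(D-d)$ (not $+\psi(D-1)$) relative to Theorem~\ref{T6.1}, and why the $|\B y|$-exponents in $U_2$ and $U_3$ become $3d^3-5d^2+2$ and $\tfrac16(8d^3+3d^2-41d+30)$ respectively. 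With your trivial bound these exponents would each be larger by $d-1$, and the resulting optimisation would produce a strictly larger threshold than the claimed $n_2(\psi)$; in particular the denominator coefficient $d^4+\tfrac12 d^3-\tfrac52 d^2-2d+2$ and the polynomial $q_6(d)$ would not emerge. The fix is simply to appeal to Birch's bound $N(Y)\ll Y^{n-d}$ (which is available since $n-\rho>n_2(\psi)$ comfortably exceeds $2^d(d-1)$) in place of the trivial $Y^{n-1}$ at both occurrences.
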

\begin{proof}
	Recall from Birch's theorem \cite{birch} that for $n > 2^d(d-1)$ the number of points $\B z \in \Z^n$ with $|\B z| \le Z$ and $F(\B z)=0$ is given by $N(Z) \ll Z^{n-d}$. 
	Upon combining \eqref{1.2} and Proposition~\ref{P5.4}, we find that
	\begin{align}\label{6.8}
		N_{\cal U}(X, X^\psi) = X^{n-D} \sum_{\substack{\B y \in \cal U(\Z) \cap \cal A(\rho) \\ |\B y| \le X^\psi\\F(\B y)=0}}  \fr S_{\B y} \fr J_{\B y} + O\left(E_{\cal A}(\psi)+E_{\cal B}(\psi)+E_{\cal U}(\psi)\right),
	\end{align}
	where
		\begin{align*}
	E_{\cal A}(\psi) &= X^{n-D}\sum_{\substack{\B y \in \cal U(\Z) \cap \cal A(\rho) \\ |\B y| \le X^\psi\\F(\B y)=0}} E(\B y, \theta), & E_{\cal B}(\psi) &= \sum_{\substack{\B y \in \cal U(\Z) \cap \cal B(\rho) \\0 <|\B y| \le X^\psi\\ F(\B y)=0  }} \sum_{\substack{\B x \in \cal U(\Z)  \\ |\B x| \le X \\ F(\B x)=0 }}1
	\end{align*}
	and
	\begin{align*}
		E_{\cal U}(\psi) = \sum_{\substack{ |\B y| \le X^\psi\\F(\B y)=0}} \sum_{\substack{\B x \in \cal V(\Z) \setminus \cal U(\Z) \\ |\B x| \le X}} 1 \ll X^{\dim \cal V \setminus \cal U + \psi(n-d)}.
	\end{align*}
	The choice $\cal U = \cal A(\rho) \cap \cal V = \cal V_{2,\rho}$ entails that $\dim \cal V \setminus \cal V_{2,\rho} = \dim \cal V^*_{2, \rho} \le n-\rho$, and we conclude that the error $E_{\cal U}(\psi)$ is acceptable within \eqref{6.8} if $\rho > D + \psi(D-d)$. In particular, it follows from \eqref{2.10} that the choice $\rho=D+1$ is permissible. Clearly, with this choice of $\cal U$ the  set $\cal B(\rho) \cap \cal U$ is empty and we can disregard the error term $E_{\cal B}(\psi)$.  Thus, it suffices to bound the error $E_{\cal A}(\psi)$. We have
	\begin{align*}
    	E_{\cal A}(\psi) &\ll X^{n-D} N(X^{\psi})\sup_{ |\B y| \le X^\psi} E(\B y, \theta) \ll (X^{1+\psi})^{n-D}(U_1+ U_2 + U_3),
	\end{align*}
	where
	\begin{align*}
      	U_1 &= \sum_{j=2}^d X^{-\delta_j+\psi(\Delta_j+D_{d-j}+ (D_{d-j+2}-2)(d-1)+D-d)-\nu},\\
      	U_2 &= X^{-1+(2d+1)\omega + (3d^3-5d^2+2)\psi }, \nonumber\\
      	U_3 &= X^{{\textstyle \frac{1-\sigma -\Sigma}{\sigma}}(-\omega + \frac12 (3 d^2 - 7 d + 4)\psi )  +\frac16(8 d^3 + 3 d^2 - 41 d + 30)\psi}.
	\end{align*}
	Assuming that
	\begin{align*}
    	\delta_j = \psi(\Delta_j+D_{d-j}+ (D_{d-j+2}-2)(d-1)+D-d) \qquad (2 \le j \le d),
	\end{align*}
	the exponent in the first term is negative. With this choice we have $\delta_d = (D-1) \psi$ and 
	\begin{align*}
		\delta_j \le \delta_2=\textstyle{\frac16} (4 d^3 + 3 d^2 - 25 d + 18)\psi.
	\end{align*} 
	As before, this choice allows us to simplify the conditions \eqref{4.6} and \eqref{4.14}, and we see that they and \eqref{4.15} are satisfied whenever $s-\rho> \max\{a_0(\varpi), a_1(\varpi), a_2(\varpi, \psi)\}$, with the same values as in the proof of Theorem \ref{T6.1}. 

	Meanwhile, the error terms $U_2$ and $U_3$ are acceptable if we can choose $\theta_2$ such that
	\begin{align}\label{6.11}
		\frac{\frac16(8 d^3 + 3 d^2 - 41 d + 30)\psi}{1-\sigma - \Sigma} +	\frac{ \frac12 (3 d^2 - 7 d + 4)\psi}{\sigma}< k_2 \theta_2< \frac{1-  (3d^3-5d^2+2)\psi}{(2d+1)\sigma},
	\end{align}
	and this interval can be seen to be non-empty if \eqref{6.9} is satisfied and further
	\begin{align}\label{6.12}
		\left(1+ \frac{\frac16 (16 d^4 + 14 d^3 - 79 d^2 + 19 d + 30)\psi}{1-\frac12 (12 d^3 - 21 d^2 +  d + 8)\psi}\right)\sigma + \Sigma < 1. 
	\end{align}
	When $\psi$ satisfies \eqref{6.9} one can show for $d \ge 5$ that 
	\begin{align*}
		\frac{\frac16 (16 d^4 + 14 d^3 - 79 d^2 + 19 d + 30)\psi}{1-\frac12 (12 d^3 - 21 d^2 +  d + 8)\psi}\le\frac{25}{3},
	\end{align*}
	and hence \eqref{6.12} can be simplified to $\frac{28}{3}\sigma+\Sigma<1$. Upon recalling \eqref{6.2} this gives $s-\rho> \beta_1(\varpi)$, where
	\begin{align*}
		\beta_1(\varpi) =  \frac{2^d (d^2+\frac{16}{3}d-\frac{10}{3})}{1 - \frac16 d(d-1)^2(2d+27)\varpi}.
	\end{align*}
	
	It remains to compare the right hand side of \eqref{6.11} with the bound of \eqref{4.16}.  As before, with our choice of $\delta_2$ we find that the first term in the maximum in \eqref{4.16} is bounded above by the left hand side of \eqref{6.11}. Thus, it suffices to ensure that the interval 
	\begin{align*}
		\psi \varpi^{-1} < k_2 \theta_2< \frac{1- (3d^3-5d^2+2)\psi}{(2d+1)\sigma}
	\end{align*}
	is non-empty. Such is the case when
	\begin{align*}
		\frac{(2d+1)\psi}{\varpi(1-(3 d^3 - 5 d^2 + 2)\psi)}\sigma<1,
	\end{align*}
	and on inserting \eqref{6.2} we obtain the bound $s-\rho > \beta_2(\varpi, \psi)$ where
	\begin{align*}
		\beta_2(\varpi, \psi)= \frac{2^d (d-1)(2d+1) \psi }{\varpi (1- (d^4 + \frac32 d^3 - 5 d^2 + \frac12 d + 2)\psi)}.
	\end{align*}
	When $d \ge 5$ one checks by a modicum of computation that $\beta_2(\varpi, \psi) \ge a_2(\varpi, \psi)$ and that $a_1(\varpi)$ exceeds both $\beta_1(\varpi)$ and $a_0(\varpi)$ in the appropriate ranges for $\varpi$ and $\psi$. Just as before, we see that $a_1(\varpi) = \beta_2(\varpi, \psi)$ when $\varpi=\varpi_1(\psi)$, where 
	\begin{align*}
		\varpi_1(\psi) = \frac{2(1+2d)(d-1)\psi}{d^3 + \frac12d^2 - \frac{11}{2} d + 10 -  q_6(d)\psi }.
	\end{align*}
	This is in accordance with \eqref{6.5}, so that just as before we obtain our final bound $s-\rho > n_2(\psi)$ where $n_2(\psi)= \beta_2(\varpi_1(\psi), \psi)$. This completes the proof of the theorem.	 
\end{proof}
As before, one can show that $n_2(\psi)$ is increasing in $\psi$, and by taking $\psi=\psi_1$ with $\psi_1^{-1} = 2d^4$ we see after some calculations that 
\begin{align*}
	n_2(\psi_1) &=\frac{2^d(24 d^7 - 38 d^6 + 33 d^5 + 155 d^4 - 481 d^3 + 639 d^2 + 52 d - 240)}{24 d^4 - 36 d^3 + 120 d^2 - 12 d - 48} \\
	&\le 2^d d(d^2-1) - {\textstyle \frac12 d(d+1)}-1.
\end{align*}
The conclusion of Theorem~\ref{T1.3} now follows upon choosing $\rho = \frac12 d(d+1)+1$.

It thus remains to evaluate the sum over the singular integral and singular series. This task can be absolved swiftly by invoking Theorem~2.1 in \cite{FRF2} and imitating arguments from \cite[Section 8]{damaris}. For fixed $Y$ we set  $\psi_0 = (d^3(d+\frac32) - 1)^{-1}$ and $X_0 = Y^{1/\psi_0}$. Now assume that 
\begin{align}\label{6.13}
    n - \rho > 2^{d-1}d(d+1)(1+\psi_0^{-1}).
\end{align}
Then by \cite[Theorem~2.1]{FRF2} we have the alternative asymptotic formula
\begin{align*}
    N(X_0, Y) =(X_0Y)^{n-D}  \chi_{\infty}\prod_{p \text{ prime}} \chi_p  + O((X_0Y)^{n-D} Y^{-\nu}).
\end{align*}
On the other hand, one can check that the condition in \eqref{6.13} is stricter than the hypothesis of Theorem~\ref{T6.2}, so we may compare this bound with \eqref{6.10} and deduce that
\begin{align}\label{6.14}
    \sum_{\substack{\B y \in \cal U(\Z) \\ |\B y| \le Y\\F(\B y)=0}} \fr S_{\B y} \fr J_{\B y}= Y^{n-D} \chi_{\infty}\prod_{p \text{ prime}} \chi_p  + O(Y^{n-D-\nu}).
\end{align}
Note in particular that \eqref{6.14} does not depend on $X_0$ any longer. Thus, if \eqref{6.13} is satisfied, we are able to replace the sum over the singular series and integral in in Theorem~\ref{T1.3} by a product of local densities as in \eqref{6.14}. This establishes Theorem~\ref{T1.1} for all $\psi \le \psi_0$, while for $\psi_0 \le \psi \le 1$ the corresponding result follows from Theorem~2.1 in \cite{FRF2}. Finally, we recall that we need $\rho \ge \frac12d(d+1)+1$ and note that 
\begin{align*}
	{\textstyle2^{d-1}d^4(d+1)(d+ \frac32) + \frac12 d(d+1)+1 }\le 2^{d-1}d^4(d+1)(d+2)
\end{align*}  
for all admissible values $d$. This completes the proof of Theorem~\ref{T1.1}.

In order to complete the proof of our final result in Theorem~\ref{T1.4}, we note that in this case $\cal U = \cal V \setminus \{ \bm 0\}$. Thus, the error $E_{\cal U}(\psi) \ll X^{\psi(n-d)}$ is under control, and it remains to understand the error arising from any singular set $\cal B(\rho)$. From \eqref{3.9} we infer that 
$	E_{\cal B}(\psi) \ll X^{n-d} X^{\psi(n-\rho)}, 
$ 
which is acceptable within \eqref{6.8} if
$
	\rho > D +  d(d-1)/(2\psi).
$ 
Picking $\rho$ minimal in this way, we can now proceed precisely as in the proof of Theorem~\ref{T1.1}.


\begin{thebibliography}{999}
	\bibitem{birch} B. J. Birch,  \emph{Forms in many variables}, Proc. Roy. Soc. Ser. A \textbf{265} (1962), 245--263.
	
	\bibitem{FRF} J. Brandes, \emph{Forms representing forms and linear spaces on hypersurfaces},  Proc. London Math. Soc. \textbf{108} (2014), 809--835.
	
	\bibitem{FRF2} J. Brandes, \emph{Forms representing forms: The definite case}, J. London Math. Soc. \textbf{92} (2015), 393--410.
	
	\bibitem{FRF3} J. Brandes, \emph{Linear spaces on hypersurfaces over number fields}, Michigan Math. J. \textbf{66} (2017), 769--784.
	
	\bibitem{FRF4} J. Brandes, \emph{On the number of linear spaces on hypersurfaces with a prescribed discriminant}, Math. Z. \textbf{289} (2018), no. 3-4, 803--827.
	
	\bibitem{JB-RD} J. Brandes and R. Dietmann, \emph{Rational lines on cubic hypersurfaces}, Math. Proc. Camb. Philos. Soc., to appear.
	
	\bibitem{browning} T. D. Browning, \emph{Quantitative Arithmetic of Projective Varieties}. Progress in Mathematics, 277. Birkh\"auser Verlag, Basel, 2009.
	
	\bibitem{bhb} T. D. Browning and D. H. Heath-Brown, \emph{Forms in many variables and differing degrees}, J. Eur. Math. Soc. \textbf{9} (2017), 357--394.
	
	\bibitem{dav32} H. Davenport, \emph{Cubic forms in thirty-two variables}, Phil. Trans. Royal Soc. Ser. A \textbf{251} (1959), 193--232.
	
	\bibitem{dav} H. Davenport, \emph{Analytic methods for Diophantine equations and Diophantine inequalities}. Second edition, Cambridge University Press, Cambridge, 2005.
	
	
	
	\bibitem{HB02} D. R. Heath-Brown, \emph{Rational points on curves and surfaces}, Annals of Math. \textbf{155} (2002), no. 2, 553--598.
	
	\bibitem{HB10} D. R. Heath-Brown, \emph{Cubic forms in ten variables}, Proc. London Math. Soc. \textbf{47} (1983), 225--257.
	
	\bibitem{langer} A. Langer, \emph{Fano schemes of linear spaces on hypersurfaces}, Manuscripta Math. \textbf{93} (1997), 21--28.
	
	
	\bibitem{damaris} D. Schindler, \emph{Manin's Conjecture for certain biprojective hypersurfaces}, J. reine angew. Math. \textbf{714} (2016), 209--250
	
	\bibitem{SS} D. Schindler and E. Sofos, \emph{Sarnak's saturation problem for complete intersections},
	Mathematika \textbf{65} (2019), 1-56.
    
    \bibitem{wms82-quad} W. M. Schmidt, \emph{Simultaneous rational zeros of quadratic forms}. Seminar on Number Theory, Paris 1980-81 (Paris, 1980/1981), pp. 281--307, Progr. Math., \textbf{22}, Birkh\"auser, Boston, Mass., 1982.
	
    
	
\end{thebibliography}
\end{document}